\newtheorem{thm}{Theorem}[section]
\newtheorem{lemma}{Lemma}[section]
\newtheorem{prop}{Proposition}[section]
\newtheorem{cor}{Corollary}[section]
\newtheorem{ques}{Question}[section]
\theoremstyle{remark}
\newtheorem{remark}{Remark}[section]
\newcommand{\Ric}{\mbox{Ric}}
\newcommand{\R}{\mathbb R}
\numberwithin{equation}{section}
\newcommand{\be}{\begin{equation}}
\newcommand{\ee}{\end{equation}}
\def\p{\partial}
\def\la{\langle}
\def\ra{\rangle}
\def\lf{\left}
\def\ri{\right}
\def\Pi{\displaystyle{\mathbb{II}}}
\def\Ric{\text{\rm Ric}}
\def\e{\epsilon}
\def\vh{\vspace{.2cm}}
\def\mS{\mathbb{S}}
\def\bee{\begin{equation*}}
\def\eee{\end{equation*}}
\def\dels{\Delta_{_\Sigma}}
\def\nabs{\nabla_{_\Sigma} }
\def\lam{\lambda}
\def\Sel{\Sigma_{\eta, \lambda}}
\def\H{\mathbb{H}}
\def\mS{\mathbb{S}}
\def\vecHm{\vec{H}_{\mathbb{M}}}
\def\Sigmao{\Sigma_0} 
\def\mHs{\vec{H}_{\mS}}
\def\Pis{\Pi_{\mS}}
\def\onab{\overline{\nabla}}
\begin{document}

\title[Boundary effect of Ricci curvature]{Boundary effect of Ricci curvature}

\author{Pengzi Miao$^*$}
\address[Pengzi Miao]{Department of Mathematics, University of Miami, Coral Gables, FL 33146, USA.}
\email{pengzim@math.miami.edu}

\author{Xiaodong Wang}
\address[Xiaodong Wang]{Department of Mathematics, Michigan State University, East Lansing, MI 48864, USA.}
\email{xwang@math.msu.edu}

\thanks{$^*$Research partially supported by Simons Foundation Collaboration Grant for Mathematicians \#281105.}

\renewcommand{\subjclassname}{
  \textup{2010} Mathematics Subject Classification}
\subjclass[2010]{Primary 53C20; Secondary 53C24}
\date{}

\begin{abstract}
On a compact Riemannian manifold with boundary,  we study how Ricci curvature of the interior 
affects the geometry of the boundary. 
First we establish    integral  inequalities  for functions 
defined solely on the boundary and  apply them to obtain geometric inequalities 
involving  the total mean curvature. 
Then we  discuss  related rigidity questions and prove 
Ricci curvature rigidity results for manifolds with boundary.

\end{abstract}

\maketitle

\section{Introduction and statement of results}

In this paper, we consider the question  how the Ricci curvature of a compact manifold with boundary  affects 
the  boundary  geometry of the manifold. For the scalar curvature the same question is 
related to the quasi-local mass problem in general relativity. Indeed, much of the formulation of  the results 
in this paper is motivated by that  in \cite{ShiTam02, WangYau07, MiaoTam13}. 

We begin with  integral inequalities that hold for  functions solely defined on the boundary.
For simplicity, all manifolds and functions in this paper are assumed to be smooth. 

\begin{thm}  \label{thm-poincare-type-ineq-intro}
Let $(\Omega, g)$ be an $n$-dimensional, compact  Riemannian manifold  
   with nonempty  boundary $\Sigma$. Let $K $ be a constant that is 
   a lower bound of the Ricci curvature of $g$, i.e. 
   $ \Ric \ge K g . $
   Let $H$ be   the mean curvature of  $\Sigma$  in $(\Omega, g)$ with respect to the outward normal. 
   Suppose $H>0$. 
Given any function $\eta$ on $\Sigma$, define 
\bee
A(\eta) = \int_\Sigma \frac{\eta^2}{H} d \sigma , \ \ 
B(\eta) = \int_\Sigma \frac{ \eta \dels \eta }{H} d \sigma ,
\eee
\bee
C(\eta) = \int_\Sigma \lf[ \frac{ (\dels \eta)^2}{H} - \Pi (\nabs \eta, \nabs \eta) \ri] d \sigma ,
\eee
where $ \nabs$,  $\dels$ are the gradient,  the Laplacian on $\Sigma$ respectively,
$\Pi$ is the second fundamental form of $\Sigma$ and $d \sigma$ is the volume form on $\Sigma$. 
Then,  for each nontrivial $\eta$, either 
\be \label{eq-intro-case-1}
\lf(\frac{B(\eta)}{A(\eta)} \ri)^2 \le \frac{C(\eta)}{A(\eta) } 
\ee
or 
\be \label{eq-intro-case-2}
\frac12 K  \le -  \frac{  B(\eta)}{A(\eta)}  - \sqrt{ \lf(\frac{B(\eta)}{A(\eta)} \ri)^2 - \frac{C(\eta)}{A(\eta) }  }  . 
\ee
\end{thm}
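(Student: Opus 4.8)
\medskip
\noindent\textbf{Proof strategy.}
The plan is to extend $\eta$ harmonically into $\Omega$ and feed the result into Reilly's formula, with one essential twist: since $\Ric\ge Kg$ forces $\Ric\ge K'g$ for \emph{every} constant $K'\le K$, I will produce not a single inequality but a whole family of inequalities indexed by $K'\in(-\infty,K]$, and the stated dichotomy will be read off from that family.

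Concretely, given a nontrivial $\eta$ on $\Sigma$, let $u$ solve the Dirichlet problem $\Delta u=0$ in $\Omega$, $u=\eta$ on $\Sigma$, and let $\nu$ be the outward unit normal with $u_\nu:=\partial u/\partial\nu$. Reilly's formula reads
\[
\int_\Omega\big((\Delta u)^2-|\nabla^2 u|^2\big)\,dv=\int_\Omega\Ric(\nabla u,\nabla u)\,dv+\int_\Sigma\big(H u_\nu^2+2u_\nu\dels\eta+\Pi(\nabs\eta,\nabs\eta)\big)\,d\sigma .
\]
Because $\Delta u=0$ the left side vanishes, and $\int_\Omega|\nabla u|^2\,dv=\int_\Sigma\eta u_\nu\,d\sigma$ by integration by parts. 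Hence, fixing any $K'\le K$ and using $\Ric(\nabla u,\nabla u)\ge K'|\nabla u|^2$ together with $\int_\Omega|\nabla^2 u|^2\,dv\ge 0$, I obtain
\[
\int_\Sigma\big(H u_\nu^2+(2\dels\eta+K'\eta)u_\nu+\Pi(\nabs\eta,\nabs\eta)\big)\,d\sigma\le 0 .
\]
Since $H>0$ I complete the square in $u_\nu$ pointwise on $\Sigma$ and discard the nonnegative integral of the complete square, which leaves $\int_\Sigma\frac{(2\dels\eta+K'\eta)^2}{4H}\,d\sigma\ge\int_\Sigma\Pi(\nabs\eta,\nabs\eta)\,d\sigma$. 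Expanding the square and recalling the definitions of $A(\eta),B(\eta),C(\eta)$ — in particular $\int_\Sigma(\dels\eta)^2/H\,d\sigma=C(\eta)+\int_\Sigma\Pi(\nabs\eta,\nabs\eta)\,d\sigma$ — the $\int_\Sigma\Pi(\nabs\eta,\nabs\eta)$ terms cancel and what survives is exactly
\[
\tfrac14 (K')^2 A(\eta)+K'B(\eta)+C(\eta)\ge 0\qquad\text{for every }K'\le K .
\]

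To conclude, note that $A(\eta)>0$ because $H>0$ on the compact $\Sigma$ and $\eta\not\equiv 0$; thus $F(x):=\tfrac14 A(\eta)x^2+B(\eta)x+C(\eta)$ is an upward‑opening parabola that is nonnegative on the whole ray $(-\infty,K]$. If its discriminant $B(\eta)^2-A(\eta)C(\eta)$ is $\le 0$, dividing by $A(\eta)^2>0$ gives $\big(B(\eta)/A(\eta)\big)^2\le C(\eta)/A(\eta)$, i.e.\ \eqref{eq-intro-case-1}. Otherwise $F$ has two distinct real roots and is negative precisely on the open interval between them; since the connected, downward‑unbounded ray $(-\infty,K]$ must avoid that interval, $K$ is forced to lie at or below the smaller root, which a direct computation identifies as $-2B(\eta)/A(\eta)-2\sqrt{\big(B(\eta)/A(\eta)\big)^2-C(\eta)/A(\eta)}$, so that $K$ being at or below it is exactly \eqref{eq-intro-case-2}.

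The one genuinely nonroutine point is this last step: with only $F(K)\ge 0$ in hand one can conclude merely that $K$ lies at or below the smaller root \emph{or} at or above the larger one, with no way to exclude the second alternative — so it is crucial to have $F(K')\ge 0$ for \emph{all} $K'\le K$, which is what pins $K$ down to the smaller root. Everything else is a standard Reilly‑formula manipulation; the only things requiring care are fixing the sign and normalization conventions in Reilly's formula so that the $\Pi$‑terms cancel, and recording the strict positivity $A(\eta)>0$ needed to form the quotients $B(\eta)/A(\eta)$ and $C(\eta)/A(\eta)$.
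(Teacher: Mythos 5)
Your proof is correct, and it reaches the same intermediate inequality as the paper — namely $\int_\Sigma \Pi(\nabs\eta,\nabs\eta)\,d\sigma \le \int_\Sigma \frac{1}{H}(\dels\eta + t\eta)^2\,d\sigma$ for every $t\le \tfrac12 K$ — but by a genuinely different route. The paper (Theorem \ref{thm-poincare-type-ineq}) generates this one-parameter family by solving the $\lambda$-parametrized Dirichlet problems $\Delta u + \lambda u = 0$, $u|_\Sigma=\eta$, for all $\lambda \le nk$, applying the Ricci bound $\Ric \ge (n-1)k\,g$ at full strength, and splitting the Hessian into its trace and trace-free parts so that what is dropped is $\big(1-\tfrac1n\big)(nk-\lambda)\int|\nabla u|^2 + \int|\nabla^2 u + \tfrac{\lambda}{n}u\,g|^2$. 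You instead fix the single harmonic extension $\Delta u = 0$ and produce the family by the observation that $\Ric\ge K'g$ for every $K'\le K$, discarding the entire $\int|\nabla^2 u|^2$ term. Your version is leaner: it avoids the solvability question for $\lambda>0$ (the paper invokes Reilly's eigenvalue estimate to ensure $\lambda_1 > nk$ unless $\Omega$ is a hemisphere), and your observation that a single $F(K)\ge 0$ is insufficient — one needs the whole ray $F(K')\ge 0$, $K'\le K$, to pin $K$ below the smaller root — is exactly the right point and is handled cleanly. What the paper's heavier bookkeeping buys is the rigidity characterization of equality (tracking when the discarded terms vanish) and the extension to $H\ge 0$ via an $\epsilon$-regularization, neither of which is claimed in Theorem \ref{thm-poincare-type-ineq-intro}, so for this statement your argument is complete and, if anything, more economical.
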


\vspace{0.1cm}

\begin{remark}
If the term $\Pi (\nabs \eta , \nabs \eta)$ were absent in $C(\eta)$, 
then \eqref{eq-intro-case-1} would always hold  
by H\"{o}lder inequality. 
\end{remark}

\begin{remark} 
When $\Omega $ is the closure of a bounded domain in $\R^3$, 
the functional $C(\eta) $, up to a constant multiple of $\frac{1}{8 \pi}$, is  the 2nd variation of the  
Wang-Yau quasi-local energy (\cite{WangYau-PRL, WangYau08}) at $\Sigma = \p \Omega$ 
in $ \R^{3,1}$,
where $ \R^{3,1}$ is the $4$-dimensional Minkowski spacetime. (See \cite{MiaoTam13, MiaoTamXie11} for details.) 
\end{remark}

\begin{remark} 
If $\Sigma$ has  a  component $\Sigma_0$ on which $\Pi > 0$, then  
\eqref{eq-intro-case-1} always fails for  an $\eta$ which  is  a non-constant 
eigenfunction on $\Sigma_0$ and zero elsewhere. 
In this case,     \eqref{eq-intro-case-2} yields estimates  on the first 
nonzero eigenvalue of $\Sigma_0$.
(See Corollary \ref{cor-eigenvalue-est} for details.) 
\end{remark}

The conclusion of   Theorem \ref{thm-poincare-type-ineq-intro} 
is easily  seen to be  equivalent to   a statement  
\be \label{eq-thm-bdry-ineq-intro}
 \int_\Sigma  \Pi (\nabs  \eta , \nabs  \eta ) d \sigma  
\le    \int_{ \Sigma  }
\frac{1}{H} \lf(  \dels \eta   +   t  \eta  \ri)^2 d \sigma  
\ee
for all constants $ t \le \frac12 K $.  
In Theorem \ref{thm-poincare-type-ineq} of Section \ref{sec-basic-ineq}, 
we prove a  more general  version of \eqref{eq-thm-bdry-ineq-intro} which  allows  $H \ge 0$.
Interpreted this way, Theorem \ref{thm-poincare-type-ineq-intro} and its generalization 
(Theorem \ref{thm-poincare-type-ineq})  have natural applications to  the total  mean curvature of the boundary. 

We first  state the case of nonnegative Ricci curvature. 

\begin{thm} \label{thm-immersion-0-intro}
Let $(\Omega, g)$ be an $n$-dimensional, compact  Riemannian manifold with nonnegative Ricci curvature,
  with connected  boundary $\Sigma$ which has nonnegative mean curvature $H$.
Let $X: \Sigma  \rightarrow \R^m $ be an isometric immersion of $\Sigma$ 
  into some Euclidean space $\R^m$   of dimension  $m\ge n$.  Then 
\begin{equation} 
\label{eq-immersion-0-ineq-intro} 
\int_{\Sigma} H \ d \sigma \le \int_{\Sigma'}  \frac{ | \vec{H}_0 |^2 }{H}   \ d {\sigma} ,
\end{equation}
where  $\vec{H}_0$ is  the mean curvature vector of the  immersion $X$, $ | \vec{H}_0 |$ is  
the length of $\vec{H}_0$, and 
$ \Sigma' = \{ x \in \Sigma \ | \ \vec{H}_0 (x) \neq 0 \}$.
 Moreover, if equality in (\ref{eq-immersion-0-ineq-intro}) 
holds, then 
\begin{itemize}
\item[a)]  $ H = | \vec{H}_0 |  $ identically  on $\Sigma$.
\item[b)]  $(\Omega, g)$ is flat and  $X(\Sigma)$  lies in an $n$-dimensional  plane in  $\R^m$.
\item[c)]   $(\Omega, g)$ is isometric to a domain in $\R^n$ if $X$ is an embedding. 
\end{itemize} 
\end{thm}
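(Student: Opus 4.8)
The plan is to deduce the integral inequality \eqref{eq-immersion-0-ineq-intro} from Theorem \ref{thm-poincare-type-ineq-intro} (equivalently from \eqref{eq-thm-bdry-ineq-intro}) by choosing the test functions $\eta$ to be the coordinate components of the isometric immersion $X$. Write $X = (X^1, \dots, X^m)$. Since $\Ric \ge 0$ we may take $K = 0$, and therefore \eqref{eq-thm-bdry-ineq-intro} holds with $t = 0$: for every function $\eta$ on $\Sigma$,
\be \label{eq-prop-key}
\int_\Sigma \Pi(\nabs \eta, \nabs \eta)\, d\sigma \le \int_\Sigma \frac{(\dels \eta)^2}{H}\, d\sigma .
\ee
(For this I would first need to confirm that Theorem \ref{thm-poincare-type-ineq-intro}, stated there for $H > 0$ and nontrivial $\eta$, is indeed equivalent to \eqref{eq-thm-bdry-ineq-intro} as asserted in the text, and that its generalization Theorem \ref{thm-poincare-type-ineq} permits $H \ge 0$; I am allowed to assume this.) Summing \eqref{eq-prop-key} over $\eta = X^\alpha$, $\alpha = 1, \dots, m$, the left side becomes $\int_\Sigma \sum_\alpha \Pi(\nabs X^\alpha, \nabs X^\alpha)\, d\sigma$ and the right side becomes $\int_\Sigma H^{-1} \sum_\alpha (\dels X^\alpha)^2\, d\sigma$ (interpreted as an integral over $\Sigma'$, since $\dels X^\alpha = 0$ a.e.\ on $\Sigma \setminus \Sigma'$). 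The main computational step is then the two standard identities from submanifold geometry: because $X$ is an isometric immersion, $\sum_\alpha \Pi_\Sigma(\nabs X^\alpha, \nabs X^\alpha) = H$ (the trace of the second fundamental form of $\Sigma \subset \Omega$ recovered through the flat coordinates — more precisely $\sum_\alpha \nabs X^\alpha \otimes \nabs X^\alpha$ is the identity endomorphism of $T\Sigma$, so its pairing with $\Pi$ gives $\tr_\Sigma \Pi = H$), while $\dels X^\alpha = \langle \vec{H}_0, e_\alpha \rangle$ is the $\alpha$-component of the mean curvature vector of $X$, whence $\sum_\alpha (\dels X^\alpha)^2 = |\vec H_0|^2$. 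Substituting these gives precisely \eqref{eq-immersion-0-ineq-intro}.

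For the rigidity statement, suppose equality holds. Then equality holds in the summed inequality, hence in \eqref{eq-prop-key} for each $\eta = X^\alpha$ (each term of the sum on the left is $\le$ the corresponding term on the right). Tracing back through the equivalence, equality in \eqref{eq-thm-bdry-ineq-intro} with $t = 0$ forces the inequality in Theorem \ref{thm-poincare-type-ineq-intro} to be an equality, which in turn should force equality in the underlying Reilly-type / Bochner argument used to prove Theorem \ref{thm-poincare-type-ineq} — I expect that proof to go through solving, for each $\alpha$, a Dirichlet problem $\Delta u_\alpha = 0$ (or $\Delta u_\alpha = c_\alpha$) on $\Omega$ with $u_\alpha|_\Sigma = X^\alpha$, and the Reilly formula then yields $\Hess\, u_\alpha = 0$ and $\Ric(\nabla u_\alpha, \nabla u_\alpha) = 0$ on $\Omega$, together with the boundary condition $\partial_\nu u_\alpha = H^{-1}(\dels X^\alpha + 0) = H^{-1}\langle \vec H_0, e_\alpha\rangle$. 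From $\Hess\, u_\alpha = 0$ for all $\alpha$ one gets that $u = (u_1, \dots, u_m): \Omega \to \R^m$ has parallel differential, so $du$ has constant rank; since $du|_{T\Sigma} = dX$ is an isometry onto an $n$-dimensional subspace and $\Sigma$ is a hypersurface, the rank is $n$ and $u$ is a local isometric immersion of $(\Omega, g)$ into an $n$-plane $P \subset \R^m$, forcing $g$ flat and $X(\Sigma) = u(\Sigma) \subset P$. This gives (a) (comparing normal derivatives: $H = \partial_\nu(\text{position}) = $ the full length $|\vec H_0|$ once $X(\Sigma)$ sits in an $n$-plane and $u$ is an isometric immersion into it) and (b). For (c), if $X$ is an embedding then $u: \Omega \to P \cong \R^n$ restricts to a diffeomorphism on the boundary; a degree/covering argument (the flat manifold $\Omega$ with boundary embedded via $u$, using that $u$ is a local isometry) shows $u$ is injective, so $(\Omega, g)$ is isometric to the domain in $\R^n$ bounded by $X(\Sigma)$.

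The main obstacle I anticipate is the rigidity analysis: extracting, from equality in the purely boundary inequality \eqref{eq-thm-bdry-ineq-intro}, the full interior conclusion $\Hess\, u_\alpha = 0$ requires re-examining the proof of Theorem \ref{thm-poincare-type-ineq} (not just its statement) to see exactly which Cauchy–Schwarz and Reilly-formula steps are saturated, and then assembling the $m$ harmonic extensions into a single map with parallel differential of the correct rank — in particular ruling out lower rank and showing the image lands in an affine $n$-plane rather than merely an affine subspace. The passage from "local isometric immersion" to the embedding statement (c) is a separate, more topological point: one must use that $u|_\Sigma$ is an embedding onto a hypersurface in $\R^n$ bounding a domain $U$, and argue that the local isometry $u: \Omega \to \R^n$ is a covering onto $\bar U$ and hence (as $\bar U$ is simply connected or by a straightforward degree count) a global isometry. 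Everything else — the choice of test functions, the submanifold identities $\sum_\alpha \nabs X^\alpha \otimes \nabs X^\alpha = \mathrm{Id}_{T\Sigma}$ and $\dels X^\alpha = \langle \vec H_0, e_\alpha\rangle$, and the summation — is routine.
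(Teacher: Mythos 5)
Your proposal follows the paper's proof almost step for step: the same choice $\eta=X^\alpha$, the same submanifold identities $\sum_\alpha \Pi(\nabs X^\alpha,\nabs X^\alpha)=H$ and $\sum_\alpha(\dels X^\alpha)^2=|\vec H_0|^2$, the same summation, and the same route through the Reilly rigidity ($\nabla^2 u_\alpha = 0$) to a parallel differential $d\Phi$, flatness, totally geodesic image, and a covering argument for (c). One slip to fix: you write that $du|_{T\Sigma}=dX$ is ``an isometry onto an $n$-dimensional subspace,'' but $T_p\Sigma$ is $(n-1)$-dimensional, so $dX$ only gives rank $n-1$; to upgrade to rank $n$ you must show $d\Phi(\nu)\notin dX(T_p\Sigma)$ at some point. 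The paper does this by choosing $y$ with $\vec H_0(y)\neq 0$ and invoking the boundary identity $\vec H_0 + H\,d\Phi(\nu)=0$ (a consequence of the Reilly equality), which forces $d\Phi(\nu)(y)$ to be nonzero and normal to $X(\Sigma)$; once the rank is $n$ at one point, parallelism gives it everywhere, and the same identity together with $|d\Phi(\nu)|\equiv 1$ yields $H=|\vec H_0|$ for part (a). You correctly flag this as the main thing to check, so the proposal is essentially right, just lacking that one explicit computation.
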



\begin{remark}
In light of the  Nash imbedding theorem \cite{Nash56},  the boundary $\Sigma$ 
always admits  an isometric immersion into some Euclidean space.
Therefore,  Theorem \ref{thm-immersion-0-intro} applies to
any compact Riemannian manifold with nonnegative Ricci curvature, with mean convex boundary
(i.e.  $H \ge 0$). 
One may compare Theorem \ref{thm-immersion-0-intro} with the result in \cite{ShiTam02}  in which 
a weaker curvature  condition $R \ge 0$ is assumed, where $R$ is the scalar curvature,  while a more stringent  
boundary condition is imposed.
 \end{remark}
 
\begin{remark}  \label{rmk-bdry-connected}
If $(\Omega, g)$ has  nonnegative Ricci curvature and  nonempty mean convex boundary,  
 it was shown  in \cite{Ichida81, Kasue83}  (also cf.  \cite{HangWang07}) that $ \p \Omega $ 
 has at most two components, and  $\p \Omega  $ 
 has  two components only if $(\Omega, g)$ is isometric to $N \times I$ for 
 a connected closed  manifold $N$ and an interval $I$.
This is  why  we  only consider connected boundary  in  Theorem \ref{thm-immersion-0-intro}.
\end{remark}

\begin{remark}
Theorem \ref{thm-immersion-0-intro} generalizes   \cite[Proposition 2]{HangWang07}, 
which proves  that  b) and  c) hold  under a pointwise  assumption  $ H \ge | \vec{H}_0 | $.
Indeed  the proof in \cite{HangWang07}  
can be easily adapted to prove  our Theorem \ref{thm-immersion-0-intro}. 
\end{remark}

\begin{remark}
 If $H> 0$ on $\Sigma$,   Theorem \ref{thm-immersion-0-intro} implies 
$ \int_\Sigma H d \sigma \le C  $
where  $C>0$ is a  constant depending only on the induced metric  on $\Sigma$
 and a positive lower bound  of $H$.  
\end{remark}

Next, we give an  analogous  result  for manifolds with  positive Ricci curvature.

\begin{thm} \label{thm-immersion-p0-intro}
Let $(\Omega, g)$ be an $n$-dimensional, compact  Riemannian manifold 
with positive Ricci curvature,
 with connected  boundary $\Sigma$ which has nonnegative mean curvature $H$.
Let  $k>0$ be a constant such that  
$$ \Ric \ge (n-1) k g . $$
Suppose  there exists an isometric immersion $X: \Sigma \rightarrow \mS^{m}_{k}  $,
 where   $\mS^m_k$ is the  sphere of  dimension $m \ge n$ with constant sectional curvature $k$. 
 Then 
\begin{equation} 
\label{eq-immersion-p0-ineq-intro} 
\int_{\Sigma} H \ d \sigma < \int_{\Sigma}  \frac{ | \vec{H}_{\mS} |^2  + \frac14  (n-1)^2 k   }{H}   \ d {\sigma} ,
\end{equation}
where  $\vec{H}_{\mS} $ is  the mean curvature vector of the immersion $X$.
 \end{thm}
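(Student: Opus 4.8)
The plan is to follow the strategy behind Theorem~\ref{thm-immersion-0-intro}: feed the coordinate functions of $X$ into the boundary inequality \eqref{eq-thm-bdry-ineq-intro} (or, if $H$ may vanish, its generalization Theorem~\ref{thm-poincare-type-ineq}), but now with the constant $t$ chosen as large as the Ricci bound allows. First I would realize $\mS^m_k$ as the round sphere of radius $1/\sqrt k$ centered at the origin in $\R^{m+1}$ and write $X=(X^1,\dots,X^{m+1})$ for the component functions of the composed isometric immersion $\Sigma\hookrightarrow\mS^m_k\subset\R^{m+1}$. The argument rests on three elementary identities: (i) $\sum_a (X^a)^2=1/k$; (ii) since $X$ is an isometric immersion, in a local $g$-orthonormal frame $\{e_i\}$ of $T\Sigma$ one has $\sum_a (e_iX^a)(e_jX^a)=\delta_{ij}$, whence $\sum_a \Pi(\nabs X^a,\nabs X^a)=\tr_\Sigma \Pi=H$; (iii) $\dels X^a=\big(\vec H_{\R^{m+1}}\big)^a$, where $\vec H_{\R^{m+1}}$ is the mean curvature vector of $\Sigma$ in $\R^{m+1}$, and by the Gauss formula for the chain $\Sigma\subset\mS^m_k\subset\R^{m+1}$ one has $\vec H_{\R^{m+1}}=\mHs-(n-1)k\,X$ with $\mHs\perp X$ in $\R^{m+1}$.

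Next I would take $t=\tfrac12(n-1)k$, the largest value admissible in \eqref{eq-thm-bdry-ineq-intro} under $\Ric\ge (n-1)kg$, and apply \eqref{eq-thm-bdry-ineq-intro} (resp.\ Theorem~\ref{thm-poincare-type-ineq}) with $\eta=X^a$ for each $a$, obtaining
\[
\int_\Sigma \Pi(\nabs X^a,\nabs X^a)\,d\sigma\ \le\ \int_\Sigma \frac1H\big(\dels X^a+tX^a\big)^2\,d\sigma .
\]
By (iii), $\dels X^a+tX^a=(\mHs)^a+\big(t-(n-1)k\big)X^a$, so by (i) and $\mHs\perp X$,
\[
\sum_a\big(\dels X^a+tX^a\big)^2=|\mHs|^2+\big(t-(n-1)k\big)^2\sum_a(X^a)^2=|\mHs|^2+\tfrac14(n-1)^2k ,
\]
the point being that $t=\tfrac12(n-1)k$ minimizes $\big(t-(n-1)k\big)^2$ over the admissible range, which is where the factor $\tfrac14$ comes from. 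Summing the displayed inequality over $a$ and using (ii) gives
\[
\int_\Sigma H\,d\sigma\ \le\ \int_\Sigma \frac{|\mHs|^2+\tfrac14(n-1)^2k}{H}\,d\sigma .
\]
If $H$ vanishes somewhere the right-hand side is $+\infty$, since its numerator is at least $\tfrac14(n-1)^2k>0$, and the estimate is trivial there; otherwise \eqref{eq-thm-bdry-ineq-intro} applies directly.

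It remains to make the inequality strict, and this is the delicate part. Arguing by contradiction, suppose equality holds; then (necessarily $H>0$, so all integrals are finite, and) since each summand satisfies the correct one-sided inequality, equality must hold in \eqref{eq-thm-bdry-ineq-intro} for every $\eta=X^a$ with $t=\tfrac12(n-1)k$. I would then trace through the equality case of Theorem~\ref{thm-poincare-type-ineq}: that inequality comes from the Reilly formula applied to the harmonic extension $u_a$ of $X^a$ into $\Omega$, after completing the square in the boundary term, and at the borderline value $t=\tfrac12 K$ equality forces $\Hess u_a\equiv 0$ on $\Omega$. Hence $\nabla u_a$ is parallel, so $\Ric(\nabla u_a,\nabla u_a)\equiv 0$; since $\Ric\ge(n-1)kg$ with $k>0$, this forces $\nabla u_a\equiv 0$, i.e.\ $X^a$ is constant on $\Sigma$. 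As this holds for every $a$, the immersion $X$ would be a constant map, which is impossible since $\Sigma$ is positive-dimensional.

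The main obstacle is securing this last step, i.e.\ the sharp equality characterization of Theorem~\ref{thm-poincare-type-ineq}: that equality in \eqref{eq-thm-bdry-ineq-intro} at $t=\tfrac12 K$ propagates back to $\Hess u_a\equiv 0$ (not merely to a relation on $\Ric(\nabla u_a,\nabla u_a)$ and the normal derivative $\partial u_a/\partial\nu$). Once that is in place, the non-existence of nontrivial parallel vector fields under a strictly positive Ricci lower bound closes the proof, the remaining work being only the routine bookkeeping of the identities (i)--(iii).
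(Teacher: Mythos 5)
Your computational setup reproduces the paper's proof of Theorem \ref{thm-immersion-p0} almost exactly: realize $\mS^m_k$ as the round sphere of radius $1/\sqrt k$ in $\R^{m+1}$, use $\dels X^a=(\vec H_0)^a$, $\vec H_0=\vec H_{\mS}-(n-1)k\,X$ (equivalently $\langle\vec H_0,X\rangle=-(n-1)$), $\sum_a\Pi(\nabs X^a,\nabs X^a)=H$, and apply the boundary inequality with $\eta=X^a$ and $t=\frac12(n-1)k$, then sum. The identities (i)--(iii) and the pointwise computation yielding $|\vec H_{\mS}|^2+\frac14(n-1)^2k$ all check out.

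The strictness argument, however, is wrong in its mechanism, and unnecessarily so. You claim that, at the borderline $t=\frac12 K$, equality in Theorem~\ref{thm-poincare-type-ineq} would force $\Hess u_a\equiv0$ for the harmonic extension $u_a$ of $X^a$. Two problems. First, for $t\neq 0$ the extension in the proof of Theorem~\ref{thm-poincare-type-ineq} is not harmonic: $u$ solves $\Delta u+\lambda u=0$ with $\lambda=\frac{2n}{n-1}t$, so at $t=\frac12(n-1)k$ one has $\lambda=nk$. Second, equality in that theorem does not yield $\nabla^2u\equiv0$ in general; the rigidity statement is that equality holds only if either $k>0$, $t=0$, and $\eta$ is constant, or $k=t=0$ and $\nabla^2 u\equiv 0$. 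The scenario you describe ($\Hess u_a\equiv 0$, then $\nabla u_a$ parallel, then $\Ric(\nabla u_a,\nabla u_a)=0$, then $\nabla u_a=0$) corresponds to the second branch, which requires $k=0$, whereas here $k>0$. The correct (and shorter) way to close the proof is to observe that since $k>0$ and $t=\frac12(n-1)k>0\neq 0$, neither branch of the rigidity alternative can occur, so each inequality for $\eta=X^a$ is already strict, and summing preserves strictness. Equivalently, tracing through the paper's proof of the equality case with $\lambda=nk$, one finds $\nabla^2u+kug=0$ together with a boundary comparison that forces $k=0$, again a contradiction. So your route can be repaired, but as written the claim about $\Hess u_a\equiv 0$ is false.
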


 Like \eqref{eq-immersion-0-ineq-intro},   \eqref{eq-immersion-p0-ineq-intro} 
imposes  constraints  on  the  boundary mean curvature 
when the Ricci curvature of the interior   has a positive lower bound.
For instance,  consider the standard hemisphere $(\mS^n_+, g_{_\mS})$ of dimension $n$.
Let   $ \Omega \subset \mS^n_+ $ be a smooth domain with connected  boundary. 
It follows from Theorem  \ref{thm-immersion-p0-intro} that 
there does {\em not} exist a metric $g$ on $\Omega$  satisfying  $\Ric \ge (n-1)$,
$g |_{T \p \Omega} =  g_{_\mS} |_{T \p \Omega} $  and $H \ge \sqrt{ ({H}_{\mS})^2 + \frac14 (n-1)^2 }$,
where $H$ and ${H}_{\mS}$ are  the mean curvature    of $\p \Omega$ in
$(\Omega, g)$ and $(\Omega, g_{_{\mS}})$ respectively. 
This could  be compared  with the first step,  i.e. \cite[Theorem 4]{BMN11}, in 
the construction of  the counterexample to Min-Oo's Conjecture,
 in which a metric on $\mS^n_+$ is produced so that it satisfies 
 $R \ge n(n-1)$,   but   the mean curvature of 
 $ \p \mS^n_+$  is raised  to be everywhere positive.
One may also compare this with the Ricci curvature rigidity theorems 
in \cite{HangWang09}.

When a  manifold has negative Ricci curvature  somewhere, we have 

\begin{thm} \label{thm-immersion-n0-intro}
Let $(\Omega, g)$ be an $n$-dimensional, compact  Riemannian manifold 
 with  boundary $\Sigma$ which has  nonnegative mean curvature $H$.
 Let   $k>0$ be  a constant satisfying 
$$ \Ric \ge - (n-1) k g . $$
 Suppose $\Sigma$ has a component $\Sigma_0$  which admits  an isometric immersion 
 $$X = (t, x_1, \ldots, x_n) : \Sigma_0  \longrightarrow \H^{m}_{-k}  \subset \R^{m,1}, $$
 where  $ \R^{m,1}$ is the $(m+1)$-dimensional Minkowski spacetime  with $m \ge n$ and 
 $$ \H^m_{-k}  = \lf\{ (y_0, y_1, \ldots, y_m  ) \subset \R^{m, 1}  \ | \ - y_0^2 + \sum_{i=1}^{m} y_i^2 = - \frac{1}{k}, \ y_0 > 0  \ri\} .$$ 
Then 
\be \label{eq-immersion-n0-ineq-intro}
\begin{split}
& \ 
  \int_{\Sigma_0} H  d \sigma +   \int_{\Sigma_0} \Pi (\nabs t , \nabs t) d \sigma  \\
 <     & \  \int_{\Sigma'_0}  \frac{1}{H}   \lf\{ 
    | \vec{H}_{\H} |^2  -  \frac14 (n-1)^2 k     +  \lf[  \dels t   -  \frac12 (n-1) k  t   \ri]^2 \ri\} 
 d \sigma  ,
 \end{split} 
\ee 
where   $\vec{H}_{\H} $ is  the mean curvature vector of the immersion $X$  into $\H^m_{-k}$,
\bee \label{eq-p-of-big-integrand-intro}
  |\vec{H}_{\H} |^2  -  \frac14 (n-1)^2 k     +  \lf[ \dels t   -  \frac12 (n-1) k  t  \ri]^2  \ge 0  \ \ \mathrm{on} \ \Sigma_0,
  \eee
and  $\Sigma'_0 $ is the set consisting of  $x \in \Sigma_0 $ such that 
$$  
|\vec{H}_{\H} |^2 (x) -  \frac14 (n-1)^2 k   
  +  \lf[ \dels t   -  \frac12 (n-1) k  t   \ri]^2 (x) > 0  . $$ 
 \end{thm}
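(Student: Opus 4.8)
The plan is to reduce Theorem~\ref{thm-immersion-n0-intro} to the boundary inequality \eqref{eq-thm-bdry-ineq-intro} (equivalently, the more general Theorem~\ref{thm-poincare-type-ineq} allowing $H \ge 0$) applied to a cleverly chosen test function $\eta$ built from the coordinate functions of the immersion $X$ into $\H^m_{-k}$. Specifically, I would apply the inequality on the component $\Sigma_0$ with the parameter choice $t = \frac12 (n-1) k$, which is permissible since $\Ric \ge -(n-1) k g$ gives $K = -(n-1)k$ and hence $\frac12 K = -\frac12(n-1)k \le \frac12 (n-1) k$. This produces
\be \label{eq-plan-start}
\int_{\Sigma_0} \Pi(\nabs \eta, \nabs \eta)\, d\sigma \le \int_{\Sigma_0} \frac{1}{H} \lf( \dels \eta + \tfrac12 (n-1) k\, \eta \ri)^2 d\sigma
\ee
for every function $\eta$ on $\Sigma_0$ (extended by zero or handled componentwise; here I only need $\Sigma_0$). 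The idea is then to sum \eqref{eq-plan-start} over $\eta = y_0 = t$ and $\eta = y_i = x_i$ for $i = 1, \ldots, m$, exploiting the algebraic identities satisfied by the position vector of a point on the hyperboloid $\H^m_{-k}$.

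The key computation is to identify both sides of the summed inequality with the geometric quantities in \eqref{eq-immersion-n0-ineq-intro}. On the left, $\sum_i \Pi(\nabs x_i, \nabs x_i) - \Pi(\nabs t, \nabs t)$: using that $X$ is an isometric immersion, $\sum_{\mu} \nabs y_\mu \otimes \nabs y_\mu$ (with the Minkowski sign on the $y_0$ term) recovers the induced metric on $\Sigma_0$, so $-\sum_\mu \la \nabla_\mu, \cdot\ra \Pi$ traces against $\Pi$ to give $\tr_\Sigma \Pi = H$; thus the left side becomes $\int_{\Sigma_0} H\, d\sigma + \int_{\Sigma_0}\Pi(\nabs t, \nabs t)\,d\sigma$ (the extra $+\Pi(\nabs t, \nabs t)$ coming from moving the $y_0$-term to the other side of the Minkowski trace, consistent with the left-hand side of \eqref{eq-immersion-n0-ineq-intro}). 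On the right, one uses the Gauss-type formula for the Laplacian of the coordinate functions of a submanifold of $\H^m_{-k}$: $\dels y_i = \la \vec{H}_\H, \cdot\ra + (\text{curvature term in } y_i)$, so that $\sum_i (\dels x_i + \tfrac12(n-1)k\, x_i)^2 - (\dels t + \tfrac12(n-1)k\, t)^2$ collapses, via $-y_0^2 + \sum y_i^2 = -1/k$ and $|\vec{H}_\H|^2$ being the Minkowski-norm-squared of the mean curvature vector in the ambient $\R^{m,1}$, to $|\vec{H}_\H|^2 - \frac14(n-1)^2 k + [\dels t - \frac12(n-1)k\,t]^2$. This is where I expect the bulk of the bookkeeping to live: getting every sign from the Lorentzian metric right and correctly handling the fact that $\vec H_\H$ (mean curvature in $\H^m_{-k}$) differs from the mean curvature vector in $\R^{m,1}$ by the $\H^m_{-k}$-normal term, which contributes the $-\frac14(n-1)^2 k$ constant after using the hyperboloid constraint.

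I would then address the pointwise nonnegativity assertion, namely that $|\vec H_\H|^2 - \frac14(n-1)^2 k + [\dels t - \frac12(n-1)k\,t]^2 \ge 0$ on $\Sigma_0$: this should follow from the same algebraic identity used in the right-hand side computation, since that expression is (up to the manipulations above) literally $\frac{1}{H}$ times a sum of squares before integrating, or more precisely it equals a Minkowski inner product that the hyperboloid geometry forces to be nonnegative — one compares $|\vec H_\H|^2$ against the square of its $y_0$-component. The set $\Sigma_0'$ where strict positivity holds is then exactly where the integrand is nonzero, so that $\frac1H$ times the integrand can be integrated over $\Sigma_0'$ without worrying about points where $H = 0$ (here the $H \ge 0$ version of the basic inequality is essential, mirroring how $\Sigma'$ is used in Theorem~\ref{thm-immersion-0-intro}).

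Finally, the inequality \eqref{eq-immersion-n0-ineq-intro} is \emph{strict}. This is the one point requiring a genuine (if short) additional argument rather than pure computation: equality in the summed version of \eqref{eq-plan-start} would force equality in \eqref{eq-plan-start} for \emph{every} coordinate function simultaneously, and—tracing through the equality case of Theorem~\ref{thm-poincare-type-ineq} (which I would expect to characterize equality via a rigidity statement à la Theorem~\ref{thm-immersion-0-intro}, forcing $g$ to be a specific model metric, here $\Ric = -(n-1)k g$ with $\Sigma_0$ totally geodesic or umbilic in a prescribed way)—one checks this is incompatible with $X$ mapping into $\H^m_{-k}$ as described, e.g. because the model would be the hyperbolic space form whose boundary geometry contradicts the strict inequality $\frac12 K < t$ being \emph{not} attained with the chosen $t = \frac12(n-1)k$ strictly greater than $\frac12 K = -\frac12(n-1)k$ when $k > 0$. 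Indeed, since we apply the basic inequality with $t = \frac12(n-1)k$ strictly larger than the threshold $\frac12 K = -\frac12(n-1)k$, there should be slack in the inequality \eqref{eq-thm-bdry-ineq-intro} as a function of $t$, and that slack is what upgrades $\le$ to $<$; making this quantitative (or extracting it from the proof of Theorem~\ref{thm-poincare-type-ineq}) is the cleanest route, and I would organize the final writeup around it.
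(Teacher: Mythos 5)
Your high-level plan is correct — test the geometric Poincar\'e inequality (Theorem~\ref{thm-poincare-type-ineq}) against the coordinate functions of the hyperboloid immersion and sum — and this is exactly what the paper does. But there are three concrete errors in the proposal, the first of which would make your argument fail outright.

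\textbf{Sign of $t$.} The constraint in Theorems~\ref{thm-poincare-type-ineq-intro} and~\ref{thm-poincare-type-ineq} is $t \le \frac12 K$ (equivalently $t \le \frac12(n-1)k_0$ where $\Ric \ge (n-1)k_0 g$), not $\frac12 K \le t$. Here $\Ric \ge -(n-1)kg$ means $K = -(n-1)k$, so the allowed range is $t \le -\frac12(n-1)k$; your choice $t = +\frac12(n-1)k$ is outside it, and the inequality \eqref{eq-thm-bdry-ineq} is simply not available for that parameter. The paper takes the extreme allowed value $t = -\frac12(n-1)k$, which yields the expression $\dels \eta - \frac12(n-1)k\eta$ matching the right-hand side of \eqref{eq-immersion-n0-ineq-intro}. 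With your sign the bookkeeping cannot close.

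\textbf{Which test functions to sum over.} You propose to apply \eqref{eq-plan-start} to $\eta = t = y_0$ as well as $\eta = x_i$, $i=1,\ldots,m$, and then form a Lorentzian difference on the left. But inequalities cannot be subtracted, and if you \emph{add} all $m+1$ of them the Lorentzian trace identity $\sum_{i=1}^m \Pi(\nabs x_i,\nabs x_i) - \Pi(\nabs t,\nabs t) = H$ gives a left-hand side of $H + 2\,\Pi(\nabs t,\nabs t)$, not $H + \Pi(\nabs t,\nabs t)$. The correct move (as in the paper, see \eqref{eq-H-sum-PIt}) is to apply the basic inequality \emph{only} to $\eta = x_1,\ldots,x_m$; the term $\Pi(\nabs t,\nabs t)$ already appears pointwise in $\sum_{i=1}^m \Pi(\nabs x_i,\nabs x_i) = H + \Pi(\nabs t,\nabs t)$ with no separate use of $\eta = t$. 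The nonnegativity \eqref{eq-p-of-big-integrand-intro} then drops out, as you anticipate, from the pointwise identity $\sum_{i=1}^m (\dels x_i - \frac12(n-1)k x_i)^2 = |\vec{H}_\H|^2 - \frac14(n-1)^2 k + (\dels t - \frac12(n-1)k t)^2$, whose left side is manifestly a sum of squares.

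\textbf{Strictness.} You attribute the strict inequality to ``slack'' from taking $t$ above the threshold, but no such slack exists — the inequality is false above the threshold. The strictness actually comes from the rigidity clause of Theorem~\ref{thm-poincare-type-ineq}: equality forces either ($k_0 > 0$, $t = 0$, $\eta$ constant) or ($k_0 = t = 0$). With the effective $k_0 = -k < 0$ and $t = -\frac12(n-1)k \neq 0$, neither alternative can occur, so equality is impossible for each $\eta = x_i$, and the summed inequality is strict. No quantitative estimate is needed; the characterization of equality alone delivers the strictness.
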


\begin{remark}
  The term $ \int_{\Sigma_0} \Pi (\nabs t, \nabs t) d \sigma $  in \eqref{eq-immersion-n0-ineq-intro} can be dropped 
 if  either $ \Pi \ge 0 $ or   $ X(\Sigma_0) \subset \H^m_{-k} \cap \{ t = t_0 \}$ for some constant $t_0$.
For instance,  
this is the case if  $\Sigma_0$ can be isometrically immersed in a sphere. 
\end{remark}

The fact that \eqref{eq-immersion-p0-ineq-intro} and \eqref{eq-immersion-n0-ineq-intro} are  strict inequalities
is   due to  the characterization of  equality case in Theorem \ref{thm-poincare-type-ineq}.
This leads naturally  to rigidity questions in the context of Theorem \ref{thm-immersion-p0-intro} and
  \ref{thm-immersion-n0-intro}.  We have the following two related  results.

\begin{thm} \label{thm-spherical-intro} 
Let $\left( \Omega ,g\right)  $  be an $n$-dimensional,  compact Riemannian manifold
with boundary $\Sigma $.  Suppose 
\begin{itemize}
\item $\Ric\geq\left(  n-1\right)  g$

\item there exists  an isometric immersion $X  :  \Sigma \rightarrow\mathbb{S}^{m}$, where $ \mS^m$ is a standard  sphere of 
dimension $m \ge n $

\item $\Pi\left(  v,v \right)  \geq\left\vert \Pi_\mS \left( v ,  v \right)  \right\vert $, for any $ v \in T\Sigma$.
Here $\Pi$ is the second fundamental form of $\Sigma$  in $(\Omega, g)$ and 
 $\Pi_\mS$ is the vector-valued, second fundamental form of  the immersion $X $. 
\end{itemize}
Then $\left(  \Omega ,g\right)  $ is spherical, i.e. having constant sectional curvature $1$.  
Moreover  if $\Sigma$ is simply connected, then $\left(  \Omega ,g\right)  $ is isometric to a domain in $\mathbb{S}_{+}^{n}$.
\end{thm}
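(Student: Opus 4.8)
The plan is to establish the first (local) conclusion by a Reilly-type integration-by-parts argument on $(\Omega,g)$, using the components of $X$ as boundary data and forcing every inequality that enters the argument to be an equality; the equality case then supplies enough solutions of the Obata equation $\Hess u=-u\,g$ on $\Omega$ to force constant sectional curvature $1$. The second conclusion I would treat separately, via the developing map. This parallels the proof of Theorem~\ref{thm-poincare-type-ineq} and the rigidity results of \cite{HangWang07, HangWang09}. For the setup: note first that $\Pi(v,v)\ge|\Pi_\mS(v,v)|\ge0$ forces $\Pi\ge0$, hence $H\ge0$, and, on taking traces, $H\ge|\mHs|$. Write $\phi_a=X_a$, $a=1,\dots,m+1$, for the components of $X:\Sigma\to\mS^m\subset\R^{m+1}$. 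On $\Sigma$ one has $\sum_a\phi_a^2\equiv1$, $\sum_a\nabs\phi_a\otimes\nabs\phi_a$ equal to the induced metric, $\dels\phi_a=-(n-1)\phi_a+(\mHs)_a$, and $\sum_a(\mHs)_a\phi_a=0$; hence $\sum_a\Pi(\nabs\phi_a,\nabs\phi_a)=H$ and $\sum_a\Pi_\mS(\nabs\phi_a,\nabs\phi_a)=\mHs$, and the Gauss equation of $\Sigma$ in $\mS^m$ gives the pointwise identity $\sum_a\big(\Hess_\Sigma\phi_a(v,v)\big)^2=|\Pi_\mS(v,v)|^2+\langle v,v\rangle^2$ for $v\in T\Sigma$. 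This last identity is the channel through which the full pointwise hypothesis on $\Pi$, not merely $H\ge|\mHs|$, is to be used.

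I would distinguish two cases according to whether $n$ is a Dirichlet eigenvalue of $-\Delta$ on $\Omega$. As $\Ric\ge(n-1)g$ and $H\ge0$, Reilly's estimate gives $\lambda_1(\Omega)\ge n$; if $\lambda_1(\Omega)=n$, the corresponding equality rigidity already identifies $(\Omega,g)$ with $\mS^n_+$. Otherwise $n$ is not an eigenvalue, so solve $\Delta u_a+nu_a=0$ in $\Omega$ with $u_a|_\Sigma=\phi_a$ for each $a$ and put $p_a=\partial_\nu u_a$. Apply Reilly's formula to each $u_a$, sum over $a$, bound the interior integrand by $|\Hess u_a|^2\ge\tfrac1n(\Delta u_a)^2$ and $\Ric(\nabla u_a,\nabla u_a)\ge(n-1)|\nabla u_a|^2$, and rewrite the boundary integrand with the identities above and the comparison $\Pi(\nabs\phi_a,\nabs\phi_a)\ge|\Pi_\mS(\nabs\phi_a,\nabs\phi_a)|$. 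This produces a relation on $\Sigma$ whose left-hand side is a sum of manifestly nonnegative terms — the two interior deficits, $\int_\Sigma H|p|^2$, and $\sum_a\int_\Sigma\big[\Pi(\nabs\phi_a,\nabs\phi_a)-|\Pi_\mS(\nabs\phi_a,\nabs\phi_a)|\big]$ — which, after using the algebraic relations for the $\phi_a$, is pitted against the boundary term $(n-1)\int_\Sigma\sum_a\phi_a\,\partial_\nu u_a$. The hard part — where the real work lies — is controlling this last term: the positive lower Ricci bound contributes an \emph{additional positive} interior quantity $\int_\Omega(\Delta u_a)^2$ (absent in the nonnegative-Ricci case treated in \cite{HangWang07}), and the crude argument only gives $\int_\Sigma\sum_a\phi_a\,\partial_\nu u_a\ge0$; obtaining the opposite inequality, or reorganising the argument (e.g.\ by a more careful choice of extension $u_a$ so that $\sum_a u_a^2$ is controlled, or by an auxiliary invocation of the pointwise $\Pi$-comparison through the Gauss identity above), so that all the nonnegative terms are forced to vanish, is the decisive step; the eigenvalue dichotomy is part of the same difficulty.

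Granting this, vanishing of the first interior deficit gives $\Hess u_a=-u_a\,g$ for every $a$; vanishing of the second gives $\Ric(\nabla u_a,\nabla u_a)=(n-1)|\nabla u_a|^2$; vanishing of the boundary deficit gives $\Pi(v,v)=|\Pi_\mS(v,v)|$ on $T\Sigma$; and equality in the remaining completion-of-square steps pins down the Neumann data $p_a$ in terms of $\phi_a$, $\dels\phi_a$ and $H$. From $\Hess u_a=-u_a\,g$ the quantity $\sum_a u_a^2+\sum_a|\nabla u_a|^2$ is constant on $\Omega$, and combining this with the now-explicit boundary data one checks $\sum_a u_a^2\equiv1$; so $u=(u_1,\dots,u_{m+1})$ is an isometric immersion of $\Omega$ into $\mS^m$ whose components all satisfy the Obata equation, $u(\Omega)$ is therefore totally geodesic in $\mS^m$, i.e.\ lies in a great subsphere $\mS^n$, and $(\Omega,g)$ has constant sectional curvature $1$. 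This proves the first assertion.

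For the second assertion, suppose $\Sigma$ is simply connected. Being of constant curvature $1$, $(\Omega,g)$ is locally isometric to $\mS^n$ and admits a developing map; using that $\Sigma$ is connected and simply connected one globalises it to an isometric immersion $\Omega\to\mS^n$, which by the convexity $\Pi\ge0$ of $\Sigma$ (with respect to the outward normal) is an embedding onto a geodesically convex domain $D$. Since $\partial\Omega\neq\emptyset$, $D$ is a proper convex subdomain of $\mS^n$ and hence is contained in an open hemisphere; thus $(\Omega,g)$ is isometric to a domain in $\mS^n_+$. The only delicate point here is globalising the developing map from simple connectedness of the boundary alone, for which one appeals to the standard structure theory of spherical manifolds with locally convex boundary.
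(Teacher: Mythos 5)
The proposal has a genuine gap, and it is not the one you identify. You flag the term $(n-1)\int_\Sigma\sum_a x_a\,\partial_\nu u_a$ as the obstacle, but in fact that term does not survive the bookkeeping: when you plug $\Delta u_a=-nu_a$ into Reilly's formula and use $\lvert\nabla^2 u_a\rvert^2\geq\frac1n(\Delta u_a)^2$, $\Ric(\nabla u_a,\nabla u_a)\geq(n-1)\lvert\nabla u_a\rvert^2$, and $\int_\Omega\lvert\nabla u_a\rvert^2=n\int_\Omega u_a^2+\int_\Sigma x_a\partial_\nu u_a$, the interior coefficients cancel ($n-n^2+n(n-1)=0$) and you are left, after completing the square in $\partial_\nu u_a$, with exactly inequality \eqref{eq-thm-bdry-ineq} at $t=\tfrac12(n-1)$, which summed over $a$ reproduces \eqref{eq-immersion-p0-ineq}. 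The real obstruction is that this chain of inequalities is \emph{never tight} when $\Ric\geq(n-1)g$: the equality analysis in Theorem~\ref{thm-poincare-type-ineq} shows that for $k>0$ and $t>0$ equality in \eqref{eq-thm-bdry-ineq} is impossible for any nontrivial $\eta$ (the interior Obata equation $\nabla^2u=-ug$ forces the Robin condition $\dels\eta+H\partial_\nu u+(n-1)\eta=0$, which is incompatible with the boundary square forcing $\dels\eta+H\partial_\nu u+\tfrac12(n-1)\eta=0$ unless $\eta\equiv0$). Indeed \eqref{eq-immersion-p0-ineq} is stated as a strict inequality for this very reason, and with $H=\lvert\mHs\rvert$ it reduces to the tautology $0<\tfrac14(n-1)^2\int_\Sigma 1/\lvert\mHs\rvert$, which contains no rigidity information at all. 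A second structural shortcoming is that after summing over $a$ the pointwise hypothesis $\Pi(v,v)\geq\lvert\Pi_\mS(v,v)\rvert$ is only seen through its trace $H\geq\lvert\mHs\rvert$; the Gauss-type identity you record for $\sum_a(\nabla^2_\Sigma\phi_a(v,v))^2$ is correct, but it never enters the summed Reilly formula, so the full second-fundamental-form comparison is not actually used.

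The paper's argument avoids both problems by abandoning the integral Reilly formula entirely. Fix $q\in\Sigma$ and take the \emph{single} linear function $F=\langle X(q),\cdot\rangle$ on $\mS^m$, set $f=F\circ X$, solve $\Delta u+nu=0$ with $u|_\Sigma=f$ (after disposing of the case $\lambda_1=n$ by Reilly's eigenvalue theorem), and study $\phi=\lvert\nabla u\rvert^2+u^2$. By the Bochner formula $\phi$ is subharmonic, so it attains its maximum on $\Sigma$; and at a boundary maximum $p$ one computes $\frac12\partial_\nu\phi(p)$ explicitly (Lemma~\ref{lem-phi-nu}) and shows it is $\leq0$ by invoking the \emph{pointwise} comparison $\Pi(\nabs f,\nabs f)\geq\lvert\Pi_\mS(\nabs f,\nabs f)\rvert$ together with $H\geq\lvert\mHs\rvert$, after using \eqref{eq-id-3} to replace $\langle\nabs f,\nabs\chi\rangle$ by a $\Pi_\mS$ term. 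The Hopf boundary point lemma then forces $\phi$ constant, giving $\nabla^2u=-ug$; together with $u(q)=1$, $\nabla u(q)=0$ (since $\overline\nabla F(X(q))=0$) this yields, via the warped-product form of the metric along geodesics emanating from the nearest boundary point, constant sectional curvature $1$ at any prescribed interior point. So the decisive mechanism is a Hopf-lemma argument where the pointwise $\Pi$-hypothesis is applied at the boundary maximum, and it cannot be replaced by the summed Reilly identity.

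Your outline of the second assertion is also not what the paper does, and the gap you acknowledge (globalising the developing map from simple connectedness of $\Sigma$ alone) is genuine: $\Sigma$ simply connected does not by itself make $\Omega$ simply connected, so one cannot directly produce a global developing map of $\Omega$. The paper instead proceeds intrinsically on $\Sigma$: once $(\Omega,g)$ is spherical, the Gauss and Codazzi equations of $\Sigma\subset\Omega$ and the fundamental theorem of hypersurfaces produce an isometric immersion $\Phi:\Sigma\to\mS^n$ with the same second fundamental form $\Pi\geq0$; by Do Carmo--Warner this is an embedding onto a convex hypersurface in a hemisphere bounding a domain $D$; gluing $\Omega$ to $\mS^n\setminus D$ along $\Phi$ gives a closed spherical space form of volume $>\tfrac12\,\mathrm{Vol}(\mS^n)$, which must be $\mS^n$. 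That is the route to the second conclusion.
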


\begin{thm} \label{thm-hyperbolic-intro}
Let $\left(  \Omega, g\right)  $ be an $n$-dimensional,  compact Riemannian manifold with
boundary $\Sigma$. Suppose

\begin{itemize}
\item $\Ric\geq-\left(  n-1\right)  g$

\item there exists an isometric immersion $X:   \Sigma  \rightarrow\mathbb{H}^{m}$, 
where $\mathbb{H}^{m}$ is a hyperbolic space of dimension $m\geq n$

\item $ \Pi\left(  v,v\right)  \geq \left\vert \Pi_{\mathbb{H}}\left(v,v\right)  \right\vert $, for any $v\in T\Sigma$. 
Here $\Pi$ is the second fundamental form of $\Sigma$  in $(\Omega, g)$ and 
$\Pi_{\mathbb{H}}$ is the vector-valued, second fundamental form of  the immersion $X $. 
\end{itemize}
Then $\left( \Omega,g\right)  $ is hyperbolic, i.e. having constant sectional curvature $-1$. 
Moreover if $\Sigma$ is simply  connected, then $\left(  \Omega ,g\right)  $ isometric to a domin in $\mathbb{H}^{n}$.
\end{thm}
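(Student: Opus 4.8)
The plan is to combine the Bochner--Reilly identity underlying Theorem \ref{thm-poincare-type-ineq} with the coordinate functions of the hyperboloid model of $\mathbb{H}^m$ and with the pointwise hypothesis on the second fundamental forms. First I would record the elementary reduction: tracing $\Pi(v,v)\ge|\Pi_{\H}(v,v)|$ over an orthonormal frame of $T\Sigma$ gives $H\ge|\vec{H}_{\H}|\ge0$, so $\Sigma$ is convex in $(\Omega,g)$ and has nonnegative mean curvature, and Theorem \ref{thm-poincare-type-ineq} is available. Realize $\mathbb{H}^m=\{\,Y\in\mathbb{R}^{m,1}:\langle Y,Y\rangle=-1,\ y_0>0\,\}$, write $X=(x_0,x_1,\dots,x_m):\Sigma\to\mathbb{H}^m\subset\mathbb{R}^{m,1}$ for the given isometric immersion, and put $t:=\tfrac12 K=-\tfrac12(n-1)$ (the threshold in Theorem \ref{thm-poincare-type-ineq}, here with $K=-(n-1)$).

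Next I would assemble the relevant identities. Since $X$ is isometric and $\mathbb{H}^m\hookrightarrow\mathbb{R}^{m,1}$ is isometric, $\sum_{i\ge1}\nabs x_i\otimes\nabs x_i-\nabs x_0\otimes\nabs x_0=g|_{T\Sigma}$, whence $\sum_{i\ge1}\Pi(\nabs x_i,\nabs x_i)-\Pi(\nabs x_0,\nabs x_0)=H$; and from $\dels X=\vec{H}_{\H}+(n-1)X$ (with $\vec{H}_{\H}$ the mean curvature vector of $X$) together with $\langle X,X\rangle=-1$ one gets $\sum_{i\ge1}(\dels x_i+t x_i)^2-(\dels x_0+t x_0)^2=|\vec{H}_{\H}|^2-\tfrac14(n-1)^2$. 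For each $\alpha=0,\dots,m$ I would then solve the ``space--form'' Dirichlet problem $\Delta u^{(\alpha)}=n\,u^{(\alpha)}$ on $\Omega$ with $u^{(\alpha)}|_\Sigma=x_\alpha$ (uniquely solvable, with $\Delta$ normalized so that the linear coordinates $y_\alpha$ on $\mathbb{H}^n$ obey $\Delta y_\alpha=n y_\alpha$ and $\Hess y_\alpha=y_\alpha g$), and apply Reilly's formula to each $u^{(\alpha)}$. Using $|\Hess u^{(\alpha)}-u^{(\alpha)}g|^2\ge0$ and $\Ric\ge-(n-1)g$, this yields
\be \label{eq-reilly-alpha-plan}
\begin{split}
& \int_\Omega \lf[\, |\Hess u^{(\alpha)}-u^{(\alpha)}g|^2 + \Ric(\nabla u^{(\alpha)},\nabla u^{(\alpha)}) + (n-1)|\nabla u^{(\alpha)}|^2 \,\ri] dV \\
& \quad {}+ \int_\Sigma H\lf(\partial_\nu u^{(\alpha)}+\frac{\dels x_\alpha+t x_\alpha}{H}\ri)^{\!2} d\sigma \\
& \qquad = \int_\Sigma \frac{(\dels x_\alpha+t x_\alpha)^2}{H}\,d\sigma \;-\; \int_\Sigma \Pi(\nabs x_\alpha,\nabs x_\alpha)\,d\sigma,
\end{split}
\ee
whose right side is consequently $\ge0$ --- exactly \eqref{eq-thm-bdry-ineq-intro} at the threshold $t=\tfrac12 K$ for $\eta=x_\alpha$.

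The crux is to make the pointwise hypothesis force every bulk integrand in \eqref{eq-reilly-alpha-plan} to vanish. Since $x_\alpha$ is affine on $\mathbb{R}^{m,1}$, $\Hess_\Sigma x_\alpha$ is determined by $\Pi_{\H}$ and the position vector $X$; combining this with the two identities above and the Minkowski relations among the $x_\alpha$, the hypothesis $\Pi(v,v)\ge|\Pi_{\H}(v,v)|$ --- applied in particular in the \emph{positive--definite} normal bundle of $\Sigma$ inside $\mathbb{H}^m$ --- controls all the boundary quantities entering \eqref{eq-reilly-alpha-plan}. Summing \eqref{eq-reilly-alpha-plan} over $\alpha=1,\dots,m$, subtracting the instance $\alpha=0$, and collapsing the right--hand side through the identities, I expect to conclude that $\int_\Omega|\Hess u^{(\alpha)}-u^{(\alpha)}g|^2=0$ and $\Ric(\nabla u^{(\alpha)},\nabla u^{(\alpha)})\equiv-(n-1)|\nabla u^{(\alpha)}|^2$ for every $\alpha$. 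Then $\Hess u^{(\alpha)}=u^{(\alpha)}g$ on $\Omega$, and commuting covariant derivatives in this identity gives $R(W,Z)\nabla u^{(\alpha)}=\langle W,\nabla u^{(\alpha)}\rangle Z-\langle Z,\nabla u^{(\alpha)}\rangle W$; since $X$ is an immersion the vectors $\nabla u^{(\alpha)}$ span $T\Omega$ near $\Sigma$, so the curvature operator of $(\Omega,g)$ is that of constant sectional curvature $-1$ there, hence everywhere. This proves $(\Omega,g)$ is hyperbolic. For the final assertion, a constant--curvature metric gives a developing local isometry $D:\widetilde{\Omega}\to\mathbb{H}^n$; since $\Sigma$ is a hypersurface in the now constant--curvature $\Omega$, the Gauss--Codazzi equations let $X$ factor (up to an isometry of $\mathbb{H}^m$) through a totally geodesic $\mathbb{H}^n\subset\mathbb{H}^m$, so $X$ realizes $\Sigma$ as a convex hypersurface of $\mathbb{H}^n$, and since $\Sigma$ is simply connected and convex, $D$ descends to an isometric embedding of $\Omega$ onto a domain of $\mathbb{H}^n$.

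The hard part is the step that turns $\Pi(v,v)\ge|\Pi_{\H}(v,v)|$ into the vanishing of the bulk defects in \eqref{eq-reilly-alpha-plan}: it demands a careful bookkeeping of the splitting of the Minkowski directions $\partial_{y_\alpha}$ relative to $\Sigma$ into components tangent to $\Sigma$, normal to $\Sigma$ inside $\mathbb{H}^m$, and along $X$ --- with Cauchy--Schwarz used in the positive--definite $\mathbb{H}^m$--normal bundle, not in the indefinite $\mathbb{R}^{m,1}$--normal bundle --- and the sign bookkeeping must be arranged so that the $\tfrac14(n-1)^2$ terms and the boundary ``defect'' integrals $\int_\Sigma H(\partial_\nu u^{(\alpha)}+H^{-1}(\dels x_\alpha+t x_\alpha))^2$ match up. Note that these boundary defects need \emph{not} vanish in the rigid case (already for a geodesic ball in $\mathbb{H}^n$ they do not), so the argument cannot proceed by simply declaring equality in \eqref{eq-thm-bdry-ineq-intro} and must genuinely isolate the interior terms. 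A secondary point is the descent of $D$ in the ``moreover'' part, which relies on the convexity $\Pi\ge0$ and simple connectivity of $\Sigma$; the same convexity is what, in the spherical case (Theorem \ref{thm-spherical-intro}), confines the image to a hemisphere.
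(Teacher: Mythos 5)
Your proposal takes a genuinely different route from the paper's, and at the crucial step it has a gap that I do not think can be closed.

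The paper's proof of Theorem \ref{thm-hyperbolic} does not sum Reilly's identity over the coordinate functions. Instead, for each base point $q\in\Sigma$ it uses a \emph{single} linear function $F_\alpha(x)=\langle\alpha,x\rangle$ on $\mathbb{H}^m$ with $\alpha=X(q)$, solves $\Delta u=nu$ with $u|_\Sigma=F_\alpha\circ X$, and studies the subharmonic scalar $\phi=|\nabla u|^2-u^2$ via the Hopf boundary point lemma: the sign of $\partial\phi/\partial\nu$ at a boundary maximum is controlled precisely by $\Pi(\cdot,\cdot)\ge|\Pi_{\mathbb{H}}(\cdot,\cdot)|$ (Lemma \ref{lem-phi-nu} and the computations \eqref{eq-sign-1}--\eqref{eq-equal-pi} in the spherical case, carried over verbatim). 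That forces $\phi$ constant, hence $\nabla^2u=ug$, and a normal-coordinate argument along a minimizing geodesic to $q$ gives constant curvature near $q$; varying $q$ over $\Sigma$ covers all of $\Omega$. The maximum principle is what isolates the \emph{interior} rigidity despite the boundary ``defect'' terms being nonzero even on a geodesic ball --- the very phenomenon you correctly flag as the obstruction.

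The gap in your plan is exactly at the step you describe as ``the hard part.'' After summing \eqref{eq-reilly-alpha-plan} over $i=1,\dots,m$ and \emph{subtracting} the $\alpha=0$ instance (which you must do, since your identities for $\Pi(\nabla_\Sigma x_\alpha,\nabla_\Sigma x_\alpha)$ and $(\Delta_\Sigma x_\alpha+t x_\alpha)^2$ only close up after the Lorentzian alternating sum), the left-hand side becomes
\begin{equation*}
\sum_{i\ge 1}\Bigl(\text{bulk}(i)+\text{bdry defect}(i)\Bigr)\;-\;\Bigl(\text{bulk}(0)+\text{bdry defect}(0)\Bigr),
\end{equation*}
where each ``bulk'' and ``bdry defect'' term is individually $\ge 0$. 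This quantity is a difference of nonnegative sums and has no sign, so knowing it equals the collapsed right-hand side
\begin{equation*}
\int_\Sigma\frac{|\vec H_{\mathbb{H}}|^2-\tfrac14(n-1)^2}{H}\,d\sigma-\int_\Sigma H\,d\sigma\;(<0)
\end{equation*}
cannot yield $\int_\Omega|\nabla^2u^{(\alpha)}-u^{(\alpha)}g|^2=0$ for every $\alpha$: the negative $\alpha=0$ contribution can absorb everything. The coordinate-summing Reilly argument works for Theorem \ref{thm-immersion-0} precisely because the ambient $\R^m$ is positive definite; in $\R^{m,1}$ the subtraction of the timelike direction destroys the definiteness you need, and the pointwise hypothesis on second fundamental forms, being a boundary condition, cannot substitute for it. (Your Cauchy--Schwarz in the positive-definite normal bundle is the right kind of ingredient, but it is what the paper uses inside the Hopf-lemma computation --- it controls $\partial\phi/\partial\nu$, not the alternating Reilly sum.) A secondary point: your ``moreover'' argument via a developing map also does not match the paper, which instead constructs a \emph{new} isometric embedding $\Phi:\Sigma\to\mathbb{H}^n$ from the fundamental theorem of hypersurfaces and $\Pi\ge0$ (Do Carmo--Warner), glues $\Omega$ to $\mathbb{H}^n\setminus D$ along $\Phi$, and invokes a Busemann-function rigidity (Proposition \ref{H^nK}); in your version the claim that the developing map descends and that $X$ factors through a totally geodesic $\mathbb{H}^n$ is left unsubstantiated.
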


The rest of this paper is organized as follows. In Section \ref{sec-basic-ineq}, we prove 
Theorem \ref{thm-poincare-type-ineq} which implies Theorem \ref{thm-poincare-type-ineq-intro}. 
In Section \ref{sec-apps-bdry-immersion}, we consider applications of Theorem  \ref{thm-poincare-type-ineq} to 
  the total boundary  mean curvature   and prove 
  Theorem \ref{thm-immersion-0-intro} -- \ref{thm-immersion-n0-intro}. 
  In Section \ref{rigidity}, we discuss  the related rigidity question  and prove Theorem \ref{thm-spherical-intro} and \ref{thm-hyperbolic-intro}.

\section{A geometric Poincar\'e type inequality} \label{sec-basic-ineq}

The main result of this section is the following  geometric Poincar\'e type inequality for functions
defined  on the boundary of a compact Riemannian manifold. 

\begin{thm}  \label{thm-poincare-type-ineq}
Let $(\Omega, g)$ be an $n$-dimensional, compact  Riemannian manifold  
   with nonempty  boundary $\Sigma$. 
   Suppose 
   $$ \Ric \ge (n-1) k g  \ \ {and} \ \ H \ge 0 , $$
   where $\Ric $ is the Ricci curvature of $g$, $k$ is some constant,  and $H$ is  
    the mean curvature of  $\Sigma$ in $(\Omega, g)$  with respect to the outward normal. 
Suppose $H$ is not identically zero.   
Then
\be \label{eq-thm-bdry-ineq}
 \int_\Sigma  \Pi (\nabs  \eta , \nabs  \eta ) d \sigma  
\le    \int_{ \Sigma  \setminus  \lf\{  \dels \eta   + t \eta   = 0 \ri\} }
\frac{1}{H} \lf(  \dels \eta   +   t  \eta  \ri)^2 d \sigma  
\ee
 for any  nontrivial function $\eta $ on $\Sigma$ and any constant $ t \le \frac12  (n-1)  k   $. 
   Here  $\Pi (\cdot, \cdot)$ is   the second fundamental 
form of $\Sigma$, 
$ \nabs$ and $\dels$ denote the gradient and the Laplacian on $\Sigma$ respectively.
Moreover, equality in \eqref{eq-thm-bdry-ineq}  holds only if  
either $ k > 0 $, $t =0 $ and $\eta $ is a constant; or 
$ k = t = 0 $ and 
$ \eta $ is the boundary value of some function $u$ on $\Omega$ satisfying $ \nabla^2 u = 0 $.
 Here  $ \nabla^2$  denotes the Hessian   on $(\Omega, g)$.
\end{thm}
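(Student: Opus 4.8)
The plan is to derive \eqref{eq-thm-bdry-ineq} from a weighted Reilly-type formula applied to a carefully chosen harmonic-type extension of $\eta$ into $\Omega$. Given a nontrivial $\eta$ on $\Sigma$ and a constant $t \le \frac12(n-1)k$, first I would solve the boundary value problem $\Delta u = c$ in $\Omega$, $u = \eta$ on $\Sigma$, where the constant $c$ is chosen by the compatibility condition $c \, |\Omega| = \int_\Sigma \partial_\nu u \, d\sigma$ — but in fact the cleaner choice is to take $u$ to be the solution of $\Delta u = 0$ if we want the interior term to drop, or to keep a general $c$ and optimize later. The classical Reilly formula gives
\be
\int_\Omega \left[ (\Delta u)^2 - |\nabla^2 u|^2 - \Ric(\nabla u, \nabla u) \right] dv = \int_\Sigma \left[ 2 u_\nu \dels \eta + H u_\nu^2 + \Pi(\nabs \eta, \nabs \eta) \right] d\sigma,
\ee
where $u_\nu = \partial_\nu u$. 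Using $\Ric \ge (n-1)k g$ and the refined Kato inequality $|\nabla^2 u|^2 \ge \frac{1}{n}(\Delta u)^2$ (or just $|\nabla^2 u|^2 \ge 0$ together with a Bochner-type manipulation separating the normal-normal component of the Hessian), one controls the left side from above, obtaining an inequality of the form
\be
-\int_\Sigma \Pi(\nabs \eta, \nabs \eta)\, d\sigma \ge \int_\Sigma \left[ 2 u_\nu \dels \eta + H u_\nu^2 \right] d\sigma + (n-1)k \int_\Omega |\nabla u|^2 dv - (\text{something}).
\ee
The interior gradient integral is handled by $\int_\Omega |\nabla u|^2 = \int_\Sigma \eta u_\nu \, d\sigma - \int_\Omega u \Delta u \, dv$, which connects it to the constant $c$ and to $\int_\Sigma \eta\, \dels\eta\, d\sigma$ after one more integration by parts on $\Sigma$.

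The heart of the argument is then a pointwise completion of the square on $\Sigma$. On the set where $\dels\eta + t\eta \ne 0$ one writes, for each point,
\be
2 u_\nu \dels\eta + H u_\nu^2 \ge -\frac{(\dels\eta)^2}{H} \quad\text{via}\quad H\left(u_\nu + \frac{\dels\eta}{H}\right)^2 \ge 0,
\ee
but to get the shifted quantity $\dels\eta + t\eta$ and to absorb the curvature term I expect one must instead complete the square as $H\big(u_\nu + \frac{\dels\eta + t\eta}{H}\big)^2$ and then account for the cross terms $2t \eta u_\nu$ and $\frac{(\dels\eta+t\eta)^2 - (\dels\eta)^2}{H}$ using the constraint $t \le \frac12(n-1)k$ precisely to dominate the leftover interior contribution $(n-1)k \int_\Omega |\nabla u|^2$. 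This is the step where the numerology $\frac12(n-1)k$ enters, and it is the main obstacle: one needs the choice of extension $u$ (in particular the value of $c = \Delta u$, likely $c = -\frac{2t}{?}\cdot(\text{avg of }\eta)$ or simply $c$ determined so that the Hessian term, curvature term, and the $2t\eta u_\nu$ term assemble into something manifestly nonnegative) to be exactly right. I anticipate the correct extension is the solution of $\Delta u = -t\,\eta\,$-type equation or, more likely, that one uses the decomposition $\nabla^2 u = \frac1n (\Delta u) g + \mathring{\nabla^2 u}$ only on the boundary and keeps $u$ harmonic, letting the slack in $\frac{n-1}{n}(\Delta u)^2 \le |\nabla^2 u|^2$ be what pays for the gradient term.

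For the equality discussion, I would trace back through the inequalities: equality forces $\nabla^2 u = \frac1n(\Delta u) g$ (from Kato) or $\nabla^2 u = 0$ (if the refined inequality was not used and $\Delta u = 0$), forces $\Ric(\nabla u, \nabla u) = (n-1)k|\nabla u|^2$, forces $u_\nu = -\frac{\dels\eta + t\eta}{H}$ on $\Sigma$, and forces the constraint $t \le \frac12(n-1)k$ to be saturated in whatever auxiliary estimate used it. If $k>0$, a nonzero $\nabla^2 u = \frac1n(\Delta u)g$ with $\Ric \ge (n-1)kg$ leads (via e.g. Obata-type reasoning, or directly from $\Delta|\nabla u|^2$) to a contradiction unless $\nabla u \equiv 0$, hence $\eta$ constant and then $\dels\eta = 0$ forces $t\eta = 0$, i.e. $t = 0$; this is case one. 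If $k = 0$, the optimization in the square-completion step degenerates so that one is forced to $t = 0$ and to $u$ harmonic with $\nabla^2 u = 0$, which is exactly the stated conclusion: $\eta$ is the boundary trace of a function with vanishing Hessian on $\Omega$. The remaining routine checks — that these rigidity conditions are consistent and that the set $\{\dels\eta + t\eta = 0\}$ is handled correctly (it contributes nothing to the right-hand side by construction, and on it the square-completion is trivially an equality) — I would dispatch briefly at the end.
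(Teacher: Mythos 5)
Your overall scaffolding -- Reilly's formula plus a pointwise completion of the square in the normal derivative on $\Sigma$ -- is the right one, and matches the paper's strategy. But there is a genuine gap at exactly the point you flag as ``the main obstacle'': you never pin down the correct interior extension $u$, and the guesses you float (harmonic $u$, $\Delta u = c$ for a compatibility constant $c$, or ``$\Delta u = -t\eta$-type'') do not work. The correct extension is the Helmholtz-type problem $\Delta u + \lambda u = 0$ on $\Omega$ with $u = \eta$ on $\Sigma$, where $\lambda$ is the free parameter. After substituting into Reilly's formula, using the decomposition $|\nabla^2 u|^2 = \frac1n(\Delta u)^2 + |\nabla^2 u - \frac1n(\Delta u)g|^2$ everywhere on $\Omega$ (not just on the boundary), and the identity $\lambda\int_\Omega u^2 = \int_\Omega |\nabla u|^2 - \int_\Sigma u\, u_\nu$, the surviving interior term is $\big(1-\frac1n\big)(nk-\lambda)\int_\Omega|\nabla u|^2 + \int_\Omega|\nabla^2 u + \frac{\lambda}{n} u g|^2$, which is nonnegative precisely when $\lambda \le nk$, and the boundary combination that appears in the square is $\dels\eta + \frac{n-1}{2n}\lambda\,\eta$. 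Setting $t = \frac{n-1}{2n}\lambda$ gives $t \le \frac12(n-1)k$ as required. So the numerology you could not account for is produced entirely by the choice of $\lambda$, not by trading Kato slack against a gradient term. (Solvability of the extension when $k>0$ requires Reilly's eigenvalue theorem $\lambda_1 \ge nk$, which you also do not address.)

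Two further issues are unaddressed in your sketch. First, the hypothesis is only $H\ge 0$, not $H>0$, so the division $1/H$ in the completion of the square is not justified pointwise; the paper regularizes by replacing $H$ with $H+\epsilon$, completes the square, and passes to the limit $\epsilon\to 0$ by monotone convergence, which is what produces the integral over $\Sigma\setminus\{\dels\eta+t\eta=0\}$ rather than over all of $\Sigma$. Second, your treatment of the equality case is too vague to verify. The actual argument derives from equality the pair of conditions $(nk-\lambda)\nabla u = 0$ and $\nabla^2 u + \frac{\lambda}{n}u g = 0$ together with the boundary relation $\dels\eta + H u_\nu + \frac{n-1}{2n}\lambda\eta = 0$. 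When $\lambda < nk$ the first gives $u$ constant, hence $\lambda u = 0$ and $\lambda = 0$ (so $t=0$ and $k>0$). When $\lambda = nk$, taking the tangential trace of $\nabla^2 u + ku g = 0$ on $\Sigma$ produces a second boundary relation $\dels\eta + Hu_\nu + (n-1)k\eta = 0$, and comparing with the first forces $k=0$. This is more concrete than, and not the same as, the ``Obata-type contradiction'' and ``degenerate optimization'' you gesture at. You should fill in the extension choice, the $\epsilon$-regularization, and the explicit equality analysis before this can be considered a proof.
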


\begin{remark}
The case  $ k = 0 $, $H> 0$ and $ t= 0 $ in  \eqref{eq-thm-bdry-ineq} was first proved in \cite{MiaoTamXie11} 
and is related to the second variation of Wang-Yau quasi-local energy   \cite{WangYau-PRL, WangYau08} 
at a closed $2$-surface in $ \R^3 \subset \R^{3,1}$. 
\end{remark}

\begin{proof}
The  basic tool we use is Reilly's formula (\cite{Reilly77})
\be \label{eq-Reilly}
\begin{split}
& \ \int_\Omega \lf[ | \nabla^2 u |^2 -   ( \Delta u )^2 + \Ric (\nabla u, \nabla u) \ri] d V  \\
= & \int_\Sigma \lf[  - \Pi (\nabs u, \nabs u) - 2 (\dels u) \frac{\p u}{\p \nu}
- H \lf( \frac{\p u}{\p \nu} \ri)^2 \ri] d \sigma  , 
\end{split}
\ee
 which follows from integrating   the Bochner  formula. 
Here  $ \Delta $, $dV$  denote  the Laplacian, the volume form  on $ (\Omega, g)$ respectively; 
 $ \nu$ is the outward unit normal to $\Sigma$, and  $u$ is any function defined on $\Omega$. 

Given any  nontrivial $\eta $ on $\Sigma$ and any  constant $ \lam \le n k  $, let  $u$ be the unique solution 
to 
\be \label{eq-u-extension}
\left\{ 
\begin{array}{rcl}
\Delta u + \lam u & = & 0 \ \ \ \mathrm{on} \ \Omega \\
u & = & \eta \ \ \ \mathrm{at} \ \Sigma  .
\end{array}
\right.  
\ee
The fact that \eqref{eq-u-extension} has a unique solution  in the case $k > 0 $ follows 
from another   theorem of Reilly (\cite[Theorem 4]{Reilly77})
which states that   the first Dirichlet eigenvalue $\lam_1 $ of $\Delta$  satisfies $\lam_1 \ge n k $
and $\lam_1 = n k $ if and only if $(\Omega, g)$ is isometric to a  hemisphere in which case 
$\Pi$ is  identically   zero. 
Plug this $u$ in \eqref{eq-Reilly},  using  the fact
\begin{align}
 | \nabla^2 u |^2 &  =    \frac{1}{n} ( \Delta u)^2  +  | \nabla^2 u - \frac{1}{n} (\Delta u) g |^2 ,   \nonumber \\
\lam \int_\Omega u^2 d V & =  \int_\Omega | \nabla u |^2 d V - \int_\Sigma u \frac{\p u}{\p \nu} d \sigma  \nonumber ,
\end{align}
and the assumption $ \Ric \ge (n-1) k g$, 
we have 
\be \label{eq-Reilly-b}
\begin{split}
& \  \lf( 1 - \frac{1}{n} \ri)  ( n k  -  \lam )  \int_\Omega | \nabla u |^2 d V + \int_\Omega | \nabla^2 u +  \frac{1 }{n} \lam u g |^2 d V \\
\le  &  \ 
 \int_\Sigma \lf[  - \Pi (\nabs \eta, \nabs \eta ) - 2 \lf( \dels \eta   + \frac{n-1}{2n}  \lam \eta  \ri)
\frac{\p u}{\p \nu} - H \lf( \frac{\p u}{\p \nu} \ri)^2 \ri]  d \sigma   . 
\end{split}
\ee
Given any  constant $ \epsilon > 0 $,  \eqref{eq-Reilly-b} implies 
\be \label{eq-Reilly-c}
\begin{split}
& \ \int_\Sigma \lf[   - \Pi (\nabs \eta, \nabs \eta )  +  \frac{1}{H+ \epsilon} {\lf(  \dels \eta   + \frac{n-1}{2n}  \lam \eta  \ri)^2 }
 + \epsilon  \lf( \frac{\p u}{\p \nu} \ri)^2  \ri] d \sigma  \\
\ge  & \   \int_\Sigma    \lf[ \frac{ 1 }{\sqrt{H + \epsilon } } \lf(  \dels \eta   + \frac{n-1}{2n}  \lam \eta  \ri) 
  +   \sqrt{H + \epsilon }   \frac{\p u}{\p \nu} \ri]^2  d \sigma \\
& \   +   \lf( 1 - \frac{1}{n} \ri)  ( n k  -  \lam )  \int_\Omega | \nabla u |^2 d V 
+ \int_\Omega | \nabla^2 u  +  \frac{1 }{n} \lam u g  |^2 d V \\
\ge & \ 0 .
\end{split}
\ee
Define 
$ \Sigma_{\eta, \lam}  = \lf\{ x \in \Sigma \ | \ \dels \eta   + \frac{n-1}{2n}  \lam \eta  = 0 \ri\} $. 
By  Lebesgue's monotone convergence theorem,  we have 
\be \label{eq-limit-Lebesgue}
\begin{split}
 & \ \lim_{ \epsilon \rightarrow 0}  \int_{ \Sigma  \setminus \Sigma_{\eta, \lam} }
\frac{1}{H+ \epsilon} \lf(  \dels \eta   + \frac{n-1}{2n}  \lam \eta  \ri)^2 d \sigma   \\
= & \  \int_{ \Sigma  \setminus  \Sigma_{\eta, \lam}   }
\frac{1}{H} \lf(  \dels \eta   + \frac{n-1}{2n}  \lam \eta  \ri)^2 d \sigma   .
\end{split}
\ee
Therefore, it follows  from \eqref{eq-Reilly-c} and  \eqref{eq-limit-Lebesgue}   that 
\be \label{eq-bdry-ineq-a}
 \int_\Sigma  \Pi (\nabs  \eta , \nabs  \eta ) d \sigma  
\le   \int_{ \Sigma  \setminus \Sigma_{\eta, \lam}   }
\frac{1}{H} \lf(  \dels \eta   + \frac{n-1}{2n}  \lam \eta  \ri)^2 d \sigma  ,
\ee
which proves  \eqref{eq-thm-bdry-ineq} by setting $t =  \frac{n-1}{2n}  \lam$. 

Next, suppose  
\be \label{eq-equality-1}
\begin{split}
 \int_\Sigma  \Pi (\nabs  \eta , \nabla_\Sigma \eta ) d \sigma 
 =  \int_{ \Sigma  \setminus \Sigma_{\eta, \lam}  } \frac{1}{H} \lf(  \dels \eta   + \frac{n-1}{2n}  \lam \eta  \ri)^2 d \sigma  .
\end{split} 
\ee
In particular, this shows 
\be
  \frac{1}{\sqrt{H}} \lf(   \dels \eta   + \frac{n-1}{2n}  \lam \eta \ri)  \in L^2 ( \Sigma \setminus \Sigma_{\eta, \lam}  )  
\ee
and the set 
$ \{ x \in \Sigma \setminus \Sigma_{\eta, \lam} \ | \ H (x) = 0 \} $
has $ d \sigma$-measure zero. 
Hence, by \eqref{eq-Reilly-b} and \eqref{eq-equality-1}, we have 
\be \label{eq-Reilly-d}
\begin{split}
& \  \lf( 1 - \frac{1}{n} \ri)  ( n k  -  \lam )  \int_\Omega | \nabla u |^2 d V   + \int_\Omega | \nabla^2 u  +  \frac{1 }{n} \lam u g  |^2 d V  \\
\le  &  \ 
 - \int_{\Sigma \setminus \Sel }   \lf[ \frac{ 1 }{\sqrt{H  } }  \lf(   \dels \eta   + \frac{n-1}{2n}  \lam \eta \ri) 
  +   \sqrt{H }   \frac{\p u}{\p \nu} \ri]^2  d \sigma  \\
  & \ -  \int_{\Sel} H  \lf( \frac{\p u}{\p \nu} \ri)^2  d \sigma ,
\end{split}
\ee
which implies 
\be \label{eq-interior-a}
  (nk - \lam) | \nabla u | = 0 ,  \ \ \nabla^2 u +  \frac{1}{n} \lam u  g = 0  \ \ \ \mathrm{on} \ \Omega
\ee
and
\be \label{eq-bdry-a}
     \dels \eta   +  {H }   \frac{\p u}{\p \nu} + \frac{n-1}{2n}  \lam \eta    = 0  \ \ \ \mathrm{at} \ \Sigma . 
\ee
 
If $ \lam < n k $,   \eqref{eq-interior-a}  shows $ u $ is identically a constant, therefore  $\lam = 0 $, $k > 0 $
and $\eta $ is a constant on $\Sigma$. 

If $ \lam = n k $,  \eqref{eq-interior-a}  shows
\be
\nabla^2 u + k u g = 0 \ \ \mathrm{on} \ \Omega  ,
\ee
which   implies 
\be \label{eq-bdry-b}
\dels \eta  + H \frac{\p u}{\p \nu} + (n-1) k \eta = 0  \ \ \mathrm{at} \ \Sigma. 
\ee
Comparing \eqref{eq-bdry-b} to \eqref{eq-bdry-a} with $\lam = nk$, we have $ k = \lam = 0 $.
This completes the proof. 
\end{proof}


When $H> 0 $, \eqref{eq-thm-bdry-ineq} is  simplified to
$$ \int_\Sigma  \Pi (\nabs  \eta , \nabs  \eta ) d \sigma  
\le    \int_{ \Sigma } \frac{1}{H} \lf(  \dels \eta   +   t  \eta  \ri)^2 d \sigma  .
 $$
In this case, Theorem \ref{thm-poincare-type-ineq}  is 
equivalent to a  statement that, given 
any nontrivial $\eta $ on $\Sigma$, the quadratic form 
\bee
Q_{\eta} (t) : =
 A (\eta) t^2 + 2 B(\eta) t + C(\eta) 
\eee
satisfies 
\be \label{eq-equiv-thm}
 Q_\eta (t) \ge 0 , \ \ \forall \ t \le \frac12 (n-1) k ,
\ee
where 
\begin{align}
A(\eta) & = \int_\Sigma \frac{\eta^2}{H} d \sigma ,  
\ \ \ \ B(\eta) = \int_\Sigma \frac{ \eta \dels \eta }{H} d \sigma ,  \label{eq-AB-eta} \\
C(\eta) & = \int_\Sigma \lf[ \frac{ (\dels \eta)^2}{H} - \Pi (\nabs \eta, \nabs \eta) \ri] d \sigma \label{eq-C-eta} . 
\end{align}
Clearly \eqref{eq-equiv-thm}  is equivalent to asserting  that,  for each fixed $\eta$, 
either
\be \label{eq-case-1}
B(\eta) ^2 \le A(\eta) C(\eta) 
\ee
or 
\be \label{eq-case-2}
\frac12 (n-1) k \leq  -  \frac{  B(\eta)}{A(\eta)}  - \sqrt{ \lf(\frac{B(\eta)}{A(\eta)} \ri)^2 - \frac{C(\eta)}{A(\eta) }  }  . 
\ee
This explains how Theorem \ref{thm-poincare-type-ineq-intro} follows from Theorem \ref{thm-poincare-type-ineq}. 

\vh

Next, we apply Theorem \ref{thm-poincare-type-ineq} to eigenvalue estimates on the boundary. 

\begin{cor} \label{cor-eigenvalue-est}
Let $ (\Omega, g)$, $\Sigma$, $k$, $H$, $\Pi$ be given as  in Theorem \ref{thm-poincare-type-ineq}. 
Suppose $\Sigma$ has a component $\Sigma_0$ which is convex, i.e. $\Pi > 0 $ on $\Sigma_0$. 
 Let $ \kappa > 0 $ be  a constant such that 
 $ \Pi \ge \kappa \gamma $, where $\gamma$ is   the induced metric on $\Sigma_0$. 
 Let $ \lam$ be  a  positive eigenvalue of $\dels$ on $(\Sigma_0, \gamma)$.  
If $ \kappa^2 + 2 k > 0 $, then
\be \label{cor-eigen-est}
 \lam \notin \lf( \frac{1}{4} ( n-1) \lf[ \kappa -  \sqrt{ \kappa^2 + 2 k } \ri]^2, \frac{1}{4} ( n-1) \lf[ \kappa +  \sqrt{ \kappa^2 + 2 k } \ri]^2 \ri) . 
 \ee
In particular, if $ k \ge 0 $, the first nonzero eigenvalue $\lam_1(\Sigma_0)$ of $(\Sigma_0, \gamma)$  satisfies  
\be \label{eq-lam-1}
\lam_1(\Sigma_0)  \ge  \frac{1}{4} ( n-1) \lf[ \kappa +  \sqrt{ \kappa^2 + 2 k } \ri]^2   . 
 \ee
\end{cor}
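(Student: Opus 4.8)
The plan is to apply Theorem \ref{thm-poincare-type-ineq} to a carefully chosen test function $\eta$, namely a nonzero $\lambda$-eigenfunction of $\dels$ on the convex component $\Sigma_0$, extended by zero to the other components of $\Sigma$. Since $\Sigma_0$ is convex with $\Pi \ge \kappa \gamma$, the term $\int_\Sigma \Pi(\nabs \eta, \nabs \eta)\, d\sigma$ is strictly positive for nonconstant $\eta$, so \eqref{eq-case-1} must fail; hence \eqref{eq-case-2} must hold. The first step is therefore to compute, or rather estimate, the three quantities $A(\eta)$, $B(\eta)$, $C(\eta)$ for this $\eta$. Using $\dels \eta = -\lambda \eta$ on $\Sigma_0$ and $\eta \equiv 0$ off $\Sigma_0$, one gets $B(\eta) = -\lambda A(\eta)$ exactly, so $B(\eta)/A(\eta) = -\lambda$. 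For $C(\eta)$, write $C(\eta) = \lambda^2 A(\eta) - \int_{\Sigma_0} \Pi(\nabs \eta, \nabs \eta)\, d\sigma$, and bound $\int_{\Sigma_0}\Pi(\nabs\eta,\nabs\eta)\,d\sigma \ge \kappa \int_{\Sigma_0} |\nabs\eta|^2\, d\sigma = \kappa \lambda \int_{\Sigma_0}\eta^2\,d\sigma$ via integration by parts on the closed manifold $\Sigma_0$. The mild subtlety is that $H$ appears in $A(\eta)$ but not in $\int \Pi(\nabs\eta,\nabs\eta)\,d\sigma$, so I cannot directly combine them unless I also track the weight $H$; the cleanest route is to go back to the pointwise inequality \eqref{eq-thm-bdry-ineq} rather than the ratios.

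So the second, cleaner step: plug $\eta$ directly into \eqref{eq-thm-bdry-ineq}, which (since $\eta$ is supported on $\Sigma_0$, and there $\dels\eta + t\eta = (t-\lambda)\eta$) reads
\be
\int_{\Sigma_0} \Pi(\nabs\eta,\nabs\eta)\, d\sigma \le (t-\lambda)^2 \int_{\Sigma_0} \frac{\eta^2}{H}\, d\sigma
\ee
for all $t \le \frac12(n-1)k$, provided $t \ne \lambda$ (if $t = \lambda$ the right side is interpreted as an integral over the empty set and the inequality forces $\int_{\Sigma_0}\Pi(\nabs\eta,\nabs\eta)\,d\sigma \le 0$, impossible for nonconstant $\eta$ on a convex component, so in fact $\lambda > \frac12(n-1)k$ automatically in that degenerate borderline, consistent with the claim). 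Now here the weight $H$ reappears on the right and the left is weight-free; but this is exactly the form in which Theorem \ref{thm-poincare-type-ineq-intro}/the quadratic-form reformulation is set up, so I should instead invoke the reformulation \eqref{eq-equiv-thm}: $Q_\eta(t) = A(\eta) t^2 + 2B(\eta) t + C(\eta) \ge 0$ for all $t \le \frac12(n-1)k$. Substituting $B(\eta) = -\lambda A(\eta)$ and $C(\eta) = \lambda^2 A(\eta) - P$ where $P := \int_{\Sigma_0}\Pi(\nabs\eta,\nabs\eta)\,d\sigma$, this becomes $A(\eta)(t-\lambda)^2 \ge P$ for all $t \le \frac12(n-1)k$. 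If $\lambda \le \frac12(n-1)k$ we could take $t = \lambda$ and get $P \le 0$, contradiction; hence $\lambda > \frac12(n-1)k$ and the optimal choice is $t = \frac12(n-1)k$, giving $A(\eta)\big(\lambda - \tfrac12(n-1)k\big)^2 \ge P$.

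The third step is the eigenvalue arithmetic. I need a lower bound on $P/A(\eta)$; but $P$ and $A(\eta)$ carry different weights, so I cannot get a clean constant this way either — which tells me the intended argument must keep the weight $H$ throughout and use it to cancel. Reconsidering: the honest approach is to use \eqref{eq-thm-bdry-ineq} with $\Pi \ge \kappa\gamma$ replaced more cleverly. Actually, the resolution is that one applies the inequality not to a single eigenfunction but uses $\Pi(\nabs\eta,\nabs\eta) \ge \kappa|\nabs\eta|^2$ pointwise and then integrates by parts against the weight $\frac1H$ is not needed — instead, observe the inequality to prove is scale-free in $\lambda$ and one wants: from $\int_{\Sigma_0}\kappa|\nabs\eta|^2 d\sigma \le \int_{\Sigma_0}\frac1H(\dels\eta+t\eta)^2 d\sigma$, apply the pointwise Cauchy–Schwarz / the same Reilly machinery. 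I expect the genuine route, and the step I anticipate as the main obstacle, is precisely reconciling the mismatched weights: one must redo a weighted version of the computation (as in the proof of Theorem \ref{thm-poincare-type-ineq} itself, tracking $\frac{1}{H+\epsilon}$), apply $\Pi \ge \kappa \gamma$ before passing to $A,B,C$, and arrive at $\kappa \lambda \le (\lambda - t)^2$ (weights having cancelled because $\dels\eta = -\lambda\eta$ makes $\int \frac{\eta\dels\eta}{H} = -\lambda\int\frac{\eta^2}{H}$ and $\int\frac{(\dels\eta)^2}{H} = \lambda^2\int\frac{\eta^2}{H}$, while the Rayleigh quotient forces $\int_{\Sigma_0}|\nabs\eta|^2 = \lambda\int_{\Sigma_0}\eta^2$ — the discrepancy being that $\int\frac{\eta^2}{H}$ is not $\int\eta^2$). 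Granting that the correct output of the machinery is
\be
\kappa \le \frac{(\lambda - t)^2}{\lambda} \qquad \text{for } t = \tfrac12(n-1)k,
\ee
setting $s = \sqrt{\lambda}$ and $a = \frac12(n-1)k$ this reads $\kappa s^2 \le s^4 - 2a s^2 + a^2$... which is still not quite the stated quadratic. Taking $t = \tfrac12(n-1)k$ but writing the target inequality \eqref{cor-eigen-est} as: $4\lambda/(n-1) \notin \big(\kappa-\sqrt{\kappa^2+2k}\big)^2 \cdot \text{something}$ — the quantity $\kappa + \sqrt{\kappa^2+2k}$ and its conjugate are the roots of $\mu^2 - 2\kappa\mu - 2k = 0$, i.e. of $\tfrac14(\mu - \kappa)^2 = \tfrac14(\kappa^2+2k)$, so $\lambda = \tfrac14(n-1)\mu^2$ is excluded precisely when $\mu^2 - 2\kappa\mu - 2k < 0$, equivalently $\tfrac{4\lambda}{n-1} - 2\kappa\sqrt{\tfrac{4\lambda}{n-1}} - 2k < 0$. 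Comparing with $Q_\eta(t) \ge 0$ at $t = \tfrac12(n-1)k =: a$ after substituting the correct weighted identities — the final step is to show the weighted quadratic $Q_\eta(a) \ge 0$ is exactly equivalent to $\mu^2 - 2\kappa\mu - 2k \ge 0$ with $\mu = \sqrt{4\lambda/(n-1)}$, via the relation $\int_{\Sigma_0}\Pi(\nabs\eta,\nabs\eta)d\sigma \ge \kappa \cdot \frac{2n}{n-1}$-rescaled\ldots. I would present the clean chain, defer the bookkeeping of constants to a direct substitution, and flag that the only real content beyond Theorem \ref{thm-poincare-type-ineq} is: (i) nonconstant eigenfunctions on a convex component violate \eqref{eq-case-1}, forcing \eqref{eq-case-2}; (ii) plugging $B/A = -\lambda$ and the bound on $C$ coming from $\Pi \ge \kappa\gamma$ into \eqref{eq-case-2} and solving the resulting quadratic inequality in $\sqrt{\lambda}$. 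When $k \ge 0$ one has $\kappa^2 + 2k > 0$ automatically and the lower endpoint of the excluded interval is $\le$ the smallest possible positive eigenvalue considerations give $\lambda_1(\Sigma_0) \ge \frac14(n-1)[\kappa+\sqrt{\kappa^2+2k}]^2$, since $\lambda_1$ cannot lie in the open interval and cannot be below its left endpoint either — this last point needs the observation that any $\lambda \in (0, \text{left endpoint}]$ would also have to avoid the interval, and one checks the left endpoint itself is handled by the closed nature of the complement, or more simply that for $k\ge 0$ the argument directly yields $\lambda \ge$ right endpoint for the \emph{first} eigenvalue by a continuity/connectedness argument on the spectrum.
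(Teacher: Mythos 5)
Your setup is right: take $\eta$ to be a $\lambda$-eigenfunction on $\Sigma_0$ extended by zero, observe $B(\eta)^2 - A(\eta)C(\eta) = A(\eta)\int_{\Sigma_0}\Pi(\nabs\eta,\nabs\eta)\,d\sigma > 0$ so \eqref{eq-case-1} fails and \eqref{eq-case-2} must hold, and compute $B/A = -\lambda$, $C = \lambda^2 A - P$ with $P = \int_{\Sigma_0}\Pi(\nabs\eta,\nabs\eta)\,d\sigma$, reducing the problem to a lower bound on $P/A(\eta)$. But you then explicitly stall on the weight mismatch between $P$ (no weight) and $A(\eta) = \int_{\Sigma_0}\eta^2/H\,d\sigma$ (weight $1/H$), and the two escape routes you sketch both miss. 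The fix is one elementary line that you never write down: the hypothesis $\Pi \ge \kappa\gamma$ forces $H = \mathrm{tr}\,\Pi \ge (n-1)\kappa$ pointwise, hence
\[
A(\eta) = \int_{\Sigma_0}\frac{\eta^2}{H}\,d\sigma \;\le\; \frac{1}{(n-1)\kappa}\int_{\Sigma_0}\eta^2\,d\sigma.
\]
Combined with $P \ge \kappa\int_{\Sigma_0}|\nabs\eta|^2\,d\sigma = \kappa\lambda\int_{\Sigma_0}\eta^2\,d\sigma$, this gives $P/A(\eta) \ge (n-1)\kappa^2\lambda$, so \eqref{eq-case-2} becomes
\[
\tfrac12(n-1)k \;\le\; \lambda - \sqrt{P/A(\eta)} \;\le\; \lambda - \kappa\sqrt{(n-1)\lambda},
\]
a quadratic in $\sqrt{\lambda}$ whose roots are $\tfrac12\sqrt{n-1}\,[\kappa \pm \sqrt{\kappa^2+2k}]$, which is exactly \eqref{cor-eigen-est}. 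Your proposed attempt to ``redo a weighted version of the computation tracking $1/(H+\epsilon)$'' misidentifies where the resolution lives --- there is no hidden weighted cancellation to exploit --- and your conjectured endpoint inequality $\kappa\lambda \le (\lambda - t)^2$ is also not the right one: it is missing the factor $n-1$ and the square-root scaling, so it would not reproduce the stated interval. Finally, your worry about the left endpoint in the $k \ge 0$ case dissolves once you phrase the conclusion as a quadratic inequality in $\sqrt{\lambda}$: for $k \ge 0$ the smaller root $\tfrac12\sqrt{n-1}\,[\kappa - \sqrt{\kappa^2+2k}]$ is $\le 0$, so $\sqrt{\lambda} > 0$ forces $\sqrt{\lambda}$ to be at least the larger root, with no need for a connectedness-of-spectrum argument.
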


\begin{proof}
By defining $\eta=0$ everywhere on $\Sigma \setminus \Sigma_0$, 
Theorem \ref{thm-poincare-type-ineq} implies 
$$ \int_{\Sigma_0}   \Pi (\nabs  \eta , \nabs  \eta ) d \sigma  
\le    \int_{ \Sigma_0 } \frac{1}{H} \lf(  \dels \eta   +   t  \eta  \ri)^2 d \sigma  , 
 $$
for any $\eta $ defined on $\Sigma_0$ and any  $ t \le \frac{1}{2}(n-1) k$.  
 Let $ A (\eta)$, $B(\eta)$ and $C (\eta) $ be given in \eqref{eq-AB-eta} and \eqref{eq-C-eta} with 
 $\Sigma$ replaced by $\Sigma_0$. 
Suppose $\eta$ is a nonzero eigenfunction, i.e.   $ \dels \eta + \lam \eta = 0 $. Then 
\bee
\begin{split}
B(\eta)^2 - A(\eta) C(\eta) = \lf(  \int_{\Sigmao} \frac{ \eta^2}{H} d \sigma \ri)
\lf( \int_{\Sigmao} \Pi (\nabs \eta, \nabs \eta)  d \sigma \ri)  > 0 .
\end{split}
\eee
Therefore,  \eqref{eq-case-2}  holds,   
which  shows  
\be \label{eq-condition-2-a}
\frac{1}{2} (n-1) k \le \lam - \lf( \frac{  \int_{\Sigmao} \Pi (\nabs \eta, \nabs \eta)  d \sigma}{ \int_{\Sigmao} \frac{ \eta^2}{H} d \sigma } \ri)^\frac12  . 
\ee
On the other hand, 
\be
\int_{\Sigmao} \Pi (\nabs \eta, \nabs \eta)  d \sigma \ge \kappa \int_{\Sigmao} | \nabs \eta |^2 d \sigma = \kappa \lam \int_{\Sigmao} \eta^2 d \sigma 
\ee
and
\be \label{eq-est-H}
\int_{\Sigmao} \frac{\eta^2}{H} d \sigma \le \frac{1}{(n-1) \kappa}  \int_{\Sigmao} \eta^2  d \sigma . 
\ee
Hence, \eqref{eq-condition-2-a} -- \eqref{eq-est-H}  imply  
\be \label{eq-condition-2-c} 
\frac{1}{2} (n-1) k \le \lam- \kappa \sqrt{(n-1) \lam} .
\ee
When  $ \kappa^2 + 2 k > 0 $, it follows from \eqref{eq-condition-2-c} that 
$$ \sqrt{\lam} \notin \lf( \frac{1}{2} \sqrt{ n-1} \lf[ \kappa -  \sqrt{ \kappa^2 + 2 k } \ri] ,
\frac{1}{2} \sqrt{ n-1} \lf[ \kappa +  \sqrt{ \kappa^2 + 2 k } \ri]  \ri) , $$
which completes the proof. 
\end{proof} 

\begin{remark}
Corollary \ref{cor-eigenvalue-est} is motivated by  results in \cite{ChoiWang83, Xia97}. 
If $k = 0 $, \eqref{eq-lam-1} reduces to $\lam_1(\Sigma_0) \ge  (n-1) \kappa^2 $ which 
is the estimate in  \cite{Xia97}. 
\end{remark}

\section{Application to total mean curvature}  \label{sec-apps-bdry-immersion}

In this section, we  recall  the statement of Theorem \ref{thm-immersion-0-intro} -- \ref{thm-immersion-n0-intro}
and give  their proof.
 We begin with the case of nonnegative Ricci curvature. 

\begin{thm} \label{thm-immersion-0}
Let $(\Omega, g)$ be an $n$-dimensional, compact  Riemannian manifold with nonnegative Ricci curvature,
   with connected  boundary $\Sigma$ which has nonnegative mean curvature $H$.
Let $X: \Sigma  \rightarrow \R^m $ be an isometric immersion of $\Sigma$ 
into some  Euclidean space $\R^m$   of dimension  $m\ge n$.  Then 
\begin{equation} 
\label{eq-immersion-0-ineq} 
\int_{\Sigma} H \ d \sigma \le \int_{\Sigma'}  \frac{ | \vec{H}_0 |^2 }{H}   \ d {\sigma} ,
\end{equation}
where  $\vec{H}_0$ is  the mean curvature vector of the  immersion $X$
and $ \Sigma'  \subset \Sigma $ is the set $ \{  \vec{H}_0 (x) \neq 0 \}$.
 Moreover, if equality in (\ref{eq-immersion-0-ineq}) 
holds, then 
\begin{itemize}
\item[a)]  $ H = | \vec{H}_0 | $ identically  on $\Sigma$.
\item[b)]   $(\Omega, g)$ is flat and  $X(\Sigma)$ lies in an $n$-dimensional  plane in  $\R^m$.
\item[c)]   $(\Omega, g)$ is isometric to a domain in $\R^n$ if $X$ is an embedding. 
\end{itemize} 
\end{thm}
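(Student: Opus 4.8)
The plan is to apply Theorem~\ref{thm-poincare-type-ineq} with $k=0$ and $t=0$ to the coordinate functions of the immersion $X$, sum over them, and then extract the rigidity from the equality clause of that theorem together with the divergence theorem on $\Omega$.

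Write $X=(X^1,\dots,X^m):\Sigma\to\R^m$. Two standard facts about an isometric immersion into flat space will be used: $\dels X^a=(\vec{H}_0)^a$ (Beltrami), so $\sum_a(\dels X^a)^2=|\vec{H}_0|^2$; and, since $\sum_a\nabs X^a\otimes\nabs X^a$ is the metric dual of the induced metric on $\Sigma$, contracting with $\Pi$ gives $\sum_a\Pi(\nabs X^a,\nabs X^a)=H$. As $\Ric\ge 0$ and $H\ge 0$ is not identically zero, \eqref{eq-thm-bdry-ineq} applies with $k=t=0$ to each nonconstant component $X^a$ (the constant ones contributing the trivial $0\le 0$), and summing on $a$ yields
\[
\int_\Sigma H\,d\sigma=\sum_a\int_\Sigma\Pi(\nabs X^a,\nabs X^a)\,d\sigma\ \le\ \sum_a\int_{\Sigma\setminus\{\dels X^a=0\}}\frac{(\dels X^a)^2}{H}\,d\sigma\ =\ \int_{\Sigma'}\frac{|\vec{H}_0|^2}{H}\,d\sigma ,
\]
the last equality because $\{\dels X^a\neq 0\}\subseteq\Sigma'$ and $\sum_a(\dels X^a)^2=|\vec{H}_0|^2$. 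This is \eqref{eq-immersion-0-ineq}.

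Now suppose equality holds. Then each of the $m$ inequalities above is an equality, so by the equality clause of Theorem~\ref{thm-poincare-type-ineq} (with $k=0$) every $X^a$ is the boundary value of some $u^a$ on $\Omega$ with $\nabla^2u^a=0$. Each $\nabla u^a$ is parallel, so $u^a$ is harmonic; decomposing $\Delta u^a$ across $\Sigma$ gives $\dels X^a+H\,\p_\nu u^a=0$ on all of $\Sigma$. Hence $\vec{H}_0=0$ wherever $H=0$ (which already gives assertion a) there and shows $\Sigma'\subseteq\{H>0\}$), and $\p_\nu u^a=-(\vec{H}_0)^a/H$ on $\{H>0\}$. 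Consider the symmetric $2$-tensor $T:=\sum_a du^a\otimes du^a$; since the $\nabla u^a$ are parallel, $T$ is parallel, so $\tr_g T$ is a constant and $T-g$ is a parallel tensor, determined by its value at any one point. At a point $q\in\Sigma$ with $H(q)>0$, using that $X$ is isometric and that $\vec{H}_0$ is normal to $dX(T_q\Sigma)$, one finds $(T-g)(e,e')=0$ and $(T-g)(e,\nu)=0$ for $e,e'\in T_q\Sigma$, while $(T-g)(\nu,\nu)=|\vec{H}_0|^2/H^2-1$; thus $\tr_g T=n+\big(|\vec{H}_0|^2/H^2-1\big)$ at $q$. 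Since $\tr_g T$ is a global constant, $|\vec{H}_0|^2/H^2$ takes one and the same value $c^2$ throughout $\Sigma\cap\{H>0\}$; if $c=0$ then $\vec{H}_0\equiv 0$ on $\Sigma$, forcing the right side of \eqref{eq-immersion-0-ineq} to vanish and hence $\int_\Sigma H=0$, impossible. So $c>0$, $\Sigma'=\Sigma\cap\{H>0\}$, and substituting $|\vec{H}_0|^2/H=c^2H$ into the equality \eqref{eq-immersion-0-ineq} gives $\int_\Sigma H=c^2\int_\Sigma H$, so $c=1$. Therefore $(T-g)_q=0$, and by parallelism $T\equiv g$.

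It follows that $U:=(u^1,\dots,u^m):\Omega\to\R^m$ is an isometric immersion whose second fundamental form, being $(\nabla^2u^a)_a$, vanishes; so $U$ is totally geodesic, $(\Omega,g)$ is flat, and $X(\Sigma)=U(\Sigma)$ lies in an $n$-plane — this is b). The mean curvature vector of $X=U|_\Sigma$ is then $U_*(H\nu)$, of length $H$ — this is a). Finally, if $X$ is an embedding, then $U|_\Sigma$ is an embedding and $U$ is a local isometry of the compact $\Omega$ into an $n$-plane, and a standard argument (as in \cite{HangWang07}) promotes $U$ to an embedding onto a domain — this is c). The computational part is the inequality; I expect the main obstacle to be the equality analysis, particularly the step upgrading ``$|\vec{H}_0|/H$ constant'' to ``$|\vec{H}_0|\equiv H$'', where the global constancy of $\tr_g T$ is played off against the integral identity \eqref{eq-immersion-0-ineq}, and the concluding embeddedness statement — both handled following \cite{HangWang07}.
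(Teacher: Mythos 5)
Your proof is correct, and the inequality half is the paper's argument verbatim: apply Theorem~\ref{thm-poincare-type-ineq} with $k=t=0$ to each coordinate $X^a$, use $\dels X^a=(\vec H_0)^a$ and $\sum_a\Pi(\nabs X^a,\nabs X^a)=H$, and sum. The rigidity analyses share the key ingredients --- extensions $u^a$ with $\nabla^2u^a=0$, the boundary relation $\dels X^a+H\,\p_\nu u^a=0$, the parallel tensor $T=\sum_a du^a\otimes du^a$, and the goal $T=g$ --- but you organize the middle step differently. The paper first proves $d\Phi(\nu)\neq 0$ everywhere on $\Sigma$ via a rank count for $d\Phi=(du^1,\dots,du^m)$, deduces $\Sigma'=\{H>0\}$, and then extracts $|d\Phi(\nu)|\equiv 1$ on $\Sigma'$ from the integral identity (a deduction that implicitly rests on $|d\Phi(\nu)|^2$ being constant along $\Sigma$). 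You instead compute the block form of $T-g$ at a boundary point with $H>0$, read off $\tr_g(T-g)=|\vec H_0|^2/H^2-1$ there, invoke the global constancy of $\tr_g T$ (automatic since $T$ is parallel) to get $|\vec H_0|/H\equiv c$ on $\{H>0\}$, rule out $c=0$, and then force $c=1$ from the integral equality, whence $T=g$ at one point and by parallelism on all of $\Omega$. This variant is slightly tighter: it makes explicit the constancy the paper leans on tacitly and bypasses the rank argument altogether, while also delivering $\Sigma'=\{H>0\}$ and part a) along the way. The remaining conclusions (flatness, total geodesy of $U=(u^1,\dots,u^m)$, and the embeddedness statement c)) coincide with the paper's appeal to the Hang--Wang argument.
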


\begin{proof}
Since $X$ is an isometric immersion,  one has
\be \label{eq-immersion-0-H-vector}
\dels X = \vec{H}_0 .
\ee
At any $x \in \Sigma$, let  $\{ v_\alpha \ | \ \alpha = 1 , \dots, n -1 \} \subset T_x \Sigma $ be an orthonormal frame 
that  diagonalizes $\Pi $, i.e.
$ \Pi (v_\alpha, v_\beta) = \delta_{\alpha \beta} \kappa_{\alpha} $
where $\{ \kappa_1, \dots, \kappa_{n-1} \}$ are the principal curvature of $\Sigma$ in $(\Omega, g)$ at $x$. 
Let $\{ e_1, \ldots, e_m \}$ denote the standard  basis in $ \R^m$ and 
 $x_i $ be the $i$-th component of $X$.
Then
\be \label{eq-immersion-0-sum-PI}
\begin{split}
\sum_{i=1}^m \Pi (\nabs x_i, \nabs x_i ) = & \ \sum_{i=1}^m \sum_{\alpha, \beta  = 1}^{n-1} 
\Pi (v_\alpha , v_\beta   ) \la e_i , v_\alpha \ra \la e_i, v_\beta \ra \\
= & \ \sum_{\alpha = 1}^{n-1} \kappa_\alpha  = H  .
\end{split}
\ee

Set $k=0$  in Theorem \ref{thm-poincare-type-ineq} and choose $\eta = x_i$,  
 $t = 0 $ in \eqref{eq-thm-bdry-ineq}, we have 
\be \label{eq-immersion-0-bdry-ineq}
 \int_\Sigma  \Pi (\nabs  x_i , \nabs  x_i ) d \sigma  
\le    \int_{ \Sigma }
\frac{1}{H} \lf(  \dels x_i \ri)^2  1_{\Sigma_i'} d \sigma  
\ee
where $ 1_{\Sigma_i'}$ is the characteristic function of the set 
$ \Sigma_i' = \Sigma \setminus  \lf\{  \dels x_i   = 0 \ri\}  $. 
Summing $ \eqref{eq-immersion-0-bdry-ineq}$ over $i$, using \eqref{eq-immersion-0-H-vector}, \eqref{eq-immersion-0-sum-PI}
 and the fact $ \Sigma_i' \subset \Sigma'$,  we have 
\be \label{eq-immersion-0-ineq-pf}
 \int_\Sigma H d \sigma  \le  \int_{\Sigma'}  \frac{1}{H}  | \vec{H}_0 |^2  d \sigma , 
\ee
which proves \eqref{eq-immersion-0-ineq}. 

Next suppose 
\be \label{eq-immersion-0-equality-pf}
 \int_\Sigma H d \sigma =  \int_{\Sigma'}  \frac{1}{H}  | \vec{H}_0 |^2  d \sigma .
\ee
Then it follows from \eqref{eq-immersion-0-bdry-ineq} that 
\be
 \int_\Sigma  \Pi (\nabs  x_i , \nabs  x_i ) d \sigma  
=    \int_{ \Sigma }
\frac{1}{H} \lf(  \dels x_i \ri)^2  1_{\Sigma_i'} d \sigma, \ \ \forall \ i . 
\ee
By the rigidity part of Theorem \ref{thm-poincare-type-ineq}, 
there exist functions $u_i $, $1 \le i \le m  $, such that
 $  u_i = x_i  $ {at}  $ \Sigma$ and 
\be \label{eq-hessian-ui-zero}
 \nabla^2 u_i = 0  \ \mathrm{on} \ \Omega .
 \ee
Moreover, by  \eqref{eq-bdry-a} or \eqref{eq-bdry-b}, we have 
\be \label{eq-immersion-H-relation}
\vec{H}_0 + H  d \Phi  (\nu) = 0  \ \mathrm{at} \ \Sigma ,
\ee
where  $ \Phi : \Omega \rightarrow \R^m $  is a (harmonic)  map defined by 
$ \Phi = (u_1, \ldots, u_m ) $, 
$ d \Phi = ( d u_1, \ldots, d u_m ) $
is the associated  tangent map, and $\nu$ is the unit outward normal to $\Sigma$
in $(\Omega, g)$. 

We claim 
\be \label{eq-nu-x}
d \Phi  (\nu) (x) \neq 0,  \  \ \forall \ x \in  \Sigma.
\ee
To see this,  first consider a point $ y  \in  \Sigma'$ ($\Sigma'  \neq \emptyset$ by \eqref{eq-immersion-0-H-vector}). 
At  $y$, \eqref{eq-immersion-H-relation}  implies 
\be \label{eq-nu-p}
d \Phi  (\nu) (y)  \neq 0 \ \ \mathrm{and} \ \ d \Phi  (\nu)  (y) \perp  X(\Sigma) . 
\ee
Hence, the rank of $ d \Phi $ at $y$ is $n$  
by  \eqref{eq-nu-p} and the fact  $ \Phi |_\Sigma = X $ .
  On the other  hand,  \eqref{eq-hessian-ui-zero} shows $ d u_i $ is parallel on $\Omega$,  $ \forall \ i$.
Therefore, the rank of $ d \Phi $ equals $n$  everywhere on $\Omega$.
In particular, this proves \eqref{eq-nu-x}. 

By  \eqref{eq-immersion-H-relation} and \eqref{eq-nu-x}, we now  have 
\be \label{eq-H-zero-set}
 \{ x \in \Sigma \ | \ H (x) \neq 0 \} = \Sigma' . 
 \ee
Thus  \eqref{eq-immersion-0-equality-pf} becomes  
\be \label{eq-equality-pf-2}
\int_{\Sigma'} H d \sigma = \int_{\Sigma'} H | d \Phi (\nu) |^2 d \sigma 
\ee
by  \eqref{eq-immersion-H-relation}. 
As  $ \Sigma' $ is a nonempty open set in $\Sigma$,  \eqref{eq-H-zero-set} and \eqref{eq-equality-pf-2}  imply 
\be \label{eq-dPhi}
  | d \Phi  (\nu) | (z) = 1 \ \mathrm{and}  \ d \Phi(\nu) (z) \perp X(\Sigma), 
   \  \forall \  z \in \Sigma'  .
  \ee
  It follows from  \eqref{eq-dPhi},  \eqref{eq-immersion-H-relation} and \eqref{eq-H-zero-set}
  that  $H = | \vec{H}_0 | $ identically on $ \Sigma $.

The rest of the claims now follows  from \cite[Proposition 2]{HangWang07}. 
 For completeness, we include  the proof.  
By \eqref{eq-dPhi} and the fact $ \Phi |_\Sigma = X $, one knows  
$ g =   \sum_{i=1}^m d u_i \otimes du_i $ at $\Sigma'$. 
As a result, 
$ g =   \sum_{i=1}^m d u_i \otimes du_i $
on $\Omega$  as  both tensors are parallel.  
Clearly  this shows    $(\Omega, g)$ is flat  and  $\Phi$ is an isometric immersion.
Next, let $v, w$ be any  tangent vectors to $\Omega$.
 \eqref{eq-hessian-ui-zero} implies 
\be \label{eq-totally-geodesic}
0 =  v w (\Phi) - \nabla_v w (\Phi) = \overline{\nabla}_{d \Phi (v) }  ( d \Phi (w) )  - d \Phi (\nabla_v w ) ,
\ee
where $\nabla$ and $\overline{\nabla}$ denote 
the connection on $(\Omega, g)$ and $\R^m$ respectively. 
By definition,  \eqref{eq-totally-geodesic} shows   $\Phi: \Omega \rightarrow \R^m$ is totally geodesic. 
Therefore, $\Phi(\Omega)$ (hence  $X(\Sigma))$ lies in an $n$-dimensional plane in  $\R^m$. 
Without losing generality, one can assume $ \Phi (\Omega) \subset \R^n$. 
If $X: \Sigma \rightarrow \R^m$ is indeed an  embedding, then $X(\Sigma) = \p W $ where $W$ is the closure 
of a bounded domain in $ \R^n$. 
Since  $ \Phi$ is an immersion and $ \Phi |_\Sigma = X $, 
 one can show   $ \Phi (\Omega) \subset  W $ 
 and $\Phi: \Omega \setminus \Sigma \rightarrow W \setminus \p W $
 (by checking that $ \Phi (\Omega) \setminus {W}$ is both open and closed
 in  $ \R^n \setminus W $). 
 On the other hand,   $ \Phi$ being a local isometry 
 implies $\Phi: \Omega \rightarrow W $ is  a covering map.  Therefore,  
 $\Phi$ is a homeomorphism, and hence  an isometry between $\Omega$ and $ W$.
\end{proof}


\begin{thm} \label{thm-immersion-p0}
Let $(\Omega, g)$ be an $n$-dimensional, compact  Riemannian manifold 
with positive Ricci curvature,
 with connected  boundary $\Sigma$ which has nonnegative mean curvature $H$.
Let  $k>0$ be a constant such that  
$$ \Ric \ge (n-1) k g . $$
 Suppose  there exists an isometric immersion $X: \Sigma  \rightarrow \mS^{m}_{k}  $,
 where   $\mS^m_k$ is the  sphere of  dimension $m \ge n$ with constant sectional curvature $k$.  Then 
\begin{equation} 
\label{eq-immersion-p0-ineq} 
\int_{\Sigma} H \ d \sigma < \int_{\Sigma}  \frac{ | \vec{H}_{\mS} |^2  + \frac14  (n-1)^2 k   }{H}   \ d {\sigma} ,
\end{equation}
where  $\vec{H}_{\mS} $ is  the mean curvature vector of the immersion $X$ into $\mS^m_{k}$.
 \end{thm}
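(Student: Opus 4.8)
The plan is to imitate the proof of Theorem~\ref{thm-immersion-0}, using Theorem~\ref{thm-poincare-type-ineq} with the sphere $\mS^m_k$ as the target of the isometric immersion rather than Euclidean space. Write $X = (x_1, \dots, x_{m+1}) : \Sigma \to \mS^m_k \subset \R^{m+1}$, where $\mS^m_k = \{ \sum_{i} x_i^2 = 1/k \}$. The key computational facts I expect to need are: first, since $X$ is an isometric immersion into $\mS^m_k$, the component functions satisfy $\dels x_i = -k x_i + (\vec H_\mS)_i$ where $(\vec H_\mS)_i$ is the $i$-th component of the mean curvature vector $\vec H_\mS$ of $X$ in $\mS^m_k$ (this is the sphere analogue of $\dels X = \vec H_0$, obtained by decomposing the Euclidean Laplacian of the position vector into its tangential, spherical-normal, and radial parts); second, the pointwise identity $\sum_{i=1}^{m+1} \Pi(\nabs x_i, \nabs x_i) = H$, which holds exactly as in \eqref{eq-immersion-0-sum-PI} because $\{ \text{projections of } e_i \text{ onto } T_x\Sigma \}$ still acts as a ``partition of the metric'' on $T_x \Sigma$ (the ambient being $\R^{m+1}$ with $\Sigma \hookrightarrow \mS^m_k \hookrightarrow \R^{m+1}$ isometrically); and third, the normalization $\sum_i x_i^2 = 1/k$ together with $\sum_i x_i \nabs x_i = 0$ and hence $\sum_i |\nabs x_i|^2 = \ldots$, which I will need to control the cross terms.

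Next I would apply Theorem~\ref{thm-poincare-type-ineq} with this $k > 0$, taking $\eta = x_i$ and the constant $t = \tfrac12 (n-1) k$ (the largest value allowed). For each $i$ this gives
\be
\int_\Sigma \Pi(\nabs x_i, \nabs x_i)\, d\sigma \le \int_{\Sigma \setminus \{ \dels x_i + t x_i = 0 \}} \frac{1}{H} \big( \dels x_i + \tfrac12 (n-1) k\, x_i \big)^2\, d\sigma.
\ee
Using $\dels x_i = -k x_i + (\vec H_\mS)_i$, the integrand becomes $\frac{1}{H}\big( (\vec H_\mS)_i + (\tfrac12 (n-1) - 1) k\, x_i \big)^2 = \frac{1}{H}\big( (\vec H_\mS)_i + \tfrac12 (n-3) k\, x_i \big)^2$. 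Summing over $i$ and expanding the square, I get $\sum_i (\vec H_\mS)_i^2 = |\vec H_\mS|^2$, the cross term $2 \cdot \tfrac12(n-3)k \sum_i x_i (\vec H_\mS)_i$, and $\tfrac14 (n-3)^2 k^2 \sum_i x_i^2 = \tfrac14 (n-3)^2 k$. The cross term $\sum_i x_i (\vec H_\mS)_i$ vanishes because $\vec H_\mS$ is normal to $\mS^m_k$ within... wait, it is tangent to $\mS^m_k$ and normal to $\Sigma$; in any case $\sum_i x_i (\vec H_\mS)_i = \la X, \vec H_\mS \ra_{\R^{m+1}} = 0$ since $\vec H_\mS \in T_X \mS^m_k$ which is orthogonal to the position vector $X$. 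So after summing, the left side is $\int_\Sigma H\, d\sigma$ and the right side is bounded by $\int_\Sigma \frac{1}{H}\big( |\vec H_\mS|^2 + \tfrac14 (n-3)^2 k \big) d\sigma$, which is \emph{not quite} \eqref{eq-immersion-p0-ineq}; I would then check whether choosing a different $t$, or a more careful bookkeeping of which constant multiple of $k$ appears, yields the stated $\tfrac14 (n-1)^2 k$. One natural fix: note Theorem~\ref{thm-poincare-type-ineq} allows any $t \le \tfrac12(n-1)k$, so I can pick $t$ to optimize; but the cleanest route is probably to recognize that the quadratic form bound $Q_\eta(t)\ge 0$ for all $t \le \tfrac12(n-1)k$ should be applied with $\eta = x_i$ and then combined/summed, and to track constants so the final right-hand side matches. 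I would re-derive the sphere Laplacian identity carefully to be sure of the coefficient.

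For the strictness of the inequality, I would argue by contradiction: if equality held, then equality holds in Theorem~\ref{thm-poincare-type-ineq} for each $\eta = x_i$ (for which the corresponding term contributes), and the rigidity clause of Theorem~\ref{thm-poincare-type-ineq} forces, since here $k > 0$, that $t = 0$ and each $x_i$ is constant on $\Sigma$ --- but $t = \tfrac12(n-1)k \ne 0$ (for $n \ge 2$), a contradiction; alternatively, if one is forced to take $t$ possibly zero for some components, the $x_i$ being all constant contradicts $X$ being an immersion of a positive-dimensional $\Sigma$. Hence the inequality must be strict, proving \eqref{eq-immersion-p0-ineq}. The main obstacle I anticipate is getting the constant in the sphere version of $\dels x_i$ exactly right and reconciling the resulting coefficient $\tfrac14(n-3)^2 k$ (or whatever emerges) with the claimed $\tfrac14(n-1)^2 k$ --- this is where I would slow down and recheck, since a sign or a factor in the normal decomposition of the position vector's Laplacian is easy to misplace; everything else is a faithful transcription of the argument for Theorem~\ref{thm-immersion-0}.
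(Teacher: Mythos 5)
Your approach is exactly the paper's, and it works once the single computational error you flagged is corrected. The identity you wrote, $\dels x_i = -k\,x_i + (\vec H_{\mS})_i$, has the wrong coefficient: it should be
$$\dels x_i = (\vec H_{\mS})_i - (n-1)k\,x_i.$$
To see this, note that $\dels X = \vec H_0$, the mean curvature vector of the composite immersion $\Sigma \hookrightarrow \mS^m_k \hookrightarrow \R^{m+1}$, and $\vec H_0 = \vec H_{\mS} + \mathrm{tr}_\Sigma\bigl(\Pi_{\mS^m_k \subset \R^{m+1}}\bigr)$. The second fundamental form of $\mS^m_k$ (radius $1/\sqrt k$) in $\R^{m+1}$ is $-k\langle v,w\rangle X$, so taking the trace over the $(n-1)$-dimensional $T\Sigma$ gives the factor $-(n-1)k\,X$. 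Equivalently, one computes $\langle \vec H_0, X\rangle = -(n-1)$ directly from $\sum_i\bigl(x_i\dels x_i + |\nabs x_i|^2\bigr) = 0$ and $\sum_i |\nabs x_i|^2 = n-1$, as the paper does. With $t = \tfrac12(n-1)k$, the integrand becomes $\tfrac1H\bigl((\vec H_{\mS})_i - \tfrac12(n-1)k\,x_i\bigr)^2$; summing over $i$, the cross terms vanish since $\langle \vec H_{\mS}, X\rangle = 0$, and $\tfrac14(n-1)^2k^2\sum_i x_i^2 = \tfrac14(n-1)^2k$, giving precisely the claimed constant. Your strictness argument is fine: since $k>0$ and $t=\tfrac12(n-1)k\neq 0$, the rigidity clause of Theorem~\ref{thm-poincare-type-ineq} rules out equality for any nontrivial $\eta=x_i$, and not all $x_i$ can vanish identically because $\sum_i x_i^2 = 1/k$.
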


\begin{proof}
We identify $\mS^m_k$ with the sphere of radius $\frac{1}{\sqrt{k}}$ centered at the origin in $ \R^{m+1}$, i.e.
$ \mS^m_k = \lf\{ (y_1, \ldots, y_{m+1} ) \subset \R^{m+1} \ | \ \sum_{i=1}^{m+1} y_i^2 = \frac{1}{k}  \ri\} $
and view 
\bee
 X = (x_1, \ldots, x_{m+1}) : \Sigma \longrightarrow \mS^m_k \subset \R^{m+1}
 \eee
as  an isometric immersion of $\Sigma$ into $ \R^{m+1}$. 
Let $ \vec{H}_0$ denote  the mean curvature vector of $X: \Sigma \rightarrow \R^{m+1}$,
then
\be \label{eq-immersion-H-p0}
\vec{H}_0 = \vec{H}_{\mS} +  {k} \la \vec{H}_0 , X \ra X ,   
\ee
where $\la \cdot, \cdot \ra $ is the inner product in $\R^{m+1}$.
Apply the fact 
\be \label{eq-immersion-H-p0-xk}
\Delta_\Sigma X = \vec{H}_0 \ \
\mathrm{and} \ \
\la X, X \ra = \frac{1}{k} ,
\ee
we have 
\be \label{eq-immersion-H-p0-hx}
\begin{split}
0 = & \ \sum_{i=1}^{m+1} \lf(  x_i \Delta_\Sigma x_i  + | \nabla_\Sigma x_i |^2  \ri) \\
= & \ \la \vec{H}_0 , X \ra + (n-1) . 
\end{split}
\ee

In Theorem \ref{thm-poincare-type-ineq}, 
choose $\eta = x_i$ and $ t = \frac12 (n-1) k > 0$ in \eqref{eq-thm-bdry-ineq}, we have 
\be \label{eq-immersion-p0-bdry-ineq}
\begin{split}
 \int_\Sigma  \Pi (\nabs  x_i , \nabs  x_i ) d \sigma  
  <  & \   \int_{ \Sigma'}  \frac{1}{H}  \lf[  \dels x_i   +   \frac12 (n-1) k  x_i   \ri]^2  1_{\Sigma_i'} d \sigma   \\
\end{split}
\ee
where $ 1_{\Sigma_i'}$ is the characteristic function of the set 
$$ \Sigma_i' = \Sigma \setminus  \lf\{  \dels x_i + \frac12 (n-1) k   x_i   = 0 \ri\}  . $$ 
Summing $ \eqref{eq-immersion-p0-bdry-ineq}$ over $i$ and  
using \eqref{eq-immersion-H-p0} -- \eqref{eq-immersion-H-p0-hx},  we have 
\bee \label{eq-immersion-p0-ineq-pf}
\begin{split}
 \int_\Sigma H d \sigma  <  & \  \int_{\Sigma}  \frac{1}{H}  \lf[ | \vec{H}_0 |^2 + (n-1) k  \la \vec{H}_0, X \ra + \frac14 (n-1)^2 k^2  | X|^2 \ri]  d \sigma \\
 = & \  \int_{\Sigma}  \frac{1}{H}  \lf[ | \vec{H}_{\mS} |^2 + \frac14  (n-1)^2 k   \ri]  d \sigma ,
\end{split} 
\eee
where we have also used  \eqref{eq-immersion-0-sum-PI}. 
This  proves \eqref{eq-immersion-p0-ineq}. 
\end{proof}


\begin{thm} \label{thm-immersion-n0}
Let $(\Omega, g)$ be an $n$-dimensional, compact  Riemannian manifold 
 with  boundary $\Sigma$ which has  nonnegative mean curvature $H$.
 Let   $k>0$ be  a constant satisfying 
$$ \Ric \ge - (n-1) k g . $$
 Suppose $\Sigma$ has a component $\Sigma_0$  which admits  an isometric immersion 
 $$X = (t, x_1, \ldots, x_n) : \Sigma_0   \longrightarrow \H^{m}_{-k}  \subset \R^{m,1}, $$
 where   $ \R^{m,1}$ is the $(m+1)$-dimensional Minkowski spacetime  with $m \ge n$ and  
 $$ \H^m_{-k}  = \lf\{ (y_0, y_1, \ldots, y_m  ) \subset \R^{m, 1}  \ | \ - y_0^2 + \sum_{i=1}^{m} y_i^2 = - \frac{1}{k}, \ y_0 > 0  \ri\} .$$ 
Then 
\be  \label{eq-immersion-n0-ineq}
\begin{split}
& \ 
  \int_{\Sigma_0} H  d \sigma +   \int_{\Sigma_0} \Pi (\nabs t , \nabs t) d \sigma  \\
 <     & \  \int_{\Sigma'_0}  \frac{1}{H}   \lf\{ 
    | \vec{H}_{\H} |^2  -  \frac14 (n-1)^2 k    +  \lf[  \dels t   -  \frac12 (n-1) k  t   \ri]^2 \ri\} 
 d \sigma  ,
 \end{split} 
\ee 
where    
$\vec{H}_{\H} $ is  the mean curvature vector of the immersion $X$ into  $\H^m_{-k}$,
\be \label{eq-p-of-big-integrand}
  |\vec{H}_{\H} |^2  -  \frac14 (n-1)^2 k     +  \lf[ \dels t   -  \frac12 (n-1) k  t  \ri]^2  \ge 0  \ \ \mathrm{on} \ \Sigma_0,
  \ee
and  $\Sigma'_0 $ is the set consisting of all $x \in \Sigma_0 $ such that 
$$  
|\vec{H}_{\H} |^2 (x) -  \frac14 (n-1)^2 k   
  +  \lf[ \dels t   -  \frac12 (n-1) k  t   \ri]^2 (x) > 0  . $$ 
 \end{thm}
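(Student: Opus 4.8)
The plan is to mimic the proof of Theorem \ref{thm-immersion-p0} (the positive Ricci, spherical-immersion case) with the roles adapted to the hyperboloid $\H^m_{-k} \subset \R^{m,1}$. Write the immersion as $X = (t, x_1, \ldots, x_m) : \Sigma_0 \to \H^m_{-k} \subset \R^{m,1}$ and regard it simultaneously as a (spacelike) immersion into the flat Minkowski space $\R^{m,1}$ with the indefinite inner product $\la \cdot, \cdot \ra$. Denote by $\vec H_0$ the mean curvature vector of $X : \Sigma_0 \to \R^{m,1}$; since the hyperboloid is umbilic in $\R^{m,1}$ with the appropriate sign, one has a relation of the form $\vec H_0 = \vec H_{\H} + k \la \vec H_0, X \ra X$, and the constraint $\la X, X \ra = -\tfrac1k$ together with $\Delta_{\Sigma_0} X = \vec H_0$ yields, as in \eqref{eq-immersion-H-p0-hx}, the identity $\la \vec H_0, X \ra = n-1$ (the sign is fixed by $y_0 > 0$).

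Next I would apply Theorem \ref{thm-poincare-type-ineq} with $k$ replaced by $-k$, so that the admissible range of the parameter is $t \le \tfrac12(n-1)(-k) = -\tfrac12(n-1)k$; in particular the value $\tau := -\tfrac12(n-1)k$ is admissible. For each coordinate $y = t, x_1, \ldots, x_m$ set $\eta = y$ in \eqref{eq-thm-bdry-ineq} (with $\eta$ extended by zero off $\Sigma_0$), obtaining
\bee
\int_{\Sigma_0} \Pi(\nabs y, \nabs y)\, d\sigma < \int_{\Sigma_0} \frac{1}{H}\lf[ \dels y - \tfrac12(n-1)k\, y \ri]^2 1_{\Sigma_y'}\, d\sigma,
\eee
where the inequality is strict because equality in Theorem \ref{thm-poincare-type-ineq} with $k < 0$ is impossible. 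The left-hand side summed over the timelike coordinate $t$ and the spacelike coordinates $x_i$ must be handled with the Minkowski signature: $\sum_i \Pi(\nabs x_i, \nabs x_i) - \Pi(\nabs t, \nabs t) = H$ by the computation \eqref{eq-immersion-0-sum-PI} adapted to the hyperboloid (the $x_i$ contributions reconstruct $H$ plus the $t$-term, exactly as the $y_i$'s do in the sphere case). So summing the $x_i$-inequalities and \emph{subtracting} the $t$-inequality, rearranging, gives
\bee
\int_{\Sigma_0} H\, d\sigma + \int_{\Sigma_0} \Pi(\nabs t, \nabs t)\, d\sigma < \int_{\Sigma_0'} \frac{1}{H}\lf\{ \sum_i \lf[\dels x_i - \tau x_i\ri]^2 - \lf[\dels t - \tau t\ri]^2 \ri\}\, d\sigma + \int_{\Sigma_0} \frac{1}{H}\lf[\dels t - \tau t\ri]^2\, d\sigma,
\eee
and one must check the algebra that the bracket $\sum_i [\dels x_i - \tau x_i]^2 - [\dels t - \tau t]^2$ equals $|\vec H_{\H}|^2 - \tfrac14(n-1)^2 k$. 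This uses $\Delta_{\Sigma_0}(t, x_i) = \vec H_0$, the umbilic relation, $\la X, X\ra = -\tfrac1k$, and $\la \vec H_0, X\ra = n-1$: expanding, $\la \vec H_0 - \tau X, \vec H_0 - \tau X\ra = |\vec H_0|^2 - 2\tau(n-1) + \tau^2(-\tfrac1k)$, and since $|\vec H_0|^2 = |\vec H_{\H}|^2 + k(n-1)^2$ (from the umbilic relation and $\la\vec H_0,X\ra = n-1$), with $\tau = -\tfrac12(n-1)k$ this collapses to $|\vec H_{\H}|^2 - \tfrac14(n-1)^2 k$. Track the $1_{\Sigma_y'}$ characteristic functions: the sum of the indicator supports is contained in $\Sigma_0'$ (the set where the whole integrand is positive), which is where the restricted domain of integration comes from, exactly parallel to the argument in the proof of Theorem \ref{thm-immersion-0}. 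Finally, pointwise nonnegativity \eqref{eq-p-of-big-integrand} of the integrand is automatic since it is literally $\la \vec H_0 - \tau X, \vec H_0 - \tau X\ra$ where $\vec H_0 - \tau X$ is a normal vector to a \emph{spacelike} submanifold of $\R^{m,1}$ lying along... — more carefully, one checks $\vec H_{\H}$ is spacelike (normal to the spacelike $\Sigma_0$ inside the spacelike $\H^m_{-k}$) so $|\vec H_{\H}|^2 \ge 0$, and completing the square in $t$ shows the whole expression is a sum of squares minus $\tfrac14(n-1)^2 k$; one verifies the identity makes it a genuine Minkowski-norm-squared of a normal vector, which is $\ge 0$ because normals to a spacelike surface orthogonal to a unit timelike direction are... the cleanest route is just to observe the expression equals $\la \vec H_0 - \tau X, \vec H_0 - \tau X \ra$ and $\vec H_0 - \tau X$ is normal to the spacelike $\Sigma_0$, hence spacelike or null, hence the inner product is $\ge 0$.

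The main obstacle I expect is bookkeeping the signs and the characteristic-function/domain-restriction argument in Minkowski signature: because one inequality (the $t$-coordinate) is \emph{subtracted} rather than added, I must be careful that the strict inequalities still combine in the right direction and that the set $\Sigma_0'$ on the right genuinely contains the union of the supports of the relevant indicators. Concretely, after subtracting the $t$-inequality, its $-\Pi(\nabs t, \nabs t)$ term moves to the left as $+\Pi(\nabs t, \nabs t)$ (matching the stated left side), and its $-\tfrac1H[\dels t - \tau t]^2 1_{\Sigma_t'}$ term must be reconciled with the $+\tfrac1H[\dels t - \tau t]^2$ that appears inside the final braces — this works because on the complement of $\Sigma_t'$ the $t$-term vanishes anyway, so adding it back on all of $\Sigma_0$ versus only on $\Sigma_t'$ makes no difference, and then the combined integrand on the right is supported on $\Sigma_0'$. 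Apart from this sign/support care and the (routine but lengthy) algebraic identity for the integrand, the proof is a direct transcription of the argument for Theorem \ref{thm-immersion-p0}.
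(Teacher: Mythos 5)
Your overall plan (apply Theorem \ref{thm-poincare-type-ineq} to the coordinate functions of the immersion with $t = -\tfrac12(n-1)k$, sum, and use the algebraic identity for the sum of squares) is the paper's approach, but there are a few genuine problems in the execution.

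The most significant is the step ``summing the $x_i$-inequalities and subtracting the $t$-inequality.'' You cannot subtract same-direction inequalities: from $\int\Pi(\nabs x_i,\nabs x_i)<\int\frac1H[\cdot]_i^2$ and $\int\Pi(\nabs t,\nabs t)<\int\frac1H[\cdot]_t^2$ it does \emph{not} follow that the difference of the left sides is bounded by the difference of the right sides. The paper never invokes the Poincar\'e inequality for $\eta=t$ at all. It applies it only to the $m$ spacelike coordinates $x_1,\dots,x_m$, sums, and then reinterprets both sides via two \emph{pointwise identities}: the geometric identity $\sum_i\Pi(\nabs x_i,\nabs x_i)=H+\Pi(\nabs t,\nabs t)$ (which you correctly state, and which already produces the $+\int\Pi(\nabs t,\nabs t)$ on the left), and the algebraic identity $\sum_i[\dels x_i-\tfrac12(n-1)kx_i]^2 = |\vec H_{\H}|^2-\tfrac14(n-1)^2k + [\dels t-\tfrac12(n-1)kt]^2$. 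The $t$-term on the right comes from this identity, not from adding or subtracting a separate $t$-inequality. Once you drop the $t$-inequality altogether, the derivation is clean and the domain-of-integration bookkeeping is the same as in the proof of Theorem \ref{thm-immersion-0}.

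Two further issues. First, a chain of sign slips: the umbilic relation is $\vec H_0 = \vec H_{\H} - k\langle\vec H_0,X\rangle X$ (not $+$), the identity $\sum_i x_i\dels x_i - t\dels t + \sum_i|\nabs x_i|^2 - |\nabs t|^2 = 0$ gives $\langle\vec H_0,X\rangle = -(n-1)$ (not $+(n-1)$), and consequently $|\vec H_0|^2 = |\vec H_{\H}|^2 - (n-1)^2k$ (not $+$). With these corrected the algebraic identity collapses to $|\vec H_{\H}|^2 - \tfrac14(n-1)^2k$ exactly as the paper states; as you wrote it, the signs are internally inconsistent even though the final expression you announce is right. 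Second, your nonnegativity argument for \eqref{eq-p-of-big-integrand} via ``a normal vector to a spacelike submanifold of $\R^{m,1}$ is spacelike or null'' is false -- the normal bundle of a spacelike $(n-1)$-submanifold of $\R^{m,1}$ is Lorentzian (e.g.\ $X$ itself is a timelike normal vector). The correct and immediate argument is that the algebraic identity expresses the integrand as $\sum_{i=1}^m[\dels x_i-\tfrac12(n-1)kx_i]^2$, a sum of squares of real numbers, which is manifestly nonnegative; this is exactly what the paper does.
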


\begin{proof}
Let $ \vecHm $ be the mean curvature vector of $X: \Sigma_0 \rightarrow  \R^{m,1} $, then
\be \label{eq-immersion-H-n0}
\vecHm = \vec{H}_{\H} - {k} \la \vecHm , X \ra X , 
\ee
and
\be \label{eq-immersion-H-n0-xk}
\Delta_\Sigma X = \vecHm, \ \ 
\la X, X \ra = - \frac{1}{k} ,
\ee
where $\la \cdot, \cdot \ra = - d y_0^2 + \sum_{i=1}^m d y_i^2 $ is the  Lorentzian  product  on $\R^{m,1}$.  
By \eqref{eq-immersion-H-n0-xk}, we have 
\be \label{eq-immersion-H-n0-hx}
\begin{split}
0 = & \  - ( t  \Delta_\Sigma t + | \nabs t  |^2 ) + \sum_{i=1}^{m} \lf(  x_i \Delta_\Sigma x_i  + | \nabla_\Sigma x_i |^2  \ri) \\
= & \ \la \vecHm , X \ra + (n-1) . 
\end{split}
\ee

At any $x \in \Sigma_0$, let  $\{ v_\alpha \ | \ \alpha = 1 , \dots, n -1 \}  $ be an orthonormal frame in $T_x \Sigma_0$ 
such that $ \Pi (v_\alpha, v_\beta) = \delta_{\alpha \beta} \kappa_{\alpha} $
where $\{ \kappa_1, \dots, \kappa_{n-1} \}$ are the principal curvature of $\Sigma_0$ in $(\Omega, g)$ at $x$. 
We have
\be
 \Pi( \nabs t, \nabs t  )  =      \sum_{\alpha =1}^{n-1} \kappa_\alpha \la \p_{y_0}, v_\alpha \ra^2 
\ee
and
\be 
\begin{split}
  \sum_{i=1}^{m} \Pi( \nabs x_i, \nabs x_i )  
 = & \   \sum_{\alpha =1}^{n-1}  \kappa_{\alpha}  \lf( \sum_{i=1}^m \la \p_{y_i} , v_\alpha \ra^2 \ri) \\
= & \   \sum_{\alpha =1}^{n-1}  \kappa_{\alpha}  \lf( 1 +  \la  \p_{y_0} , v_\alpha \ra^2 \ri) .
\end{split} 
\ee
Therefore,
\be \label{eq-H-sum-PIt}
 \int_{\Sigma_0} H  d \sigma +   \int_{\Sigma_0} \Pi (\nabs t , \nabs t) d \sigma
 =   \sum_{i=1}^{m} \int_{\Sigma_0} \Pi( \nabs x_i, \nabs x_i )  d \sigma .
\ee

Now 
 choose $\eta = x_i$ on $\Sigma_0$,  $\eta = 0 $ on $\Sigma \setminus \Sigma_0$, 
and   $ t = - \frac12 (n-1) k <  0$  in Theorem \ref{thm-poincare-type-ineq}, we have 
\be \label{eq-immersion-n0-bdry-ineq}
\begin{split}
 \int_{\Sigmao}  \Pi (\nabs  x_i , \nabs  x_i ) d \sigma  
  <  & \   \int_{ {\Sigma}'_{0i} }  \frac{1}{H}  \lf[  \dels x_i   -  \frac12 (n-1) k  x_i   \ri]^2  1_{\Sigma_{0i}'} d \sigma   \\
\end{split}
\ee
where $ 1_{\Sigma_{0i}'}$ is the characteristic function of the set 
$$ {\Sigma}'_{0i} = \Sigmao \setminus  \lf\{  \dels x_i - \frac12 (n-1) k   x_i   = 0 \ri\}  . $$ 
Direct calculation   using \eqref{eq-immersion-H-n0} -- \eqref{eq-immersion-H-n0-hx} shows
\be \label{eq-pointwise-sum}
\begin{split}
& \  \sum_{i=1}^m \lf[  \dels x_i   -  \frac12 (n-1) k  x_i   \ri]^2   \\
= & \  | \vec{H}_{\H} |^2  -  \frac14 (n-1)^2 k    +  \lf[  \dels t   -  \frac12 (n-1) k  t \ri]^2  ,
\end{split}
\ee
which also proves \eqref{eq-p-of-big-integrand}.
Summing $ \eqref{eq-immersion-n0-bdry-ineq}$ over $i=1, \ldots, m $ and using \eqref{eq-H-sum-PIt} -- \eqref{eq-pointwise-sum} 
together  with the fact ${\Sigma}'_{0i}  \subset {\Sigma}'_0$,  we have 
\bee \label{eq-immersion-n0-ineq-pf}
\begin{split}
& \ 
  \int_{\Sigma_0} H  d \sigma +   \int_{\Sigma_0} \Pi (\nabs t , \nabs t) d \sigma  \\
 <     & \  \int_{\Sigma'_0}  \frac{1}{H}   \lf\{ 
    | \vec{H}_{\H} |^2  -  \frac14 (n-1)^2 k     +  \lf[  \dels t   -  \frac12 (n-1) k  t   \ri]^2 \ri\} 
 d \sigma   .
 \end{split} 
\eee
This completes the proof.
\end{proof}


\section{Rigidity results} \label{rigidity}

 Inequalities  \eqref{eq-immersion-p0-ineq} and \eqref{eq-immersion-n0-ineq} 
 are not sharp in the context of Theorem \ref{thm-immersion-p0} and Theorem \ref{thm-immersion-n0}. 
In these cases,  one wonders if there exist  sharp integral  inequalities 
involving $H$ and $ | \vec{H}_{\mS} |$ (or $ | \vec{H}_{\H} | $)
which include a rigidity statement  in the case of equality. 

In what follows,  by scaling the metric,  we assume  
 $\Ric \geq (n-1) g$ or  $\Ric\geq -(n-1 ) g $. 
In the  latter case,  the scalar curvature $R $ of $g$ satisfies  $ R  \ge - n (n-1)$.
By the results in  \cite{WangYau07, ShiTam07, Kwong}, 
 there exists  a sharp integral  inequality  relating  $H$ and $ | \vec{H}_{\H} |$   if 
 the manifold $\Omega$ is  spin and the
boundary $\Sigma$ embeds isometrically in the hyperbolic space $\mathbb{H}^n$ as a convex hypersurface.
On the other hand, 
the counterexample to Min-Oo's conjecture 
in \cite{BMN11} shows   that even the  pointwise condition   $ H = | \vec{H}_{\mS} |$  
is not sufficient to guarantee  rigidity  if  one only assumes  $R \ge n(n-1)$.
This gives rise to the  following  rigidity  question: 

\begin{ques} \label{ques-rigidity-H}
Let $\left( \Omega ,g\right)  $  be an $n$-dimensional,  compact Riemannian manifold
with boundary $\Sigma $.  Let $ D \subset \mS^n_+ $ be a bounded domain  with smooth boundary $\p D$, 
where $ \mS^n_+$ is   the standard  $n$-dimensional hemisphere.
Suppose
\begin{itemize}
\item $\Ric\geq\left(  n-1\right)  g$

\item there exists an isometry $X : \Sigma \rightarrow \p D $ 

\item  $ H \ge H_{\mS} \circ X  $, where $H$, $H_{\mS}$ are the mean curvature of $ \Sigma$, $\p D$ in 
$(\Omega, g)$, $\mS^n_+$ respectively. 
\end{itemize}

 \noindent Is $\left(  \Omega ,g\right)  $  isometric to $D$ in  $\mathbb{S}_{+}^{n}$?
\end{ques}
 
At this stage,  we do not know the  answer to Question \ref{ques-rigidity-H}. 
However,  it was shown  in \cite{HangWang09} that  Question \ref{ques-rigidity-H} 
has an affirmative answer if  the assumption $ H \ge H_{\mS} \circ X  $ is replaced by a stronger 
assumption on  the second fundamental forms.

\begin{thm} [\cite{HangWang09}] \label{thm-HangWang}
Let $\left( \Omega ,g\right)  $  be an $n$-dimensional,  compact Riemannian manifold
with boundary $\Sigma $.  Let $ D \subset \mS^n_+ $ be a bounded domain  with smooth boundary $\p D$, 
where $ \mS^n_+$ is   the standard  $n$-dimensional hemisphere.
Suppose
\begin{itemize}
\item $\Ric\geq\left(  n-1\right)  g$

\item there exists an isometry $X : \Sigma \rightarrow \p D $ 

\item  $ \Pi  \ge \Pi_{\mS} \circ X  $, where $\Pi$, $\Pi_{\mS}$ are the  second fundamental form of $ \Sigma$, $\p D$ in 
$(\Omega, g)$, $\mS^n_+$ respectively. 
\end{itemize}
Then $\left(  \Omega , g\right)  $ is isometric to $D$  in  $\mathbb{S}_{+}^{n}$. 
\end{thm}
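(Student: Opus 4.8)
The plan is to use Reilly's formula, exactly as in the computations of this paper, applied to the Euclidean coordinate functions pulled back from $\mS^n_+\subset\R^{n+1}$, followed by an Obata-type rigidity argument. Rescaling, assume $\Ric\ge(n-1)g$. Realize $D\subset\mS^n_+\subset\R^{n+1}$ and let $x_1,\dots,x_{n+1}$ be the restrictions to $\mS^n$ of the Euclidean coordinates, so that $\nabla^2 x_a+x_a g_\mS=0$ on $\mS^n$ and $\sum_a x_a^2=1$; along $\p D\subset\mS^n$ the Gauss--Codazzi identity $\dels(f|_{\p D})=\Delta_\mS f-\Hess_\mS f(\nu_\mS,\nu_\mS)-H_\mS\frac{\p f}{\p \nu_\mS}$ applied to $f=x_a$ gives
\[
\dels x_a=-(n-1)x_a-H_\mS\la e_a,\nu_\mS\ra ,
\]
where $\nu_\mS$ is the outward unit normal of $\p D$ in $\mS^n$. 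I would first dispose of the case $\lambda_1(\Omega,g)=n$: Reilly's eigenvalue theorem then forces $(\Omega,g)$ to be a hemisphere with $\Pi\equiv0$, whence $\Pi\ge\Pis$ makes $\p D$ totally geodesic in $\mS^n$, so $D=\mS^n_+$ and we are done. Otherwise $\lambda_1>n$, and for each $a$ there is a unique $u_a$ on $\Omega$ with $\Delta u_a+nu_a=0$ and $u_a=x_a\circ X$ on $\Sigma$; put $U=(u_1,\dots,u_{n+1})\colon\Omega\to\R^{n+1}$, so $U|_\Sigma=X$.

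Feeding each $u_a$ into Reilly's formula \eqref{eq-Reilly}, using $|\nabla^2 u_a|^2=\tfrac1n(\Delta u_a)^2+|\nabla^2 u_a+u_ag|^2$ and $\Ric\ge(n-1)g$, and inserting the identity for $\dels u_a$, yields for each $a$
\[
\int_\Omega|\nabla^2 u_a+u_ag|^2+\int_\Omega[\Ric-(n-1)g](\nabla u_a,\nabla u_a)+\int_\Sigma\Pi(\nabs u_a,\nabs u_a)=\int_\Sigma\lf[\lf((n-1)x_a+2H_\mS\la e_a,\nu_\mS\ra\ri)\frac{\p u_a}{\p \nu}-H\lf(\frac{\p u_a}{\p \nu}\ri)^2\ri].
\]
Writing the same identity for $D$ (where both interior integrals vanish and $\frac{\p u_a}{\p\nu}$ is replaced by $\la e_a,\nu_\mS\ra$) and subtracting, one finds after a short manipulation
\[
\int_\Omega|\nabla^2 u_a+u_ag|^2+\int_\Omega[\Ric-(n-1)g](\nabla u_a,\nabla u_a)+\int_\Sigma(\Pi-\Pis)(\nabs u_a,\nabs u_a)+\int_\Sigma(H-H_\mS)\lf(\frac{\p u_a}{\p \nu}\ri)^2+\int_\Sigma H_\mS\lf(\frac{\p u_a}{\p \nu}-\la e_a,\nu_\mS\ra\ri)^2=(n-1)\int_\Sigma x_a\lf(\frac{\p u_a}{\p \nu}-\la e_a,\nu_\mS\ra\ri).
\]
Summing over $a$ and using $\sum_a x_a^2=1$, $\sum_a\la e_a,\nu_\mS\ra^2=1$, $\sum_a x_a\la e_a,\nu_\mS\ra=\la X,\nu_\mS\ra=0$, and the trace identities $\sum_a\Pi(\nabs u_a,\nabs u_a)=H$, $\sum_a\Pis(\nabs u_a,\nabs u_a)=H_\mS$ (cf.\ \eqref{eq-immersion-0-sum-PI}), the right-hand side collapses to $(n-1)\int_\Sigma\la X,\frac{\p U}{\p \nu}\ra\,d\sigma$, while every term on the left is a nonnegative sum of squares except $\int_\Sigma H_\mS\,|\frac{\p U}{\p \nu}-\nu_\mS|^2$, which near the locus where $H_\mS$ vanishes or is negative has to be handled separately.

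The main obstacle is to show the leftover boundary term $\int_\Sigma\la X,\frac{\p U}{\p \nu}\ra\,d\sigma$ is nonpositive. Since $\la X,\frac{\p U}{\p \nu}\ra=\tfrac12\frac{\p}{\p \nu}|U|^2$ on $\Sigma$ and $|U|^2\equiv1$ there, this is equivalent to $|U|^2\ge1$ on $\Omega$, i.e.\ the vector-valued extension $U$ does not dip inside the unit sphere. This is the genuinely global step: the pointwise Reilly bookkeeping and Cauchy--Schwarz on $\Sigma$ are too lossy, and it is here that the hypothesis $\Pi\ge\Pis$ must be used in full rather than merely through $H\ge H_\mS$ --- via a maximum-principle comparison for $|U|^2$, or an auxiliary Heintze--Karcher-type estimate. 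This is precisely the point carried out in \cite{HangWang09}, and I expect it, together with the handling of $\{H_\mS\le0\}$, to be the hard part of the argument.

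Granting it, the displayed identity forces all nonnegative terms to vanish: $\nabla^2 u_a+u_ag=0$ on $\Omega$ for every $a$, $\Ric(\nabla u_a,\nabla u_a)=(n-1)|\nabla u_a|^2$, and $\frac{\p U}{\p \nu}=\nu_\mS$ on $\Sigma$. From $\nabla^2 u_a+u_ag=0$ the quantity $|U|^2+\sum_a|\nabla u_a|^2$ is constant; since $|U|^2-1$ has vanishing Dirichlet data and vanishing Neumann data on $\Sigma$ (the latter because $\frac{\p}{\p \nu}|U|^2=2\la X,\nu_\mS\ra=0$) and satisfies a homogeneous second-order elliptic equation, unique continuation gives $|U|^2\equiv1$, so $U\colon\Omega\to\mS^n$. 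The equation $\nabla^2 u_a+u_ag=0$ then says exactly that $U$ is a totally geodesic isometric immersion into $\mS^n$ (its pullback metric is parallel and agrees with $g$ along $\Sigma$), so $(\Omega,g)$ has constant sectional curvature $1$; and because $\frac{\p U}{\p \nu}=\nu_\mS$ points out of $D$, $U$ maps $\Omega$ onto the closed region bounded by $\p D$ on that side, namely $\overline D$. Finally, $U$ is a proper local isometry $\Omega\to D$ restricting on the boundary to the diffeomorphism $X\colon\Sigma\to\p D$, so the covering it defines has a single sheet and $U$ is an isometry of $(\Omega,g)$ onto $D\subset\mS^n_+$, as claimed.
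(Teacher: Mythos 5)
Your Reilly-formula bookkeeping is correct as far as it goes, and the derivation of $\dels x_a = -(n-1)x_a - H_\mS\langle e_a,\nu_\mS\rangle$, the subtraction against the $D$-identity, and the summed identity are all right. The post-rigidity argument (constancy of $|U|^2 + \sum_a|\nabla u_a|^2$, the elliptic equation $\Delta(|U|^2-1) + (2n+2)(|U|^2-1)=0$ with vanishing Cauchy data forcing $|U|^2\equiv 1$, parallelism of $\sum_a du_a\otimes du_a$, and the covering argument) is also sound. But the two difficulties you flag are genuine gaps, and they are not small. First, the term $\int_\Sigma H_\mS\,|\tfrac{\p U}{\p\nu}-\nu_\mS|^2\,d\sigma$ has no sign: nothing in the hypotheses makes $\p D$ mean-convex in $\mS^n$, and for a non-convex $D$ this term can be negative, so the identity does not directly force the nonnegative terms to vanish. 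Second, the inequality $\int_\Sigma\langle X,\tfrac{\p U}{\p\nu}\rangle\,d\sigma\le 0$, equivalently $|U|^2\ge 1$ near $\Sigma$, is the crux; since each $u_a$ satisfies $\Delta u_a = -n u_a$ rather than $\Delta u_a=0$, there is no maximum principle for $|U|^2$ available (indeed $\Delta|U|^2 + 2n|U|^2 = 2\sum_a|\nabla u_a|^2$ has the wrong-sign zeroth-order term), and I see no way to derive this from $\Pi\ge\Pi_\mS$ by the kind of local comparison you sketch. As written, the proposal reduces the theorem to an assertion at least as hard as the theorem itself.

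It is also worth pointing out that this paper does not prove Theorem \ref{thm-HangWang}; it is quoted from \cite{HangWang09}. What the paper does prove, by a genuinely different method, is the more general Theorem \ref{thm-spherical} (immersion into $\mS^m$, $m\ge n$). That proof works with a \emph{single} extended coordinate function $u$ (for a well-chosen linear function $F_\alpha$ with $\alpha=X(q)$, $q$ a boundary maximum of $\phi$), forms $\phi=|\nabla u|^2+u^2$, shows $\phi$ is subharmonic via Bochner, and computes $\tfrac{\p\phi}{\p\nu}$ at a boundary maximum using the identities \eqref{eq-id-1}--\eqref{eq-id-3}; the pointwise hypothesis $\Pi(v,v)\ge|\Pi_\mS(v,v)|$ is then exactly what makes $\tfrac{\p\phi}{\p\nu}\le 0$ at that point, and the Hopf boundary point lemma forces $\phi$ to be constant. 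This one-function maximum-principle argument is precisely what sidesteps the two obstructions above: there is no need to sum over coordinates, hence no global quantity $|U|^2$ to control, and the sign condition needed at the boundary is a pointwise one that follows directly from $\Pi\ge|\Pi_\mS|$ (and in the codimension-one case of Theorem \ref{thm-HangWang}, from $\Pi\ge\Pi_\mS$, with more care where $\chi=0$). If you want to complete a proof of Theorem \ref{thm-HangWang}, I would recommend adapting that argument, which is also the route taken in the original reference \cite{HangWang09}, rather than trying to patch the summed Reilly identity.
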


In the rest of this section,   we prove  Theorem \ref{thm-spherical-intro} and \ref{thm-hyperbolic-intro}, which are 
analogues of Theorem \ref{thm-HangWang} when  the boundary is  only  isometrically 
immersed in  a sphere or in a hyperbolic space of higher dimension.

\begin{thm} \label{thm-spherical} 
Let $\left( \Omega ,g\right)  $  be an $n$-dimensional,  compact Riemannian manifold
with boundary $\Sigma $.  Suppose 
\begin{itemize}
\item $\Ric\geq\left(  n-1\right)  g$

\item there exists  an isometric immersion $X  :  \Sigma \rightarrow\mathbb{S}^{m}$, where $ \mS^m$ is a standard  sphere of 
dimension $m \ge n $

\item $\Pi\left(  v,v \right)  \geq\left\vert \Pi_\mS \left( v ,  v \right)  \right\vert $, for any $ v \in T\Sigma$.
Here $\Pi$ is the second fundamental form of $\Sigma$  in $(\Omega, g)$ and 
 $\Pi_\mS$ is the vector-valued, second fundamental form of  the immersion $X $. 
\end{itemize}
Then $\left(  \Omega ,g\right)  $ is spherical, i.e. having constant sectional curvature $1$.  
\end{thm}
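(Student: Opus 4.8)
The plan is to run the same argument as in Theorem~\ref{thm-immersion-p0}, but this time extract the \emph{rigidity} consequences of the sharp case in Theorem~\ref{thm-poincare-type-ineq}. Concretely, identify $\mS^m$ with the unit sphere in $\R^{m+1}$ and write $X = (x_1,\dots,x_{m+1})\colon\Sigma\to\mS^m\subset\R^{m+1}$; since $\Ric\ge(n-1)g$, the scaling normalization here is $k=1$. The key pointwise identities are $\dels X = \vec H_0$ (mean curvature vector in $\R^{m+1}$), $\la X,X\ra=1$, and hence $\la\vec H_0,X\ra=-(n-1)$, together with $\vec H_0 = \vec H_{\mS} + \la\vec H_0,X\ra X = \vec H_{\mS}-(n-1)X$. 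One then applies Theorem~\ref{thm-poincare-type-ineq} with $\eta = x_i$ and $t=\tfrac12(n-1)$ (the borderline value, since $k=1$), obtaining for each $i$
\be
\int_\Sigma \Pi(\nabs x_i,\nabs x_i)\,d\sigma \le \int_\Sigma \frac{1}{H}\lf(\dels x_i + \tfrac12(n-1)x_i\ri)^2 d\sigma .
\ee
Summing over $i$ and using the identities above reproduces $\int_\Sigma H\,d\sigma \le \int_\Sigma \tfrac1H\lf(|\vec H_{\mS}|^2+\tfrac14(n-1)^2\ri)d\sigma$; the extra hypothesis $\Pi(v,v)\ge|\Pi_{\mS}(v,v)|$ is what will force this to be an \emph{equality}, not merely the strict inequality of Theorem~\ref{thm-immersion-p0}.

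The heart of the matter is showing equality actually holds. The pointwise assumption $\Pi(v,v)\ge|\Pi_{\mS}(v,v)|\ge|\la\Pi_{\mS}(v,v),N\ra|$ for any fixed unit normal direction $N$ to $\mS^m$ in $\R^{m+1}$ should give, after diagonalizing $\Pi$ and summing as in \eqref{eq-immersion-0-sum-PI}, a pointwise inequality of the form $H = \sum_i\Pi(\nabs x_i,\nabs x_i) \ge \sum_i\Pi_{\mS}$-type terms, which when combined with the Cauchy--Schwarz/quadratic estimate underlying Theorem~\ref{thm-poincare-type-ineq} pins every inequality in the chain to equality. Alternatively, and perhaps more cleanly, one feeds the hypothesis into Reilly's formula directly: with $u_i$ solving $\Delta u_i + (n-1)n\,\text{-normalized}$ eigenvalue equation extending $x_i$, the boundary term $-\Pi(\nabs\eta,\nabs\eta)$ dominated by the $\mS$-side second fundamental form is exactly what makes the Reilly integrand nonpositive on $\Omega$ while the left side is manifestly nonnegative, forcing $\nabla^2 u_i + u_i g = 0$ on $\Omega$ for all $i$. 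Once one has $m+1$ functions $u_i$ on $\Omega$ with $\nabla^2 u_i = -u_i g$ and $\sum_i u_i^2 = 1$ on $\Sigma$, a standard argument (differentiating $\sum u_i^2$ and using the Hessian equation, or invoking Obata-type rigidity) shows $(\Omega,g)$ has constant sectional curvature $1$: the map $\Phi=(u_1,\dots,u_{m+1})\colon\Omega\to\R^{m+1}$ is then an isometric immersion into $\mS^m$ with parallel, hence totally geodesic-up-to-the-sphere behavior, so its image lies in a totally geodesic $\mS^n\subset\mS^m$ and $\Omega$ is locally isometric to $\mS^n$.

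For the ``moreover'' clause, when $\Sigma$ is simply connected the immersion $\Phi$ restricted to $\Sigma$ agrees with $X$, which now has image in a totally geodesic $\mS^n$; since $\Omega$ is spherical and $\Phi$ is a local isometry into $\mS^n$, $\Phi$ is a covering map onto its image, and simple connectivity of $\Sigma$ (together with the fact that $\Sigma$ bounds $\Omega$, so $\Omega$ is simply connected as well by a Mayer--Vietoris or van Kampen argument on $\mS^n = \Omega\cup_\Sigma \Omega'$) upgrades it to a diffeomorphism onto a domain in $\mS^n_+$ — here $\mS^n_+$ rather than all of $\mS^n$ because the hemisphere case $\lam_1 = nk$ of Reilly's Dirichlet eigenvalue theorem (used to guarantee solvability of the extension problem) is precisely the excluded degenerate case, so $\Omega$ cannot contain a full geodesic sphere of $\mS^n$. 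This last topological packaging mirrors the end of the proof of Theorem~\ref{thm-immersion-0} and of \cite{HangWang07, HangWang09}.

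The step I expect to be the main obstacle is making the passage from the pointwise condition $\Pi(v,v)\ge|\Pi_{\mS}(v,v)|$ to the equality in the integral inequality fully rigorous — in particular handling the vector-valued nature of $\Pi_{\mS}$ (it takes values in the normal bundle of $X$ in $\mS^m$, which varies from point to point) and verifying that summing over the coordinate functions $x_i$ correctly reconstitutes $|\vec H_{\mS}|^2$ rather than a larger quantity. One must check carefully that $\sum_i \Pi_{\mS}(\nabs x_i, \nabs x_i)$, interpreted appropriately, equals (the tangential-to-$\mS^m$ part of) $\vec H_0$ paired back against itself, so that the hypothesis on $\Pi$ exactly saturates the quadratic-form inequality $Q_{x_i}(t)\ge 0$ at $t = \tfrac12(n-1)$ after summation. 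Once equality is secured, the rigidity extraction from Theorem~\ref{thm-poincare-type-ineq} and the Obata-type conclusion are essentially routine.
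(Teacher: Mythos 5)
Your proposal has a genuine gap, and it takes a route that cannot succeed as written. The central claim — that the hypothesis $\Pi(v,v)\ge|\Pi_{\mS}(v,v)|$ forces \emph{equality} in the integral inequality \eqref{eq-thm-bdry-ineq} with $\eta=x_i$, $k=1$, $t=\tfrac12(n-1)$ — is ruled out by the rigidity clause of Theorem~\ref{thm-poincare-type-ineq} itself: for $k>0$ and $t\neq 0$, equality can occur only when $\eta$ is constant, which coordinate functions on a sphere never are. So the inequality in Theorem~\ref{thm-immersion-p0} is \emph{always strict} in this setting, and no hypothesis on $\Pi$ can undo that. Moreover, your alternative route through Reilly's formula would require the pointwise bound $H^2\ge|\vec H_{\mS}|^2+\tfrac14(n-1)^2$ in order to make the boundary integrand nonpositive after summing the coordinate functions; but $\Pi(v,v)\ge|\Pi_{\mS}(v,v)|$ only yields $H\ge|\vec H_{\mS}|$, not this quantitatively stronger bound (take $\Pi_{\mS}=0$: the hypothesis then says only $\Pi\ge 0$, so $H$ can be as small as one likes). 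Thus the ``equality is forced'' mechanism simply is not there, and one cannot extract $\nabla^{2}u_i+u_ig=0$ this way.

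What the paper actually does is a pointwise maximum principle argument that makes essential use of the freedom in choosing the linear height function. For each $q\in\Sigma$ one takes $\alpha=X(q)$, sets $f=F_\alpha\circ X$, extends $f$ to $u$ on $\Omega$ by $\Delta u+nu=0$, and considers the single auxiliary function $\phi=|\nabla u|^2+u^2$. This $\phi$ is subharmonic, and the carefully computed normal derivative (Lemma~\ref{lem-phi-nu}) at a boundary maximum point $p$ is shown to be $\le 0$ by using the hypothesis $\Pi(v,v)\ge|\Pi_{\mS}(v,v)|$ \emph{pointwise} at $p$, together with the inequality $\chi^2(p)\ge|\overline{\nabla}^\perp F|^2(p)$ that follows from choosing $\alpha=X(q)$ (so $\phi(q)=1+\chi^2(q)$). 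The Hopf boundary point lemma then forces $\phi\equiv\text{const}$, whence $\nabla^2u=-ug$, and after a short case analysis the constant-curvature conclusion is obtained by an exponential-map computation along geodesics emanating from nearest boundary points. The critical ideas you are missing are (i) the role of the subharmonic function $\phi$ and its boundary normal derivative, (ii) the point-dependent choice $\alpha=X(q)$ that pins $\phi(q)$, and (iii) the fact that the hypothesis on $\Pi$ is exploited pointwise at a max point rather than integrally over $\Sigma$.

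One further remark: the statement you are asked to prove is only ``$(\Omega,g)$ is spherical''; the ``moreover'' clause about simply connected $\Sigma$ is the content of the separate Corollary~\ref{cor-simple-bdry-s}, which requires the Gauss–Codazzi argument and the do~Carmo–Warner convexity theorem, not the topological packaging you sketch.
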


We divide the proof of  Theorem \ref{thm-spherical} into a few steps. First,  we fix some notations. 
Let $ \overline{\nabla} $  denote the covariant derivative on $\mS^m$, which is identified with 
 the  unit sphere centered at the origin  in $ \R^{m+1}$. 
 Given any  $\alpha = (\alpha_1, \ldots, \alpha_{m+1})  \in \mathbb{S}^{m}$, let  
 $ F = F_\alpha $ be the restriction of the linear function
 $  \la \alpha, x \ra = \alpha_1 x_{1}+\cdots+\alpha_{m+1}x_{m+1}$ to $ \mS^m$. 
 The gradient of $F$ on $ \mS^m$, denoted by $ \overline{\nabla} F$,   is  
\be \label{eq-grad-F}
\overline{\nabla}F\left(  x\right)  =\alpha-\left\langle \alpha,x\right\rangle
x, \ x \in \mS^m.
\ee
On $\Sigma$,  define  $f  =  F \circ X  $.
For simplicity,  given  any  $ p \in \Sigma$,  we  let $\overline{\nabla}^{\perp}F $ be  the component
of $\overline{\nabla}F  ( X  (p) )  $ 
normal to $X _* \left(  T_{p}\Sigma\right)  $. 
Given $v,w \in T_{p}\Sigma$, recall that 
 $\Pi_\mS \left( v , w\right)  =\left(  \overline{\nabla}_{X _* (v)}   X _* (w) 
\right)  ^{\bot}$ 
is the component of $  \overline{\nabla}_{X _* (v)}   X _* (w)  $ normal to $X _{\ast}\left(  T_{p}\Sigma\right)  $.
We let   $ \vec{H}_{\mS}$ denote  the mean curvature vector of  $X $, which is the trace of $\Pi_\mS$.

\begin{lemma} \label{lem-identities}  
Along $\Sigma$, one has
\begin{align}
f^{2}+\left\vert \nabs f\right\vert ^{2}+\left\vert \overline{\nabla
}^{\perp}F \right\vert ^{2}  &  =1,  \label{eq-id-1}\\
\dels f + \left(  n-1\right)  f-\left\langle \vec{H}_{\mS}  ,\overline{\nabla}
^{\perp}F  \right\rangle  &  = 0 , \label{eq-id-2} \\
\left\langle \overline{ \nabla}_{X _* ( \nabs f) } \overline{\nabla}^{\perp}F  ,  \vec{n} \right \rangle
 +\left\langle \Pi_{\mS} \left(  \nabs f ,  \nabs f \right)  , \vec{n}  \right\rangle  &  =0. \label{eq-id-3}
\end{align}
Here $ \vec{n}$ is any vector that is normal to $ X  (\Sigma) $ in $\mS^m$. 

\end{lemma}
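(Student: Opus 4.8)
The plan is to derive all three identities by differentiating the linear function $F_\alpha$ restricted to $\mathbb{S}^m$ and then pulling back along the isometric immersion $X$. The starting point is the formula \eqref{eq-grad-F}, $\overline{\nabla}F(x) = \alpha - \langle \alpha, x\rangle x$, together with the standard fact that the Hessian of a linear function on the unit sphere is $\overline{\nabla}^2 F = -F\,\bar g$, equivalently $\overline{\nabla}_Y \overline{\nabla}F = -F\,Y$ for every tangent vector $Y$ to $\mathbb{S}^m$. All three identities are then purely algebraic consequences of these two facts once one decomposes vectors into components tangent and normal to $X_*(T_p\Sigma)$.

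For \eqref{eq-id-1}, I would simply take the squared norm of $\overline{\nabla}F(X(p)) = \alpha - f\, X(p)$ in $\mathbb{R}^{m+1}$: since $|\alpha|^2 = 1$, $|X(p)|^2 = 1$ and $\langle \alpha, X(p)\rangle = f$, one gets $|\overline{\nabla}F|^2 = 1 - f^2$. Decomposing $\overline{\nabla}F = \nabla_\Sigma f + \overline{\nabla}^\perp F$ orthogonally (the tangential part being exactly $\nabla_\Sigma f$ because $f = F\circ X$) yields $|\nabla_\Sigma f|^2 + |\overline{\nabla}^\perp F|^2 = 1 - f^2$, which is \eqref{eq-id-1}. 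For \eqref{eq-id-2}, I would compute $\Delta_\Sigma f$ by taking the trace over an orthonormal frame $\{v_\alpha\}$ of $T_p\Sigma$ of the second covariant derivatives; using $\overline{\nabla}_{X_*(v_\alpha)}\overline{\nabla}F = -f\, X_*(v_\alpha)$ and the Gauss formula $\overline{\nabla}_{X_*(v_\alpha)}X_*(v_\alpha) = X_*(\nabla_{v_\alpha}v_\alpha) + \Pi_\mS(v_\alpha,v_\alpha)$, the tangential part produces $\Delta_\Sigma f$ and the normal part produces $\langle \vec{H}_\mS, \overline{\nabla}^\perp F\rangle$, giving $\Delta_\Sigma f = -(n-1)f + \langle \vec{H}_\mS, \overline{\nabla}^\perp F\rangle$. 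For \eqref{eq-id-3}, I would differentiate the relation $\overline{\nabla}F = \nabla_\Sigma f + \overline{\nabla}^\perp F$ in a tangential direction $X_*(\nabla_\Sigma f)$, again use $\overline{\nabla}_{X_*(\nabla_\Sigma f)}\overline{\nabla}F = -f\,X_*(\nabla_\Sigma f)$ (which is tangential), and pair with a vector $\vec n$ normal to $X(\Sigma)$; the tangential piece drops out, and what remains is $\langle \overline{\nabla}_{X_*(\nabla_\Sigma f)}\overline{\nabla}^\perp F, \vec n\rangle + \langle \Pi_\mS(\nabla_\Sigma f, \nabla_\Sigma f), \vec n\rangle = 0$.

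The main thing to be careful about — more a bookkeeping obstacle than a conceptual one — is keeping the tangential/normal decomposition consistent and making sure the derivative $\overline{\nabla}_{X_*(v)}\overline{\nabla}^\perp F$ is interpreted correctly: $\overline{\nabla}^\perp F$ is only defined along $X(\Sigma)$, so its ambient covariant derivative splits as $\overline{\nabla}_{X_*(v)}(\overline{\nabla}F - X_*(\nabla_\Sigma f))$, and one must track which Gauss/Weingarten terms land in the tangent space and which in the normal bundle. Pairing against an arbitrary normal $\vec n$ in \eqref{eq-id-3} is precisely the device that kills the ambiguous tangential terms, so the identity is clean once the decomposition is set up. I expect no genuine difficulty here; these are the sphere analogues of the well-known identities for coordinate functions in Euclidean space used in the proof of Theorem \ref{thm-immersion-0}, and the computation is routine.
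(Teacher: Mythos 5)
Your proposal is correct and takes essentially the same route as the paper: all three identities come from $\overline{\nabla}F(x)=\alpha-\langle\alpha,x\rangle x$, the Hessian identity $\overline{\nabla}^2 F=-F g_{_\mS}$, and the tangent/normal split of $\overline{\nabla}F$ along $X(\Sigma)$, with \eqref{eq-id-2} obtained by tracing the pulled-back Hessian and \eqref{eq-id-3} by pairing $\overline{\nabla}^2 F\bigl(X_*(\nabs f),\vec n\bigr)=0$ against a normal vector. The only cosmetic difference is that the paper records the intermediate pointwise Hessian identity $\nabs^2 f(v,w)-\langle\Pi_\mS(v,w),\onab^\perp F\rangle=-f\langle v,w\rangle$ before tracing, whereas you trace directly in a frame; the content is the same.
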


\begin{proof}
\eqref{eq-id-1} follows from the fact $F^{2}+\left\vert \overline{\nabla
}F\right\vert ^{2}=1$. 
To show \eqref{eq-id-2} and \eqref{eq-id-3}, we note that  $F$ on $\mS^m$ satisfies 
\be  \label{eq-hessian-F} 
\overline{\nabla}^{2}F=-Fg_{_\mS} ,
\ee
where $g_{_\mS}$ is the standard metric on $\mS^m$. 
 \eqref{eq-hessian-F} readily implies
 \be \label{eq-hessian-f}
 \nabs^2 f (v, w) - \la  \Pis( v, w ) ,  \onab^\perp F \ra = - f \la v, w \ra  , \ \ \forall  \ v, w \in T\Sigma,
 \ee
where $\nabs^2 $ denotes the Hessian on $\Sigma$. Taking trace of \eqref{eq-hessian-f} gives \eqref{eq-id-2}.
\eqref{eq-hessian-F} also implies , 
\begin{align*}
0 &  = \overline{\nabla}^{2}F\left( X _* (  \nabs f )  , \vec{n} \right)  \\
&  = \left\langle \overline{ \nabla}_{X _* ( \nabs f) } \overline{\nabla}^{\perp}F  , \vec{n}  \right\rangle 
+\left\langle \Pi_{\mS} \left(  \nabs f ,  \nabs f \right)  , \vec{n}  \right\rangle ,
\end{align*} 
which proves \eqref{eq-id-3}.
\end{proof}

The condition   $ \Pi (v, v)  \ge | \Pi_{\mS} (v, v)  |$, $ \forall \ v \in T\Sigma$, 
implies $H \ge  | \mHs| \ge 0 $. 
 By Reilly's theorem  (\cite[Theorem 4]{Reilly77}),   to prove Theorem \ref{thm-spherical}, it 
suffices to assume  $\lambda_{1}  >n$, where $\lambda_1$ is the first Dirichlet eigenvalue of  $(\Omega, g)$. 
Under this assumption, we  let $u  $ be the unique  solution to %
\be \label{eq-def-u}
\left\{
\begin{array}{rcl}
 \Delta u + n u & = &   0  \ \  \text{on} \  \Omega,\\
u & = & f  \  \ \text{at} \ \Sigma.
\end{array}
\right.
\ee
On $(\Omega, g)$, define 
$$ \phi=\left\vert \nabla u\right\vert ^{2}+u^{2} .$$
 A basic fact about  $\phi$ is that it is subharmonic, which follows from  
\begin{equation} 
\frac{1}{2}\Delta\phi=  \left\vert \nabla^{2}u+ug\right\vert ^{2}+\mathrm{Ric}(\nabla
u,\nabla u)-\left(  n-1\right)  \left\vert \nabla u\right\vert ^{2} \ge 0 . \label{Bo} \\
\end{equation}
 On $\Sigma$,   define $ \displaystyle \chi=\frac{\partial u}{\partial\nu}$,
 where $\nu$ is the unit outward normal to $\Sigma $ in $(\Omega, g)$. 
  Then  
\begin{equation} \label{eq-phi-sigma}
\phi|_{\Sigma}=\left\vert \nabla_{\Sigma}f\right\vert ^{2}+\chi^{2}%
+f^{2}=1+\chi^{2}-\left\vert \overline{\nabla}^{\perp}F\right\vert ^{2} %
\end{equation}
by \eqref{eq-id-1} in Lemma \ref{lem-identities}. 

\begin{lemma} \label{lem-phi-nu}
Along $\Sigma$,  the normal derivative of $\phi$ is given by 
\[
\frac{1}{2}\frac{\partial\phi}{\partial\nu}=\left\langle \nabs f,\nabs \chi\right\rangle -\Pi\left(
\nabs f,\nabs f\right)  -H\chi^{2}  -\left\langle \vec{H}_{\mS} %
,\overline{\nabla}^{\perp}F\right\rangle \chi.
\]
\end{lemma}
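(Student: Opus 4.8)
The plan is to differentiate $\phi=\left\vert \nabla u\right\vert ^{2}+u^{2}$ in the outward normal direction and then express every resulting term through boundary data, using the equation $\Delta u+nu=0$, the boundary condition $u|_{\Sigma}=f$, and the identities of Lemma \ref{lem-identities}. First I would write, along $\Sigma$,
\[
\frac12\frac{\partial\phi}{\partial\nu}=\nabla^{2}u(\nu,\nabla u)+u\frac{\partial u}{\partial\nu}=\nabla^{2}u(\nu,\nabla u)+f\chi ,
\]
and split $\nabla u|_{\Sigma}=\nabs f+\chi\,\nu$, so that $\nabla^{2}u(\nu,\nabla u)=\nabla^{2}u(\nu,\nabs f)+\chi\,\nabla^{2}u(\nu,\nu)$. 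It then remains to evaluate the two Hessian contractions on the right.

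For the purely normal term I would invoke the standard decomposition of the Laplacian along a hypersurface, namely $\Delta u=\nabla^{2}u(\nu,\nu)+H\chi+\dels f$ on $\Sigma$, which follows from the Gauss formula relating the ambient and induced connections together with $\nabla^{2}u(e_{\alpha},e_{\alpha})=\nabs^{2}f(e_{\alpha},e_{\alpha})+\Pi(e_{\alpha},e_{\alpha})\chi$ for a local frame $\{e_{\alpha}\}$ tangent to $\Sigma$. Substituting $\Delta u=-nu=-nf$ and then eliminating $\dels f$ by means of \eqref{eq-id-2} gives
\[
\nabla^{2}u(\nu,\nu)=-f-H\chi-\left\langle \vec{H}_{\mS},\overline{\nabla}^{\perp}F\right\rangle .
\]
For the mixed term I would use the symmetry of $\nabla^{2}u$ together with $\nu(u)|_{\Sigma}=\chi$: for $e_{\alpha}$ tangent to $\Sigma$, $\nabla^{2}u(\nu,e_{\alpha})=e_{\alpha}(\nu(u))-(\nabla_{e_{\alpha}}\nu)(u)=\left\langle \nabs\chi,e_{\alpha}\right\rangle-\Pi(e_{\alpha},\nabs f)$, since $\nabla_{e_{\alpha}}\nu$ is the shape operator applied to $e_{\alpha}$ and acts on $u$ by $\Pi(e_{\alpha},\nabs f)$. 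Contracting with $\nabs f$ yields $\nabla^{2}u(\nu,\nabs f)=\left\langle \nabs f,\nabs\chi\right\rangle-\Pi(\nabs f,\nabs f)$.

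Substituting both identities into the first display, the $f\chi$ produced by $u\,\partial_{\nu}u$ cancels the $-f\chi$ coming from $\chi\,\nabla^{2}u(\nu,\nu)$, and what is left is exactly
\[
\frac12\frac{\partial\phi}{\partial\nu}=\left\langle \nabs f,\nabs\chi\right\rangle-\Pi(\nabs f,\nabs f)-H\chi^{2}-\left\langle \vec{H}_{\mS},\overline{\nabla}^{\perp}F\right\rangle\chi ,
\]
which is the claimed formula. I do not anticipate any genuine obstacle; the only points requiring care are the sign conventions for $\Pi$ and the shape operator and the verification that \eqref{eq-id-2} is precisely what is needed to clear the stray $f$ and $\dels f$ terms. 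Note in particular that identity \eqref{eq-id-3} of Lemma \ref{lem-identities}, although proved there, plays no role in this computation and will be used elsewhere in the argument.
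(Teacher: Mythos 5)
Your proof is correct and follows essentially the same computation as the paper: differentiate $\phi$ along $\nu$, split $\nabla u|_\Sigma=\nabs f+\chi\nu$, evaluate $\nabla^2u(\nu,\nabs f)$ via the shape operator to get $\la\nabs f,\nabs\chi\ra-\Pi(\nabs f,\nabs f)$, and evaluate $\nabla^2u(\nu,\nu)$ by combining the boundary decomposition of $\Delta u$ with the PDE $\Delta u=-nu$ and identity \eqref{eq-id-2}. One small remark: the paper's proof cites \eqref{eq-id-1} at the analogous step, but the identity actually used there is \eqref{eq-id-2}, so your reading of which identity is doing the work is the correct one.
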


\begin{proof}
Direct calculation gives  
\be \label{eq-phi-nu}
\begin{split}
\frac{1}{2}\frac{\partial\phi}{\partial\nu}  &  = \nabla^{2}u\left(  \nabla
u,\nu\right)  +f\chi\\
&  = \nabla^{2}u\left(  \nabs f,\nu\right)  +\chi\left[  \nabla^{2}u\left(
\nu,\nu\right)  +f\right] \\
&  = \left\langle \nabs f,\nabs \chi\right\rangle -\Pi\left(\nabs f,\nabs f\right) +\chi\left[  \nabla^{2}u\left(
\nu,\nu\right)  +f\right] .
\end{split}
\ee
By   \eqref{eq-id-1} and   \eqref{eq-def-u}, at $\Sigma$  we have 
\begin{align*}
-nf=\Delta u  &=\Delta_{\Sigma}f+H\chi+\nabla^{2}u\left(  \nu,\nu\right) \\
 & =-\left(  n-1\right)  f+\left\langle \vec{H}_{\mS},\overline{\nabla
}^{\perp}F\right\rangle +H\chi+\nabla^{2}u\left(  \nu,\nu\right)  ,
\end{align*}
which gives 
\be \label{eq-bdry-lap}
\nabla^{2}u\left(  \nu,\nu\right)  +f=-\left\langle \vec{H}_{\mS} ,\overline{\nabla}^{\perp}F\right\rangle -H\chi . 
\ee
The lemma follows from \eqref{eq-phi-nu} and \eqref{eq-bdry-lap}. 
\end{proof}

\vspace{0.1cm}

\begin{proof}[Proof of Theorem \ref{thm-spherical}]
Given any  $q \in \Sigma$, 
we choose   $\alpha = X  (q) \in \mS^m $. 
Then
  $ \overline{\nabla} F ( X (q) ) = 0 $ by \eqref{eq-grad-F}. 
Hence, 
\be \label{eq-phi-q}
 \nabs f (q) = 0 ,  \ \overline{\nabla}^\perp F (q) = 0 \ \ \mathrm{and} \ \  \phi (q) = 1 + \chi^2 (q) . 
\ee
Consider    $ p \in \Sigma$ such that 
$\phi (p) = \max_\Omega \phi  $. By
\eqref{eq-phi-sigma} and \eqref{eq-phi-q},   
\be \label{eq-chi-grad}
\chi^{2}   (p) \geq\left\vert  \onab^{\perp}F\right\vert ^{2} (p) .
\ee
Since $ \la \nabs f , \nabs  \phi \ra (p) = 0 $,  taking $\vec{n} = \onab^\perp F$ in \eqref{eq-id-3}, 
at $p$ we  have 
\be \label{eq-nabs-phi}
\begin{split}
\chi\left\langle \nabs f,\nabs \chi\right\rangle  &
=\left\langle \onab_{ X _* ( \nabs f) }\overline{\nabla}^{\perp}F,\overline{\nabla
}^{\perp}F\right\rangle \\
&  =-\left\langle \Pis \left(  \nabs f,\nabs f\right)
,\onab^{\perp}F\right\rangle  . 
\end{split}
\ee
If $\chi\left(  p\right)  \neq0$, it follows from Lemma \ref{lem-phi-nu}, \eqref{eq-chi-grad} and \eqref{eq-nabs-phi} that 
\be \label{eq-sign-1}
\begin{split}
\frac12 \frac{\partial\phi}{\partial\nu} (p) = & -\frac{1}{\chi}\left\langle \Pis \left(
\nabs f,\nabs f\right)  ,\onab^{\perp}F\right\rangle -\Pi\left(  \nabs f,\nabs f\right) \\
& \   -\left\langle \mHs,\overline{\nabla}^{\perp}F\right\rangle
\chi-H\chi^{2} \\
  \leq & \  \left\vert \Pis \left(  \nabs f,\nabs f\right)
\right\vert -\Pi\left(  \nabs f,\nabs f\right)   +\left(
\left\vert \mHs \right\vert -H\right)  \chi^{2}\\
  \leq & \ 0 .
\end{split}
\ee
If $\chi\left(  p\right)  =0$, then Lemma \ref{lem-phi-nu} gives 
\be \label{eq-chi-0-pnu}
\frac12 \frac{\partial\phi}{\partial\nu} (p) = \la \nabs f , \nabs \chi \ra - \Pi (\nabs f, \nabs f ).
\ee
Moreover, 
 $ \onab^\perp F (p) = 0 $ by \eqref{eq-chi-grad}. 
Taking the second order derivative of $\phi$ along $\nabs f $ at $p$, we have
\be \label{eq-2nd-d}
\begin{split}
0 \ge & \  \frac12 \nabs f ( \nabs f ( \phi ) )   (p) \\
= & \  \nabs f \lf( \chi \la \nabs f , \nabs \chi \ra - \la \onab_{X _*( \nabs f) }
\onab^\perp F, \onab^\perp F \ra \ri) \\
= & \   \la \nabs f , \nabs \chi \ra^2  -  \lf|  \onab_{X _*( \nabs f) }
\onab^\perp F \ri|^2 .
\end{split}
\ee
We claim, at $p$, 
 \be \label{eq-equal-pi}
   ( \onab_{ X _* ( \nabs f) }\overline{\nabla}^{\bot}F )  =- \Pis \left(  \nabs f,\nabs f\right) .
 \ee 
To see this,  take  any $v \in T\Sigma$,  \eqref{eq-hessian-F} and \eqref{eq-phi-sigma} imply 
\be \label{eq-no-t}
\begin{split}
& \ \left\langle \onab_{ X _* ( \nabs f) }\overline{\nabla}^{\bot}F, X _* (v)  \right\rangle \\
= & \ \onab^2 F ( X _* (\nabs f),  X _* (v)   )   - \left\langle \onab_{ X _* ( \nabs f) } X _* (\nabs f) , X _* (v)  \right\rangle \\
= & \ - f v (f) -  \nabs^2 f ( \nabs f, v)  \\
= & \ -\frac{1}{2} v \left(  f^{2}+\left\vert \nabs f\right\vert ^{2}\right) 
  = -\frac{1}{2}v \left(  \phi-\chi^{2}\right)    . 
\end{split}
\ee
Clearly,  $ v \left(  \phi -\chi^{2}\right) $ varnishes at $p$.  Hence,  
 $ ( \onab_{ X _* ( \nabs f) }\overline{\nabla}^{\bot}F ) (p) $ is normal  to $X _* ( T_p \Sigma) $. 
This, together with \eqref{eq-id-3}, implies \eqref{eq-equal-pi}. 
Now it follows from  \eqref{eq-chi-0-pnu}, \eqref{eq-2nd-d} and \eqref{eq-equal-pi} that   
\bee \label{eq-sign-2}
\frac{1}{2}\frac{\partial\phi}{\partial\nu} (p) \leq\left\vert \Pis \left(
\nabla_{\Sigma}f,\nabs f\right)  \right\vert -\Pi\left(
\nabla_{\Sigma}f,\nabs f\right)  \leq0 .
\eee

By  the strong maximum principle (precisely the Hopf boundary point lemma),  we conclude that  $\phi$ must be
a constant.  Hence,  
$  \nabla^{2}u=-ug  $  by (\ref{Bo}).    
Moreover, by \eqref{eq-phi-sigma} and  \eqref{eq-phi-q}, 
\be  \label{eq-constant-c}
\chi^{2}-\left\vert \overline{\nabla}^{\perp}F\right\vert ^{2} = c 
\ee
for some  constant $c \ge 0$. 
We have the following two cases: 

If  $c>0$,   then  $\chi^{2} > \vert \onab^{\perp}F \vert^{2} \ge 0 $. This together with 
 \eqref{eq-sign-1} and the fact $\phi$ is a constant   implies 
$\vert \mHs \vert =H=0 .$
Since $\Pi\geq0$, we have  $\Pi=0$ and   $\Pis = 0 $. 
Thus, $ X  : \Sigma \rightarrow \mS^m$ is totally geodesic. Hence  $X  (\Sigma) $  lies in an $(n-1)$-dimensional 
standard sphere $\mS^{n-1} \subset \mS^m$.  Since $X : \Sigma \rightarrow \mS^{n-1}$ is an isometric immersion,  
we have  $ X  (\Sigma) = \mS^{n-1}$; moreover $X : \Sigma \rightarrow \mS^{n-1}$ is one-to-one
as $\mS^{n-1}$ is simply connected. Therefore, $\Sigma$ is isometric to $\mS^{n-1}$ and is totally geodesic in $(\Omega, g)$.
By \cite[Theorem 2]{HangWang09}, we conclude that $(\Omega, g)$ is isometric to a standard hemisphere $\mS^n_+$. 

If $c = 0 $, then 
  $\phi = 1 $ on $\Sigma$ (and hence on $\Omega$).  
In this case, along  $\Sigma$,  
\[
\left\vert \nabla u\right\vert ^{2}=\left\vert \nabs f\right\vert
^{2}+\chi^{2}=\left\vert \nabs   f\right\vert ^{2}+\left\vert
\onab^{\perp}F\right\vert ^{2}=\left\vert \overline{\nabla }F\right\vert ^{2 } \circ X  .
\]
In particular,  $\nabla u\left(  q \right) =0$ by \eqref{eq-phi-q}. 
We also note that  $u\left(  q \right) = f(q) = 1$ by the definition of $F$. 

Finally, we are in a position  to show $(\Omega, g)$ has constant sectional curvature $1$. 
It suffices to assume  $(\Omega, g)$ is not isometric to $\mS^n_+$.
Given any $x $ in the interior of  $\Omega$, let $q_x \in \Sigma$  such that
$ dist(x, \Sigma) = dist (x, q_x) $, where $ dist(\cdot, \cdot)$ denotes the distance functional on $(\Omega,g )$.
Consider the function $f$ and $u$ constructed in the above proof by taking $q = q_x$. 
Since $(\Omega, g)$ is  not isometric to $\mS^n_+$, 
 the constant $c$ in \eqref{eq-constant-c} must be  $0$, hence   $u$ satisfies 
\be \label{eq-ux}
\nabla^2 u  = - u g , \ \nabla u (q_x) = 0 , \ \ u(q_x) = 1. 
\ee
Let $ \gamma : [0, L] \rightarrow (\Omega, g)$ be the geodesic satisfying  
$ \gamma (0) = q_x$, $ \gamma (L) = x $ and $ L = dist (x, \Sigma)$. 
Let $ \xi = \gamma'(0) $.  Given any constant $l \in  (0,  L)$,  there exists an open neighborhood $W$ of $ \xi$ in $\mS^{n-1}$ 
 such that the exponential map $ exp_{q_x} (\cdot, \cdot) $ is a diffeomorphism from  $(0, l) \times W \subset \R^+ \times \mS^{n-1} $ 
 onto its image in $(\Omega, g)$. 
 Now it is a standard fact that \eqref{eq-ux} implies
 \be
 (\exp_{q_x})^* ( g) = dr^2 + (\sin r)^2 g_{_{\mS^{n-1}}} 
 \ee
 on $ (0, l) \times W $,  where $g_{_{\mS^{n-1}}}$ is the standard metric on $\mS^{n-1}$
(cf.  \cite{HangWang09} for details). Therefore,  $g$  has constant
sectional curvature $1$ at $\gamma (t )$ for any $t < L$.  By continuity,  $g$ has constant sectional 
curvature $1$ at $x $. 
This completes the proof that $\Omega$ is spherical.
\end{proof}


As an application, we have the following rigidity result which is 
a spherical analogue of  \cite[Theorem 1]{HangWang07}. 

\begin{cor} \label{cor-simple-bdry-s}
 Let $\left( \Omega ,g\right)  $  be an $n$-dimensional,  compact Riemannian manifold
with boundary $\Sigma $. Suppose  
\begin{itemize}
\item  $\Ric \ge (n-1) g $ 
\item  $g$ has constant sectional curvature $1$ at every point on $\Sigma$. 
\end{itemize}
If $\Sigma$ is simply connected with nonnegative second fundamental form $\Pi$,
 then $(\Omega, g)$ is isometric to a domain in $\mathbb{S}^n_+$. 
\end{cor}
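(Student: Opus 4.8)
The plan is to reduce Corollary~\ref{cor-simple-bdry-s} to Theorem~\ref{thm-spherical-intro} (applied with $m=n$) by manufacturing an isometric immersion of $\Sigma$ into $\mS^n$ whose vector-valued second fundamental form, viewed as a normal-vector-valued quadratic form, has the same length as the scalar second fundamental form $\Pi$ that $\Sigma$ already carries in $(\Omega,g)$. Once such an immersion is in hand, the three hypotheses of Theorem~\ref{thm-spherical-intro} are all in force, and simple connectivity of $\Sigma$ gives exactly the asserted conclusion.

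First I would record the pointwise meaning of the curvature hypothesis. Since every sectional curvature of $(\Omega,g)$ at a point $p\in\Sigma$ equals $1$, a standard pointwise algebraic argument (polarization) shows that the curvature tensor of $g$ at $p$ is the curvature tensor of a space form of curvature $1$. Two consequences follow along $\Sigma$, where I write $\gamma$ and $\Pi$ for the induced metric and second fundamental form: (i) the Gauss equation of $\Sigma\subset(\Omega,g)$ becomes precisely the Gauss equation for a hypersurface of $\mS^n$, relating the intrinsic curvature of $(\Sigma,\gamma)$ to the model tensor together with the quadratic terms in $\Pi$; and (ii) the normal component $\bar{R}(\cdot,\cdot,\cdot,\nu)$ of the ambient curvature vanishes, so the Codazzi equation reduces to the statement that $\Pi$ is a Codazzi tensor on $(\Sigma,\gamma)$, which is exactly the Codazzi equation for a hypersurface of $\mS^n$. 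Since $\Sigma$ is simply connected, the fundamental theorem of hypersurfaces in space forms then yields an isometric immersion $X\colon\Sigma\to\mS^n$ whose second fundamental form is $\Pi$.

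For this $X$, writing $\vec{\nu}$ for a local unit normal of $X(\Sigma)$ in $\mS^n$, the vector-valued second fundamental form is $\Pi_{\mS}(v,v)=\Pi(v,v)\,\vec{\nu}$, so $|\Pi_{\mS}(v,v)|=|\Pi(v,v)|=\Pi(v,v)$ for all $v\in T\Sigma$, the last equality because $\Pi\ge0$ by hypothesis. In particular $\Pi(v,v)\ge|\Pi_{\mS}(v,v)|$ holds, with equality. Therefore the hypotheses of Theorem~\ref{thm-spherical-intro} are met ($\Ric\ge(n-1)g$ by assumption, the immersion $X\colon\Sigma\to\mS^n$ just constructed, and the second-fundamental-form comparison), and since $\Sigma$ is simply connected that theorem gives that $(\Omega,g)$ is isometric to a domain in $\mS^n_+$, which is the claim.

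The only step that requires genuine work is the construction of $X$, i.e.\ the application of the fundamental theorem of hypersurfaces: this is where simple connectivity of $\Sigma$ is used, and the role of the hypothesis that $g$ have constant sectional curvature $1$ at every point of $\Sigma$ is exactly to turn the Gauss--Codazzi identities of $\Sigma$ into the compatibility system for an isometric immersion into $\mS^n$. Everything after that is a direct appeal to Theorem~\ref{thm-spherical-intro}.
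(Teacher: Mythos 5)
Your construction of the immersion $X\colon\Sigma\to\mS^n$ via the pointwise space-form reduction of the Gauss--Codazzi equations and the fundamental theorem of hypersurfaces is correct, and it matches the first part of the paper's argument. The difficulty is in the final step. You invoke the ``moreover, if $\Sigma$ is simply connected\ldots'' clause of Theorem~\ref{thm-spherical-intro}, but within the paper's own logical structure that clause is \emph{not} proved independently: the paper states explicitly, immediately after the proof of Corollary~\ref{cor-simple-bdry-s}, that ``Theorem~\ref{thm-spherical-intro} now follows from Theorem~\ref{thm-spherical} and Corollary~\ref{cor-simple-bdry-s}.'' In other words, Theorem~\ref{thm-spherical} (the version actually proved in Section~\ref{rigidity}) concludes only that $(\Omega,g)$ is spherical; the passage from ``spherical'' to ``isometric to a domain in $\mS^n_+$'' when $\Sigma$ is simply connected is exactly the content of Corollary~\ref{cor-simple-bdry-s}. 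Your appeal is therefore circular.

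To close the gap you must do what the paper does after obtaining the immersion and applying Theorem~\ref{thm-spherical}: (i) use the hypothesis $\Pi\ge 0$ together with the theorem of do Carmo and Warner \cite{dCW70} to upgrade the immersion $\Phi\colon\Sigma\to\mS^n$ to an \emph{embedding} whose image is a convex hypersurface lying in a hemisphere, bounding a domain $D\subset\mS^n_+$; (ii) glue $\Omega$ and $\mS^n\setminus D$ along $\Sigma$ via $\Phi$, noting that since $\Sigma$ has the same first and second fundamental forms in $(\Omega,g)$ and in $\mS^n$, and both sides have constant sectional curvature $1$, the resulting metric is smooth across the seam; (iii) observe that the glued closed manifold is a spherical space form whose volume exceeds half that of $\mS^n$, hence is isometric to $\mS^n$, which forces $(\Omega,g)$ to be a domain in $\mS^n_+$. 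The immersion step in your write-up already supplies the input for (i), but (i)--(iii) are the substantive part of the proof and are missing entirely once the circular appeal is removed.
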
 
\begin{proof}
Let $ R^\Sigma (\cdot, \cdot, \cdot, \cdot)$,  $\nabla^\Sigma$ denote the curvature tensor,  the connection 
 on $\Sigma$ respectively. By  the Gauss equation and the Codazzi equation,  
\bee
\begin{split}
R^{\Sigma}\left(  v_1, v_2 , v_3, v_4 \right)  = & \ \la v_1, v_3 \ra \la v_2, v_4\ra - \la v_1, v_4 \ra \la v_2, v_3 \ra \\
& \ +\Pi\left(  v_1 , v_3\right)  
\Pi\left( v_2 , v_4 \right)  -\Pi\left(  v_1 , v_4 \right)  \Pi\left(  v_2, v_3 \right)  ,\\
 0 = & \ \left(  \nabla^\Sigma_{v_1}\Pi\right)  \left(  v_2, v_3\right)  -\left(  \nabla^\Sigma_{v_2}%
\Pi\right)  \left(  v_1, v_3 \right)   
\end{split} 
\eee
where  $v_1, \ldots v_4 \in T\Sigma$.
As    $\Sigma$ is simply connected,   the fundamental theorem of
hypersurfaces (cf. \cite{Sp}) implies  there exists an isometric immersion $ \Phi : \Sigma \rightarrow \mathbb{S}^{n}$ with $\Pi$ as its
second fundamental form.  Since $\Pi\geq0$, by a result of Do Carmo and Warner in \cite{dCW70}, 
  $ \Phi $ is  an embedding and $\Phi (\Sigma)$ is   a convex
hypersurface in a hemisphere $\mS^n_+$.
Now apply Theorem \ref{thm-spherical}, we conclude  that $(\Omega, g)$ has constant sectional curvature $1$ everywhere. 
Let $D$ be the region bounded by $ \Phi (\Sigma) $ in $\mS^n_+$. 
We glue $\Omega$ and 
$\mS^n \setminus D$ along the boundary via the isometric embedding $\Phi$ to get a closed  manifold  $(\widetilde{{M}}, \tilde{g})$. 
The fact that   $\Sigma$ has the same second fundamental form in $(\Omega, g)$ and $\mS^n$ 
and that  both $(\Omega, g)$ and $\mS^n$ have constant sectional curvature $1$ imply that 
$\tilde{g}$ is $C^2$ (indeed smooth) across $\Sigma$ in $\widetilde{{M}}$. Hence, $(\widetilde{M}, \tilde{g})$ is a spherical space form
whose volume is greater than half of $\mS^n$ (because it contains  a hemisphere). 
Therefore, $(\widetilde{M}, \tilde{g})$ is isometric to $\mS^n$ and  $(\Omega, g)$ is isometric to a domain in $\mS^n_+$. 
\end{proof}

\begin{remark}
One can also apply  Theorem \ref{thm-HangWang} in the above  proof.
\end{remark}

Theorem \ref{thm-spherical-intro} now follows from Theorem \ref{thm-spherical} and Corollary \ref{cor-simple-bdry-s}.
When  $\Ric\geq-\left(  n-1\right)  $, similarly we  have 

\begin{thm} \label{thm-hyperbolic}
Let $\left(  \Omega, g\right)  $ be an $n$-dimensional,  compact Riemannian manifold with
boundary $\Sigma$. Suppose

\begin{itemize}
\item $\Ric\geq-\left(  n-1\right)  g$

\item there is an isometric immersion $X:   \Sigma  \rightarrow\mathbb{H}^{m}$, 
where $\mathbb{H}^{m}$ is a hyperbolic space of dimension $m\geq n$

\item $ \Pi\left(  v,v\right)  \geq \left\vert \Pi_{\mathbb{H}}\left(v,v\right)  \right\vert $, for any $v\in T\Sigma$. 
Here $\Pi$ is the second fundamental form of $\Sigma$  in $(\Omega, g)$ and 
$\Pi_{\mathbb{H}}$ is the vector-valued, second fundamental form of  the immersion $X $. 
\end{itemize}
Then $\left( \Omega,g\right)  $ is hyperbolic, i.e. having constant sectional curvature $-1$. 
\end{thm}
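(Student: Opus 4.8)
The plan is to run the argument of Theorem~\ref{thm-spherical} with the round sphere replaced by the hyperboloid model of $\mathbb{H}^m$. Realize $\mathbb{H}^m = \{\, y \in \R^{m,1} : \langle y,y\rangle = -1,\ y_0 > 0\,\}$, with $\langle\cdot,\cdot\rangle$ the Lorentzian inner product, and for a point $\alpha \in \mathbb{H}^m$ let $F = F_\alpha$ be the restriction to $\mathbb{H}^m$ of the linear function $\langle\alpha,\cdot\rangle$, with $f = F\circ X$ on $\Sigma$. Since the hyperboloid is totally umbilic in $\R^{m,1}$ with the position vector as its (unit timelike) normal, one computes $\overline{\nabla}F(x) = \alpha + \langle\alpha,x\rangle x$ and, crucially, $\overline{\nabla}^2 F = F\, g_{\mathbb{H}}$ --- the sign opposite to \eqref{eq-hessian-F}. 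Restricting to $\Sigma$ and using $\langle\alpha,\alpha\rangle = -1$ then yields the hyperbolic analogues of Lemma~\ref{lem-identities}:
\begin{align*}
f^2 - |\nabs f|^2 - |\overline{\nabla}^\perp F|^2 &= 1, \\
\dels f - (n-1) f - \langle \vec{H}_{\mathbb{H}}, \overline{\nabla}^\perp F\rangle &= 0, \\
\langle \overline{\nabla}_{X_*(\nabs f)} \overline{\nabla}^\perp F,\, \vec{n}\rangle + \langle \Pi_{\mathbb{H}}(\nabs f, \nabs f),\, \vec{n}\rangle &= 0,
\end{align*}
for $\vec{n}$ normal to $X(\Sigma)$ in $\mathbb{H}^m$; note $\overline{\nabla}^\perp F$ lies in the spacelike normal bundle, so $|\overline{\nabla}^\perp F|^2 \ge 0$.

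Next I would solve $\Delta u = n u$ on $\Omega$ with $u = f$ on $\Sigma$ (uniquely solvable, with no eigenvalue obstruction --- in contrast to the spherical case, where one first needs $\lambda_1 > n$ via Reilly's theorem). Set $\phi = |\nabla u|^2 - u^2$. Bochner's formula together with $\Delta u = nu$ gives
\bee
\frac{1}{2}\Delta\phi = |\nabla^2 u - u g|^2 + (n-1)|\nabla u|^2 + \Ric(\nabla u, \nabla u) \ \ge\ |\nabla^2 u - u g|^2 \ \ge\ 0
\eee
by $\Ric \ge -(n-1)g$, so $\phi$ is subharmonic and harmonic only where $\nabla^2 u = u g$. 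On $\Sigma$ the first identity above gives $\phi|_\Sigma = \chi^2 - |\overline{\nabla}^\perp F|^2 - 1$ with $\chi = \partial u/\partial\nu$, and the same computation as in Lemma~\ref{lem-phi-nu} (the boundary terms come out formally identical) yields
\bee
\frac{1}{2}\frac{\partial\phi}{\partial\nu} = \langle \nabs f, \nabs \chi\rangle - \Pi(\nabs f, \nabs f) - H\chi^2 - \langle \vec{H}_{\mathbb{H}}, \overline{\nabla}^\perp F\rangle\, \chi.
\eee

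With these in hand I would run the maximum-principle argument of Theorem~\ref{thm-spherical} essentially verbatim. The hypothesis $\Pi(v,v) \ge |\Pi_{\mathbb{H}}(v,v)|$ forces $H \ge |\vec{H}_{\mathbb{H}}| \ge 0$ and $\Pi \ge 0$. Let $p \in \Sigma$ attain $\max_\Omega\phi$; choosing $\alpha = X(q)$ for an arbitrary $q \in \Sigma$ makes $\overline{\nabla}F(X(q)) = 0$, hence $\nabs f(q) = 0$, $\overline{\nabla}^\perp F(q) = 0$, $f(q) = -1$, so $\phi(q) = \chi^2(q) - 1$ and therefore $\chi^2(p) \ge |\overline{\nabla}^\perp F|^2(p)$. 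Treating the cases $\chi(p) \ne 0$ and $\chi(p) = 0$ as in \eqref{eq-sign-1}--\eqref{eq-equal-pi} (the second uses $\nabs f(\nabs f(\phi))(p) \le 0$ and the identity $\overline{\nabla}_{X_*(\nabs f)}\overline{\nabla}^\perp F(p) = -\Pi_{\mathbb{H}}(\nabs f, \nabs f)$), the second-fundamental-form comparison gives $\partial\phi/\partial\nu(p) \le 0$, so by the Hopf boundary point lemma $\phi$ is constant. Hence $\nabla^2 u = u g$ on $\Omega$ and $\chi^2 - |\overline{\nabla}^\perp F|^2 \equiv c$ for a constant $c \ge 0$. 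Here the hyperbolic case departs from the spherical one: if $c > 0$ then, exactly as in the proof of Theorem~\ref{thm-spherical}, $H = |\vec{H}_{\mathbb{H}}| = 0$ and $\Pi = 0$, so $X$ is totally geodesic and maps $\Sigma$ into a totally geodesic $\mathbb{H}^{n-1} \subset \mathbb{H}^m$; but then $X$ is a local isometry from the compact $\Sigma$ onto the noncompact $\mathbb{H}^{n-1}$, which is impossible. So $c = 0$, $\phi \equiv -1$, and at the base point $\nabla u(q) = 0$, $u(q) = -1$.

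Finally, for $x$ in the interior of $\Omega$ I would pick a nearest boundary point $q_x$ and apply the above with $\alpha = X(q_x)$: then $v := -u$ satisfies $\nabla^2 v = v g$, $v(q_x) = 1$, $\nabla v(q_x) = 0$. The standard warped-product / Obata-type rigidity (cf.~\cite{HangWang09}) then gives $\exp_{q_x}^* g = dr^2 + (\sinh r)^2\, g_{_{\mathbb{S}^{n-1}}}$ in geodesic polar coordinates about $q_x$ up to the cut distance, so $g$ has constant sectional curvature $-1$ along the minimizing geodesic from $q_x$ to $x$, hence at $x$; by continuity this holds on all of $\Omega$, proving it is hyperbolic. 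The only points requiring genuine care, rather than transcription from the spherical argument, are getting the signs right in the Lorentzian computation of $\overline{\nabla}^2 F$ and its consequences, and verifying that the alternative $c > 0$ --- which in the spherical case produced the hemisphere --- is vacuous here precisely because $\Sigma$ is compact.
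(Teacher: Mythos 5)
Your proof is correct and follows essentially the same route as the paper: one transposes the maximum-principle argument of Theorem \ref{thm-spherical} to the hyperboloid model of $\mathbb{H}^m$, using $F_\alpha = \langle\alpha,\cdot\rangle$ with $\overline{\nabla}^2 F = F\, g_{\mathbb{H}}$, $\phi = |\nabla u|^2 - u^2$, and the same boundary normal-derivative analysis at a maximum point, and then rules out the alternative $c>0$ by observing that it forces $\Pi_{\mathbb{H}}\equiv 0$, which is impossible since $\mathbb{H}^m$ admits no compact totally geodesic submanifold. The extra details you supply (the explicit hyperbolic analogues of Lemma \ref{lem-identities}, the unconditional solvability of $\Delta u = nu$ because $-\Delta + n$ is coercive, and the sign bookkeeping) are all accurate and consistent with the paper's brief sketch.
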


The proof is parallel to that of Theorem \ref{thm-spherical}.  
Let $\la \cdot, \cdot \ra$ denote the dot  product on 
$\mathbb{R}^{m,1}$ and $\onab $ be the connection on $\mathbb{H}^m$.  Identify 
$\mathbb{H}^{m}$ with $\left\{  x\in\mathbb{R}^{m,1} \ | \ \left\langle 
x,x  \right\rangle =-1,x_{0}>0\right\}  $. 
For  any $\alpha \in X(\Sigma) \subset \mathbb{H}^{m}$, consider $F\left(  x\right)  =F_{\alpha}\left(
x\right)  =  \left\langle \alpha,x\right\rangle  $ on
$\mathbb{H}^{m}$. Its gradient is 
$
\overline{\nabla}F\left(  x\right)  =\alpha+\left\langle 
\alpha,x\right\rangle  x.
$
Thus $\left\vert \overline{\nabla}F\left(  x\right)  \right\vert
^{2} =-1+F^{2}$.
Given any $ p \in \Sigma$, let $\overline{\nabla}^{\perp}F\circ X\left(  p\right)  $ be the
component of $\onab F \circ X (p)$ orthogonal to $X_\ast\left(  T_{p}\Sigma\right)  $. On $\Sigma$,  define 
$f=F\circ X$.  Let $u $ be the smooth solution to
\[
\left\{
\begin{array}[c]{ccc}
\Delta u=nu & \text{on} & \Omega\\
u=f & \text{at} & \Sigma
\end{array}
\right.
\]
and define $\chi = \frac{\p u}{\p \nu}$.
Then $\phi:=\left\vert \nabla u\right\vert ^{2}-u^{2}$ is subharmonic as seen
from 
\[
\frac{1}{2}\Delta\phi=\left\vert \nabla^{2}u-ug\right\vert ^{2}+\mathrm{Ric}(\nabla
u,\nabla u)+\left(  n-1\right)  \left\vert \nabla u\right\vert ^{2} .
\] 
Similar to \eqref{eq-phi-sigma}, 
we have%
\begin{align*}
\phi|_{\Sigma}  & =\left\vert \nabs f\right\vert ^{2}+\chi^{2}%
-f^{2}=-1+\chi^{2}-\left\vert \onab^{\perp}F\right\vert ^{2} .
\end{align*}
By analyzing the normal derivative $ \frac{\partial\phi}{\partial\nu}$ in the same way as in the proof of Theorem \ref{thm-spherical}, 
we conclude by the strong maximum
principle that $\phi$ is constant.  Therefore,  $\nabla^{2}u=ug$ and $\chi^{2}-\left\vert \overline{\nabla}^{\perp
}F\right\vert ^{2}$ is a nonnegative constant $c$ along $\Sigma$. If $c>0$, 
it implies  $\Pi_{\mathbb{H}}=0$, which contradicts  the fact  $\mathbb{H}^{m}$ does not contain a
closed totally geodesic submanifold. Therefore $\chi^{2}-\left\vert
\overline{\nabla}^{\perp}F\right\vert ^{2}=0$ at $\Sigma$, which shows   $\phi=-1$ on $\Omega$.
By the same argument as that of Theorem \ref{thm-spherical}, we conclude that $(\Omega , g)$
has constant sectional curvature $-1$. 

\begin{cor} \label{cor-simple-bdry-H}
 Let $\left( \Omega ,g\right)  $  be an $n$-dimensional,  compact Riemannian manifold
with boundary $\Sigma $. Suppose  
\begin{itemize}
\item  $\Ric \ge - (n-1) g $ 
\item  $g$ has constant sectional curvature $-1$ at every point on $\Sigma$. 
\end{itemize}
If $\Sigma$ is simply connected with nonnegative second fundamental form $\Pi$,
 then $(\Omega, g)$ is isometric to a domain in $\mathbb{H}^n$. 
\end{cor}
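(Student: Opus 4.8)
The strategy is to mirror the proof of Corollary \ref{cor-simple-bdry-s}, replacing the sphere by hyperbolic space throughout. First I would reconstruct an isometric immersion of $\Sigma$ into $\mathbb{H}^n$. Since $g$ has constant sectional curvature $-1$ at every point of $\Sigma$, the Gauss equation reads
\[
R^{\Sigma}(v_1,v_2,v_3,v_4) = -\la v_1,v_3\ra\la v_2,v_4\ra + \la v_1,v_4\ra\la v_2,v_3\ra + \Pi(v_1,v_3)\Pi(v_2,v_4) - \Pi(v_1,v_4)\Pi(v_2,v_3),
\]
and the Codazzi equation gives $(\nabla^\Sigma_{v_1}\Pi)(v_2,v_3) = (\nabla^\Sigma_{v_2}\Pi)(v_1,v_3)$; these are exactly the integrability conditions for a hypersurface in $\mathbb{H}^n$ with second fundamental form $\Pi$. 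As $\Sigma$ is simply connected, the fundamental theorem of hypersurfaces yields an isometric immersion $\Phi:\Sigma\rightarrow\mathbb{H}^n$ having $\Pi$ as its second fundamental form. Because $\Pi\ge 0$, the hyperbolic analogue of the Do Carmo--Warner theorem (such immersions of compact hypersurfaces with nonnegative second fundamental form are embeddings bounding a convex body) shows $\Phi$ is an embedding onto a convex hypersurface in $\mathbb{H}^n$.

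Next I would invoke Theorem \ref{thm-hyperbolic}. Viewing $\Phi$ as an isometric immersion into $\mathbb{H}^n$ (so $m=n$ here), the second fundamental form $\Pi_{\mathbb{H}}$ of $\Phi$ is precisely $\Pi$, and in particular $\Pi(v,v) = |\Pi_{\mathbb{H}}(v,v)|$ for all $v\in T\Sigma$; together with $\Ric \ge -(n-1)g$ this verifies all hypotheses of Theorem \ref{thm-hyperbolic}. We conclude that $(\Omega,g)$ has constant sectional curvature $-1$ everywhere, i.e.\ $(\Omega,g)$ is hyperbolic.

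Finally I would upgrade ``hyperbolic'' to ``isometric to a domain in $\mathbb{H}^n$'' by a gluing argument. Let $D$ be the convex region bounded by $\Phi(\Sigma)$ in $\mathbb{H}^n$, and glue $(\Omega,g)$ to $\mathbb{H}^n\setminus D$ along $\Sigma$ via $\Phi$. Since $\Sigma$ has the same second fundamental form $\Pi$ on both sides and both sides have constant curvature $-1$, the glued metric is $C^2$ (hence smooth) across $\Sigma$, producing a smooth complete hyperbolic manifold $(\widetilde M,\tilde g)$. A complete simply connected hyperbolic manifold is isometric to $\mathbb{H}^n$; since $\widetilde M$ contains $\mathbb{H}^n\setminus D$ it is noncompact, and the standard developing-map argument identifies $(\widetilde M,\tilde g)$ with $\mathbb{H}^n$ (after passing to the universal cover, which here is $\widetilde M$ itself because $\Sigma$, hence $\Omega$, is simply connected). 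Therefore $(\Omega,g)$ embeds isometrically onto $D\subset\mathbb{H}^n$.

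The main obstacle is the last step: unlike the spherical case, where a volume comparison forces the closed glued space form to be $\mathbb{S}^n$, here $\widetilde M$ is noncompact, so one must argue via completeness and simple connectivity (Cartan--Hadamard type reasoning) rather than a counting argument, and one must take some care that the gluing is genuinely smooth and that $\Phi(\Sigma)$ bounds a well-defined domain $D$. The embedding claim relies on $\Phi$ being an embedding, which is why the Do Carmo--Warner type input on convex hypersurfaces is essential; establishing or citing that hyperbolic analogue is the other delicate point.
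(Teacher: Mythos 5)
Your proof matches the paper's line by line through the gluing of $(\Omega,g)$ with $\mathbb{H}^n\setminus D$: the Gauss--Codazzi/fundamental-theorem step, the Do Carmo--Warner input (the paper cites the remark in Section~5 of \cite{dCW70} for the hyperbolic case), the application of Theorem~\ref{thm-hyperbolic}, and the observation that the glued metric $\tilde g$ is smooth across $\Sigma$. You correctly identify the final step --- upgrading ``complete hyperbolic manifold $\widetilde M$'' to ``$\widetilde M \cong \mathbb{H}^n$'' --- as the delicate point, but your resolution of it contains a genuine gap.

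You assert that $\widetilde M$ is simply connected ``because $\Sigma$, hence $\Omega$, is simply connected.'' The implication ``$\Sigma$ simply connected $\Rightarrow \Omega$ simply connected'' is not valid: a compact manifold with boundary can have nontrivial $\pi_1$ while its boundary is a sphere (e.g.\ any closed $n$-manifold minus a ball). Van Kampen applied to $\widetilde M = \Omega \cup_\Sigma (\mathbb{H}^n\setminus D)$ with $\Sigma$ simply connected gives $\pi_1(\widetilde M)\cong \pi_1(\Omega)$, so simple connectivity of $\widetilde M$ is exactly as hard as simple connectivity of $\Omega$, which you have not established. Without that, $\widetilde M$ is only a quotient $\mathbb{H}^n/\Gamma$, and you still need to show $\Gamma$ is trivial --- which is the whole content of the step, not a formality. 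The paper does not attempt a topological argument at all; instead it invokes Proposition~\ref{H^nK}: a complete manifold with $\Ric\ge -(n-1)g$ that is isometric to $\mathbb{H}^n\setminus B$ outside a compact set is isometric to $\mathbb{H}^n$. That proposition is proved by a Busemann-function comparison argument in the spirit of Cai--Galloway, and it is the right tool precisely because (unlike the spherical case handled by volume comparison in Corollary~\ref{cor-simple-bdry-s}) no soft topological shortcut is available here. To repair your proof, replace the simple-connectivity claim with an appeal to Proposition~\ref{H^nK}, or give an independent argument (e.g.\ via the ideal boundary / limit set of $\Gamma$) that the deck group of $\widetilde M$ is trivial.
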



The proof is similar to that of Corollary \ref{cor-simple-bdry-s}.
Since $g$ has constant sectional curvature $-1$ along $\Sigma$ and 
 $\Sigma$ is simply connected,  by the Gauss and Codazzi equations and the fundamental theorem of
hypersurfaces (cf. \cite{Sp}), 
there exists an isometric immersion $ \Phi : \Sigma \rightarrow \mathbb{H}^{n}$ with $\Pi$ as its second fundamental form.  
Since $\Pi\geq0$, by the remark in Section 5 of Do Carmo and Warner \cite{dCW70}, $ \Phi $ is  an embedding and $\Phi (\Sigma)$ is   a convex
hypersurface in $\mathbb{H}^n$.
Apply Theorem \ref{thm-hyperbolic} to $(\Omega, g)$ and the embedding $\Phi$, we conclude that 
$g$ has constant sectional curvature $-1$ everywhere on $\Omega$. 
Now let $D$ be the region bounded by $ \Phi (\Sigma) $ in $\mathbb{H}^n$. We glue $\Omega$ and 
$\mathbb{H}^n \setminus D$ along the boundary via the isometric embedding $\Phi$ to get a complete  manifold  $(\widetilde{{M}}, \tilde{g})$.
The fact    $\Sigma$ has the same second fundamental form in $(\Omega, g)$ and $\mathbb{H}^n$ 
and  both $(\Omega, g)$ and $\mathbb{H}^n$ have constant sectional curvature $-1$ imply that 
$\tilde{g}$ is smooth across $\Sigma$ in $\widetilde{{M}}$. Hence, $(\widetilde{M}, \tilde{g})$ is a complete, hyperbolic  manifold 
which, outside a compact set,  is isometric to $\mathbb{H}^n$ minus a ball.
We conclude that $ (\widetilde{M}, \tilde{g})$ is isometric to $\mathbb{H}^n$ 
and  $(\Omega, g)$ is isometric to a domain in $\mathbb{H}^n$.  
This final claim can be seen, for instance, by the following:

\begin{prop}\label{H^nK}
Let $(M,g)$ be a complete, $n$-dimensional Riemannian manifold with  $\Ric\geq-\left(  n-1\right) g $. Suppose that there exists a compact set $K\subset M$ s.t. $M \setminus K$ is isometric to $\mathbb{H}^n \setminus B$
where $B$ is a geometric ball.
Then $M$ is isometric to $\mathbb{H}^n$.
\end{prop}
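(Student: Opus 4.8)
The plan is to reduce the Proposition to one of the rigidity results already proved in this paper --- Theorem \ref{thm-hyperbolic} --- applied to the compact piece $K$ itself, and then to pass from ``$K$ is hyperbolic'' to ``$M=\mathbb{H}^n$'' by a short covering-space argument. First I would record the geometry of $K$ near its boundary. Write the given isometry as $\Phi\colon M\setminus K\to\mathbb{H}^n\setminus B$ with $B=B_R(q)$ a geodesic ball of radius $R$; since $M$ is connected with a single end, $K$ is connected and $\Phi$ identifies $\partial K$ with the geodesic sphere $S_R(q)$. Because the Levi-Civita connection and the curvature tensor of a smooth metric are determined pointwise by the metric and its derivatives, all data along $\partial K$ may be read off from the exterior side: the second fundamental form of $\partial K$ in $(K,g)$ with respect to the normal pointing out of $K$ equals $(\coth R)\,\gamma$ (where $\gamma$ is the induced metric), and the sectional curvature of $g$ equals $-1$ at every point of $\partial K$, not merely on the open set $M\setminus K$. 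Hence $(K,g)$ is a compact manifold with boundary with $\Ric\ge-(n-1)g$, whose boundary $\partial K\cong S_R(q)$ carries the isometric immersion $X\colon\partial K\hookrightarrow\mathbb{H}^n$ given by the geodesic-sphere inclusion, for which $\Pi_{\partial K}(v,v)=(\coth R)|v|^2=|\Pi_{\mathbb{H}}(v,v)|$ for all $v\in T\partial K$ (the normal bundle of $S_R(q)$ in $\mathbb{H}^n$ is a line, so $\Pi_{\mathbb{H}}$ is essentially scalar, equal to $(\coth R)\gamma$ up to sign).

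Thus $(K,g)$, together with the immersion $X$, satisfies the hypotheses of Theorem \ref{thm-hyperbolic} --- the required inequality $\Pi_{\partial K}(v,v)\ge|\Pi_{\mathbb{H}}(v,v)|$ holds, with equality --- and so Theorem \ref{thm-hyperbolic} yields that $(K,g)$ has constant sectional curvature $-1$. Since $g$ already has constant curvature $-1$ on $M\setminus K$, continuity of the curvature tensor across $\partial K$ gives that $(M,g)$ has constant sectional curvature $-1$ everywhere; being complete, $M$ is a hyperbolic manifold, $M=\mathbb{H}^n/\Gamma$ for a discrete group $\Gamma<\operatorname{Isom}(\mathbb{H}^n)$ acting freely.

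To finish I would identify $\Gamma$. The region $U:=M\setminus K$ is isometric, via $\Phi$, to $\mathbb{H}^n\setminus\overline B$; in the universal cover $\mathbb{H}^n\to M$ the preimage of $U$ is a $\Gamma$-invariant union of isometric copies of $\mathbb{H}^n\setminus\overline B$. Any two such copies are complements of geodesic balls, and two complements of balls in $\mathbb{H}^n$ cannot be disjoint (two balls cannot cover $\mathbb{H}^n$); so the preimage is a single copy, which each $\gamma\in\Gamma$ must preserve. An isometry of $\mathbb{H}^n$ preserving $\mathbb{H}^n\setminus\overline B$ preserves $\overline B$, hence fixes its center; since $\Gamma$ acts freely, $\gamma=1$. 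Therefore $\Gamma$ is trivial and $M$ is isometric to $\mathbb{H}^n$.

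The one genuinely essential step is the reduction in the first paragraph: recognizing $\partial K$ as a hyperbolic geodesic sphere and noticing that the metric --- hence its curvature and the second fundamental form of $\partial K$ --- is already pinned down along $\partial K$ from the exterior side by smoothness, so that the paper's own Theorem \ref{thm-hyperbolic} applies verbatim to $(K,g)$; everything else is bookkeeping together with the covering-space observation above. A self-contained variant would instead re-run the Bochner/Hopf maximum-principle argument of Section \ref{rigidity} directly on $K$, solving $\Delta u=nu$ on $K$ with $u|_{\partial K}=\langle X(p_1),X(\cdot)\rangle$ for a fixed $p_1\in\partial K$ (this choice forces $\max_K(|\nabla u|^2-u^2)\ge-1$, the normalization needed to start the maximum principle on the umbilic boundary $\partial K$), but this merely reproves Theorem \ref{thm-hyperbolic} in the present situation; a Bishop--Gromov volume-comparison proof is also conceivable but requires more care near $K$.
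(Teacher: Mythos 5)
Your proof is correct in substance but takes a genuinely different route from the paper's. The paper argues directly via Busemann functions in the style of Cai--Galloway (and the Cheeger--Gromoll splitting theorem): using the upper half-space model it builds a generalized Busemann function $\beta$ from the constant-mean-curvature horospheres $S_k=\{x_n=\epsilon_k\}$ in the hyperbolic end, pairs it with the ordinary Busemann function $b$ of a ray, shows $\Delta(b+\beta)\ge 0$ in the support sense while $b+\beta\le 0$ with equality along the ray, concludes $b+\beta\equiv 0$ by the strong maximum principle, and then reads off $\nabla^2\beta=g-d\beta\otimes d\beta$, hence the warped-product structure $dt^2+e^{2t}h$ with $(S,h)$ flat and ultimately standard $\R^{n-1}$. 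You instead recycle the paper's own compact rigidity result: cut $M$ along a geodesic sphere $S_{R'}(q)$ inside the hyperbolic end to obtain a compact manifold-with-boundary $\Omega$ whose boundary is umbilic with $\Pi=(\coth R')\gamma=|\Pi_{\mathbb{H}}|$, invoke Theorem \ref{thm-hyperbolic} (with $m=n$) to conclude $\Omega$ --- and hence all of $M$ --- is hyperbolic, and then kill the holonomy via the observation that the complements of two disjoint geodesic balls cannot cover $\mathbb{H}^n$. This is non-circular, since Theorem \ref{thm-hyperbolic} is proved before and independently of this proposition; your argument is shorter and reuses the paper's machinery, while the paper's is self-contained and exhibits the warped-product geometry of the end explicitly. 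Two small points you should tighten: $K$ is only assumed to be a compact set, not a manifold with boundary, so one should pass to $\Omega=M\setminus\Phi^{-1}\big(\mathbb{H}^n\setminus\overline{B}_{R'}(q)\big)$ for some $R'>R$ rather than applying the theorem ``verbatim to $(K,g)$''; and the step identifying each component of the preimage of $M\setminus K$ in the universal cover with a full copy of $\mathbb{H}^n\setminus\overline B$ uses that $\mathbb{H}^n\setminus\overline B$ is simply connected, hence needs $n\ge 3$ (the case $n=2$ would require a separate argument).
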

The Euclidean version of the result is well known (e.g. it appears as an exercise in \cite{P06} several times) . 
The hyperbolic case can be proved by similar methods. For lack of an exact reference, we outline a proof using Busemann functions.
The main idea comes from Cai-Galloway \cite{CG00}.
We use the upper space model $\mathbb{H}^n=\{x\in\R^n:x_n>0\}$. Without loss of generality we take $o=(0,\ldots,0,1)$. For a sequence $\e_k\rightarrow 0$, let $S_k\subset M$ be 
the hypersurface corresponding to $x_n=\e_k$ and $q_k$ the point corresponding to $(0,\ldots,0,1/\e_k)$. Let $p_k$ be the point on $S_k$ closest to $q_k$ and
$\gamma_k:[-a_k,b_k]\rightarrow M$ be a minimizing geodesic from $p_k$ to $q_k$ s.t. $\gamma_k(0)\in K$ (it is easy to see that any minimizing geodesics from $p_k$ to $q_k$ must intersect $K$).  Passing to a subsequence $\e_k\rightarrow 0$ if necessary, we can assume that $\gamma_k(0)\rightarrow \bar{o}$ and $\gamma_k$ converges to a geodesic line $\gamma:\R\rightarrow M$.
We consider the following generalized Busemann function
$$\beta(x)=\lim_{k\rightarrow \infty}d(\bar{o},S_k)-d(x,S_k).$$
Then we have 

{\bf Claim:} $\Delta \beta\geq n$ in the support sense.

The crucial fact is that $S_k$ has constant mean curvature $H=n-1$. The argument is the same as in Cai-Galloway \cite{CG00}.

We also have the standard Busemann function $b$ associated with the ray $\gamma|_{[0,\infty)}$ defined by
$b(x)=\lim_{k\rightarrow \infty}s-d(x,\gamma(s))$. It is known that $\Delta b\geq -n$ in the support sense. 
The rest of the proof is similar to Cai-Galloway \cite{CG00} or the proof of the Cheeger-Gromoll splitting 
theorem (cf. e.g. \cite{P06}). We have $\Delta(b+\beta)\geq 0$. By the triangle inequality one can show 
$b+\beta\leq 0$. On the other hand $b+\beta=0$ along $\gamma$. Therefore by the strong maximum principle $b+\beta=0$.
Then $\beta=-b$ and it is a smooth function with $|\nabla \beta|=1$. By the Bochner formula one can show that $\nabla^2\beta=g-d\beta\otimes d\beta$.
From this identity one can show that $M$ is isometric to the warped product $(\R\times S^{n-1}, dt^2+e^{2t} h)$, where $(S,h)$ is a flat Riemannian manifold. 
It is then clear that $(S,h)$ must be the standard $\R^{n-1}$. This finishes the proof of Proposition \ref{H^nK}.

\end{document}